\documentclass{article}
\usepackage[a4paper, left=2.5cm, right=3.5cm, top=30mm, bottom=35mm]{geometry}
\usepackage{blindtext}
\usepackage{amssymb}
\usepackage{amsfonts}
\usepackage{eurosym}
\usepackage{color}

\newtheorem{theorem}{Theorem}

\newtheorem{proposition}[theorem]{Proposition}
\newtheorem{remark}[theorem]{Remark}

\newenvironment{proof}[1][Proof]{\noindent\textbf{#1.} }{\ \rule{0.5em}{0.5em}}
\newcommand{\dprod}{\displaystyle\prod}

\title{On the onset of correlations in Wave Turbulence close to
singularities.}
\begin{document}
\maketitle
%\begin{center}
%{\Large On the onset of correlations in Wave Turbulence close to
%singularities.}

\begin{center}{M. Escobedo , J. J. L. Vel\'azquez }

\bigskip
\end{center}

\textbf{Abstract} 
In this paper we describe in a formal way how the derivation of the turbulent wave equation for the Schr\"odinger equation breaks down for times close to the self similar blow up of the wave turbulence kinetic equation. To this end, we study how the  derivation of the cumulants  hierarchy can not be approximated using solutions of the wave turbulence  kinetic  equation near the blow up time. It tuns out that near the blow up time the kinetic equation has to be replaced by a hierarchy of equations which is equivalent to a random field, defined for times $t\in (-\infty, \infty)$ and satisfying a nonlinear
 non autonomous  Schr\"odinger equation.

\section{Introduction}
\setcounter{equation}{0}
\setcounter{theorem}{0}

The goal of this paper is to describe the mechanism in which the derivation
of the Wave Turbulence equation associated to the nonlinear Schr\"{o}dinger
equation (WT) breaks down for times near to the blow-up time of the
solutions of the WT equation.

It was established long time ago in the physical literature that the
solutions of a large class of wave equations with weak non-linearities and
random initial data can be approximated using some classes of kinetic
equations. The earliest example of the application of the ideas of Wave
Turbulence is due to Peierls (cf. \cite{Pe1}). A general formalism that
allows to derive kinetic equations for a general class of quantum systems
with random initial data was develop in \cite{BrPr}. A kinetic theory
describing Wave Turbulence for water waves was developed by Hasselmann (cf. 
\cite{Hass1}, \cite{Hass2}). Further developments of the theory of Wave
Turbulence for water waves can be found in \cite{BN}, \cite{BS}, \cite{Z2a}.
A large class of methods were developed by Zakharov and collaborators in
order to derive Wave Turbulence theories for several physical systems (cf. 
\cite{Z2}, \cite{Z2b}, \cite{Z3} as well as in the book \cite{Zbook} and
references therein). The ideas and methods of Wave Turbulence theory had
been extensively applied in plasma physics (\cite{Ved}, \cite{ZMR}, \cite{Zas}). A
detailed list of references concerning Wave Turbulence theory can be found
in the books \cite{Zbook} and \cite{N}.

\bigskip

In the first part of this paper we will revisit the derivation of the
kinetic equation associated to the theory of Wave Turbulence for the
nonlinear Schr\"{o}dinger equation. This particular wave equation has been
often used to illustrate the methods and ideas of Wave Turbulence in a
setting in which the PDE under consideration is relatively simple. A
detailed description of the problem under consideration will be given in
Section \ref{Correlations}. Roughly speaking we will consider the classical,
defocusing cubic Schr\"{o}dinger equation with random initial data $u_{0}.$
These initial data will be selected as some families of Gaussian random
variables which are uniquely characterized by the correlation function. We
will restrict our analysis to the case in which the probability distribution
describing the choice of initial data is invariant under spatial
translations. The corresponding solution of the nonlinear Schr\"{o}dinger
defines a time dependent random field $u\left( \cdot ,t\right) .$

Rigorous mathematical results proving that this approximation is valid for a
suitable scaling limit of the nonlinear Schr\"{o}dinger equation and
suitable time scales have been recently obtained in \cite{DeHa}. Earlier
results in \cite{LuSp} provided a rigorous derivation of a linearized Wave
Turbulence equation in the kinetic time scale near the equilibrium
distribution. For technical reasons the problem considered in \cite{LuSp}
replaces the Laplacian in the Nonlinear Schr\"{o}dinger equation by a
discretized version of it.

\bigskip

In this paper we will use only formal, non-rigorous arguments. As a first
step we will revisit the derivation of the WT equation for the nonlinear Schr%
\"{o}dinger equation using the method of cumulants. Cumulants have been
extensively used in the non-rigorous derivations of \ the WT equation. The
cumulants can be thought as some kind of generalization of the classical
hierarchies of equations that are often used in kinetic theory. It turns out
that it is possible to write an infinite hierarchy of equations for the
cumulants in which the evolution of each cumulant is linked to higher order
cumulants in a way analogous to the BBGKY hierarchy for, say, the Boltzmann
equation. Using the smallness of the nonlinear interactions in the Schr\"{o}%
dinger equation it is possible to derive a perturbative series for the
cumulants, that, in particular, provides a closure mechanism for the
hierarchy of cumulant equations and it allows to derive the standard WT
kinetic equations that describes the evolution of the Fourier transform of
the correlation function that characterizes the random field $u\left( \cdot
,t\right) .$ This approach has been used to derive the kinetic WT equation
in \cite{DNPZ}, \cite{DPR}.

\bigskip

It is worth to mention that the rigorous derivation of the kinetic WT
equation in \cite{DeHa} is not based in the use of cumulants. Instead, the
approach used in \cite{DeHa} is based in rewriting the nonlinear Schr\"{o}%
dinger equation as an integral equation by means of the Duhamel formula.
Iterating that equation it is possible to obtain a power series in terms of
the small parameter $\varepsilon $ that measures the strength of the
non-linearities. An extremely involved analysis that requires a detailed
study of the combinatorics of the terms in the resulting series allows to
prove that the correlation function associated to the random field $u\left(
\cdot ,t\right) $ solves the kinetic WT equation. Derivations of Wave
Turbulence theories (in general for wave equations different from the
nonlinear Schr\"{o}dinger equation) based in cumulants can be found in \cite{New1}, \cite{New3}, \cite%
{NR} while the Duhamel approach has been used in 
\cite{BrPr} and in the rigorous approaches developed in \cite{DeHa}, \cite%
{LuSp}.

\bigskip

From the mathematical point of view the kinetic equations arising in Wave
Turbulence theory, and in particular the one associated to the cubic nonlinear Schr\"{o}dinger equation have many analogies with some kinetic equations that
appear in the description of some classes of quantum gases. Specifically,
the kinetic equation which describes the behavior of the distribution of
velocities for a rarefied gas of bosons is a kinetic equation containing
some quadratic terms and some cubic terms that are the same that appear in
the theory of Wave Turbulence for the nonlinear Schr\"{o}dinger equation.
This equation, usually termed as the Nordheim equation was first derived in 
\cite{Nord} using physical arguments analogous to the ones used in the
derivation of the classical Boltzmann equation, but replacing the classical
statistical arguments used in Boltzmann by the statistics of a set of
bosons. As indicated before the cubic terms arising in the Nordheim equation
are similar to the ones contained in the WT kinetic equation, while the
quadratic terms in the Nordheim equation are identical to those appearing in
the classical Boltzmann equation. Due to this, Nordheim equation provides
some kind of interpolation between the classical Boltzmann equation and the
kinetic behaviour of a system of weakly interacting waves, which is
described by Wave Turbulence theory.

There are some partial results concerning concerning the derivation of
Nordheim equation taking as starting point the dynamics of a system of many
quantum particles (cf. \cite{CPulv}).

\bigskip

It is known that the solutions of the WT equation for the cubic Schr\"{o}%
dinger equation blow-up in finite time for a large class of bounded (and
smooth) initial data. The possible existence of blow-up phenomena for
kinetic equations related to WT was first addressed in \cite{LY1}, \cite{LY2}%
, \cite{LY3},  and then in \cite{KSS},  \cite{S}, \cite{SK1, SK2},   \cite{JPR, LLPR}). 
It was also argued that isotropic solutions of the WT equation develop a Dirac mass at the origin after the blow up by means of an additional self similar solution. A set of equations describing a self similar formation of a Dirac mass in finite time can be found in [32], [33].
Numerical simulations performed in [32], [33], [17], [19] and more recently in [34] strongly suggest the existence of a stable self similar  blow up mechanism. A rigorous proof of the onset of blow up for the WT and Nordheim equations, as well as the formation of Dirac masses in finite time for isotropic solutions of WR as well as for the Nordheim equation, for a large class of initial data was rigorously proved in \cite{EV1, EV3}. 

A natural question is to determine if the kinetic equation remains a valid approximation for the random field $u$ for times $t$ close  to the blow up time for the kinetic WT equation. The problem of the validity of the  wave turbulence theory was   present from the beginning in the literature of the field,  \cite{{Hass1}}, \cite{BS}, Benney-Newell, \cite{New2}. It was in particular considered in  detail in \cite{New3}  where  a general criteria for the loss of validity of wave turbulence deduction was given and checked (p. 242, towards the end of p. 261, p. 262). 

 An issue related with the above concerns the  Bose Einstein condensation phenomena in the context of quantum gases of bosons as described in great generality in the literature of physics (see the tutorial \cite{Pr}). It is seems now well accepted (cf. \cite{Stoof2, Stoof3}, \cite{SK1}) that the Bose condensation is a three stages process, where the first and third are 
kinetic regimes but not the second. Kinetic descriptions would then only valid for times sufficiently separated from the actual nucleation of the condensate. For a  discussion about the end of validity of the kinetic description of the gas and the description of the formation and growth of the condensate in \cite{KSS}, \cite{{Stoof1, Stoof2, Stoof3}}, \cite{GZ} and the review \cite{DWGGP}.
That question simplifies for  dilute, spatially  homogeneous and isotropic gases,
in the case of  large occupation  numbers, weak interactions as  considered in  \cite{S}, \cite{SK1, SK2},   \cite{JPR, LLPR}. In that context, the problem may be treated in terms of a blow up of solutions of the Nordheim and formation of Dirac's delta.
An approach closely related to the one developed in this paper has been used
in \cite{EV0} in order to study the breakdown of the derivation of the
Nordheim equation for bosons taking as starting point a hamiltonian system
of interacting quantum particles. In that case, the kinetic equation can be
obtained using a hierarchy of Wigner functions assuming that the
interactions between the particles are weak enough. 
The breakdown of the  kinetic regime  happens when the variations of the solutions
of the  Nordheim equation  are large, as it happens for solutions that blow up in finite time, whose existence is proved in \cite{EV1}.

It may then happen both in the WT theory and the  BE condensation (in the simplified situation)  that,  if we assume that the blow-up time
takes place at $t=0$ (something that it is always possible using a
translation in time), the kinetic regime would provide a good approximation
for the solutions of the gas only if $\left\vert t\right\vert $ is
sufficiently large (in suitable time units). The transition between both
regimes should be given by the hierarchy of  equations for cumulants (WT equation), or Wigner functions (Nordheim's equation)  mentioned above.
\\\\

\noindent
{\bf {\Large Main Results}}\\\\
In this paper we obtain two main results. The first one is   that the formal derivation of the
WT kinetic equation through the cumulants equations breaks down near the blow-up. More precisely, we will
show that, assuming that the blow-up for the WT kinetic equation takes place
in the self-similar manner numerically observed in \cite{SK1}, \cite{SK2}, 
\cite{JPR}, \cite{LLPR}, \cite{SGMN}, the main assumptions in which the
derivation of the kinetic equation is based taking as starting point the
hierarchy of cumulants, cease being valid. The two main assumptions that are
made in the derivation of the kinetic WT equation are the smallness of the
correlation functions as well as the fact that the variations of the
solutions of the kinetic equation are sufficiently slow to ensure the
validity of a Markovian approximation for the solutions of the hierarchy of
cumulant equations. It turns out that both approximations fail for times
sufficiently close to the blow-up time, and therefore the kinetic WT
equation cannot be used anymore in order to describe the form of the random
field $u\left( \cdot ,t\right) $ for times sufficiently close to the blow-up
time. 

In a second result we will see that for the range of times near the blow up indicated above, the description of the
random field $u\left( \cdot ,t\right) $ must be made by means of hierarchy
of cumulants that, differently from the original set of cumulant equations,
does not contain any small parameter. The hierarchy  that we have obtained must be solved with an
initial condition for very negative times, which allows to match the
self-similar behavior near the blow-up time for the kinetic WT equation. We
notice that the hierarchy obtained in this paper is equivalent to a class of
random fields that satisfy the cubic nonlinear Schr\"{o}dinger   equation for
times $t\in \left( -\infty ,\infty \right) $ and become uncorrelated as $%
t\rightarrow -\infty .$\\

The plan of this paper is the following. In Section 2 we formulate precisely
the problem under consideration and we derive the full hierarchy of
cumulants associated to the problem. In Section 3 we describe how to obtain
a closure of the hierarchy using the fact that the equations contain a small
parameter $\varepsilon $ that measures the strength of the nonlinear terms.
In Section 4 we describe in detail the self-similar solutions yielding
blow-up for the isotropic version of the WT kinetic equation. These
solutions have been obtained numerically in \cite{SK1}, \cite{SK2}, \cite%
{JPR}, \cite{LLPR}. Section 5 describes how the breakdown of the closure
scheme that allows to obtain the kinetic equation taking as starting point
the hierarchy of cumulants takes place near the blow-up time. Finally in
Section 6 we briefly discuss a non-Markovian kinetic equation that strictly
speaking does not arise from an analysis of the cumulant equations, but
yields in a suitable limit the Markovian equation. It has some independent
mathematical interest and it might provide some light understanding the
limit from non-Markovian to Markovian.

\bigskip 

\section{Hierarchy of correlation functions.}
\label{Correlations}
\setcounter{equation}{0}
\setcounter{theorem}{0}
\bigskip

Our goal is to derive an effective description for the solutions of the
following problem\ 
\begin{equation}
i\partial _{t}u=-\frac{1}{2}\Delta u+\varepsilon \left\vert u\right\vert
^{2}u\ \ ,\ \ x\in \mathbb{R}^{3}\ \ ,\ \ t\in \mathbb{R}\ ,\ \ \varepsilon
>0  \label{S1E1}
\end{equation}%
\begin{equation}
u\left( x,0\right) =u_{0}\left( x,\omega \right)  \label{S1E2}
\end{equation}%
where the initial data is a random variable $u_{0}:\mathbb{R}^{3}\times
\Omega \rightarrow \mathbb{C},$ with $u_{0}\left( \cdot ,\omega \right) \in
C\left( \mathbb{R}^{3}\right) $ for $a.e.$ $\omega \in \Omega .$ Notice that
the sign of $\varepsilon $ corresponding to the so-called defocusing case
corresponds to the case of absence of blow-up in finite time (\cite{ThC}).

The initial value $u_{0}$ will be assumed to be a Gaussian variable, and
therefore, it is uniquely characterized by means of its average and
correlation function, i.e.%
\begin{equation}
\mathbb{E}\left[ u_{0}\left( x\right) \right] =0\ \ ,\ \ \mathbb{E}\left[
u_{0}^{\ast }\left( x\right) u_{0}\left( y\right) \right] =N_{0}\left(
x-y\right) \ \ ,\ \ x,y\in \mathbb{R}^{3}  \label{S1E3}
\end{equation}%
where $N_{0}=N_{0}\left( x\right) $ is a nonnegative function from $\mathbb{R%
}^{3}$ to $\mathbb{R}$ that decreases sufficiently fast as $\left\vert
x\right\vert \rightarrow \infty .$

We remark that in (\ref{S1E3}) we assume that the probability distribution
assigning the values of $u_{0}$ is invariant under translations, i.e. the
probability distribution is invariant under the change $x\rightarrow x+a$
for any $a\in \mathbb{R}^{3}.$

\bigskip 

Most likely, the solutions of (\ref{S1E1}), (\ref{S1E2}), (\ref{S1E3}) do
not exist with the degree of generality indicated in those equations. As a
matter of fact, in the rigorous derivation in \cite{DeHa} the nonlinear Schr%
\"{o}dinger equation is solved in a torus $\mathbb{T}_{L}$ with size $L$ and 
$L$ is sent to infinity as $\varepsilon \rightarrow 0.$ This allows to work
with functions $u\left( \cdot ,t\right) $ which are well defined in the
torus $\mathbb{T}_{L}.$ In any case, we will write the formal arguments of
this paper assuming that the nonlinear Schr\"{o}dinger equation is solved in
the whole $\mathbb{R}^{3}$ in order to simplify the arguments. We remark
also that in some of the rigorous works about WT, specifically in \cite{LuSp}%
, the Laplacian in the Schr\"{o}dinger equation is replaced by a discretized
Laplacian. This allows to work in a bounded set of Fourier frequencies and
it avoids difficulties associated to the fine scale of the solutions for
small wavelengths.

\bigskip 

The theory of Wave Turbulence allows to describe the dynamics of a system of
weakly non-linear waves by means of kinetic equations. In particular, in the
case of the problem (\ref{S1E1})-(\ref{S1E3}), it is possible to see
formally that $u\left( x,t\right) $ can be approximated as $\varepsilon
\rightarrow 0$ by means of a Gaussian variable with zero average, whose
correlation function evolves in times of order $\frac{1}{\varepsilon ^{2}}.$
Moreover, the evolution of the Fourier transform of the function $N\left(
x,t\right) $ which characterizes the random variable $u\left( x,t\right) $
is given by a kinetic equation. More precisely, we will denote the Fourier
transform of $N\left( x,t\right) $ in the $x$ variable as $n\left(
k,t\right) ,$ i.e.%
\[
n\left( k,t\right) =\frac{1}{\left( 2\pi \right) ^{\frac{3}{2}}}\int_{%
\mathbb{R}^{3}}N\left( x,t\right) e^{-ik\cdot x}dx 
\]

Then, we have 
\begin{eqnarray}
&&\partial _{t}n_{1}=8\pi \varepsilon ^{2}\int_{\mathbb{R}^{3}}dk_{2}\int_{%
\mathbb{R}^{3}}dk_{3}\int_{\mathbb{R}^{3}}dk_{4}
\delta \left(k_{1}+k_{2}-k_{3}-k_{4}\right)\times \nonumber\\
&&\times  \delta \left( \left\vert k_{1}\right\vert
^{2}+\left\vert k_{2}\right\vert ^{2}-\left\vert k_{3}\right\vert
^{2}-\left\vert k_{4}\right\vert ^{2}\right)\left[ \left( n_{1}+n_{2}\right) n_{3}n_{4}-\left(
n_{3}+n_{4}\right) n_{1}n_{2}\right]    \label{S1E4}
\end{eqnarray}%
where we use the standard kinetic notation%
\[
n_{j}=n\left( k_{j},t\right) \ \ ,\ \ j=1,2,3,4.
\]

As indicated in the introduction, the derivation of (\ref{S1E4}) taking as
starting point (\ref{S1E1})-(\ref{S1E3} has been obtained by many authors
with different levels of mathematical rigour. We recall here a formal
derivation of (\ref{S1E4}),  that follows the arguments of  \cite{BN}, \cite{ZMR}, \cite{DNPZ}   which will be particularly suited to describe the
onset of correlations near the blow-up of the solutions of (\ref{S1E4}). To
this end, we introduce the following family of correlation functions%
\begin{eqnarray}
&&F_{L,M}\left( x_{1},x_{2},...,x_{L};y_{1},y_{2},...,y_{M}\right) =\nonumber\\
&&=\mathbb{E}\left[ u_{0}^{\ast }\left( x_{1}\right) u_{0}^{\ast }\left( x_{2}\right)
...u_{0}^{\ast }\left( x_{L}\right); u_{0}\left( y_{1}\right) u_{0}\left(
y_{2}\right) ...u_{0}\left( y_{M}\right) \right].   \label{S1E5}
\end{eqnarray}

Notice that the functions $F_{L,M}$ are invariant under permutations of any
set of variables $x_{k}$ or $y_{k}.$
\subsection{The Cauchy problem for the hierarchy of  correlation functions}
The functions $F_{L,M}$ depend also of the $t$ variable, although this
dependence will not be made explicit for the sake of simplicity. We can
derive a set of evolution equations for the functions $F_{L,M}$ using (\ref%
{S1E1}). This will result in a hierarchy of equations in which the evolution
of $F_{L,M}$ is written in terms of $F_{L,M}$ itself as well as the
functions $F_{L+1,M+1}.$ More precisely, differentiating (\ref{S1E5}) and
using (\ref{S1E1}) we obtain
\begin{eqnarray}
&&i\partial _{t}F_{L,M}\left(
x_{1},x_{2},...,x_{L};y_{1},y_{2},...,y_{M}\right)=  \nonumber \\
&&=\frac{1}{2}\left( \sum_{j=1}^{L}\Delta _{x_{j}}-\sum_{j=1}^{M}\Delta
_{y_{j}}\right) F_{L,M}\left(
x_{1},x_{2},...,x_{L};y_{1},y_{2},...,y_{M}\right) -  \nonumber \\
&&-\varepsilon \sum_{j=1}^{L}F_{L+1,M+1}\left(
x_{1},x_{2},...,x_{L},x_{j};y_{1},y_{2},...,y_{M},x_{j}\right) +  \nonumber
\\
&&+\varepsilon \sum_{j=1}^{M}F_{L+1,M+1}\left(
x_{1},x_{2},...,x_{L},y_{j};y_{1},y_{2},...,y_{M},y_{j}\right),\,\,  t\in  \mathbb{R},\, x_i\in  \mathbb{R}^3, y_i\in \mathbb{R}^3. \label{S1E6}
\end{eqnarray}

Equations (\ref{S1E6}) are reminiscent from the set of hierarchies that can be
found in \cite{EV0}. We need to complement (\ref{S1E6}) with a set of
initial values. These can be obtained using the fact that the variables $%
u_{0}$ are Gaussian variables, combined with (\ref{S1E3}). Due to the fact
that the variables $u_{0}$ are Gaussian, we can obtain all the correlation
functions $F_{L,M}\left( x_{1},x_{2},...,x_{L};y_{1},y_{2},...,y_{M}\right) $
at time $t=0,$ in terms of the correlation functions that can be found in (%
\ref{S1E3}). To this end we use Isserlis Theorem (\cite{I}), that states that%
\[
\mathbb{E}\left[ Z_{1}Z_{2}...Z_{n}\right] =\sum_{p\in
P_{n}^{2}}\dprod\limits_{\left\{ j,k\right\} \in p}\mathbb{E}\left[
Z_{j}Z_{k}\right] 
\]%
where $P_{n}^{2}$ are all the possible ways of partitioning $\left\{
1,2,....,n\right\} $ into pairs $\left\{ j,k\right\} .$ The variables $Z_{j}$
are complex random variables distributed according to a multivariate random
normal vector. Using (\ref{S1E3}) we then obtain that%
\begin{equation}
\mathbb{E}\left[ u_{0}^{\ast }\left( x_{1}\right) u_{0}^{\ast }\left(
x_{2}\right) ...u_{0}^{\ast }\left( x_{L}\right) u_{0}\left( y_{1}\right)
u_{0}\left( y_{2}\right) ...u_{0}\left( y_{M}\right) \right] =0,\,\ \  if\,\,\,\,
L\neq M  \label{S1E7}
\end{equation}%
and%
\begin{eqnarray}
&&\mathbb{E}\left[ u_{0}^{\ast }\left( x_{1}\right) u_{0}^{\ast }\left(
x_{2}\right) ...u_{0}^{\ast }\left( x_{L}\right) u_{0}\left( y_{1}\right)
u_{0}\left( y_{2}\right) ...u_{0}\left( y_{L}\right) \right]=  \nonumber \\
&&=\sum_{p\in P_{L,L}^{2}}\dprod\limits_{\left\{ j,k\right\} \in p}\mathbb{E}%
\left[ u_{0}^{\ast }\left( x_{j}\right) u_{0}\left( y_{k}\right) \right]
=\sum_{p\in P_{L,L}^{2}}\dprod\limits_{\left\{ j,k\right\} \in p}N_{0}\left(
x_{j}-y_{k}\right)\nonumber\\
&& =\sum_{\sigma \in
S^{L}}\dprod\limits_{j=1}^{L}N_{0}\left( x_{j}-y_{\sigma \left( j\right)
}\right)  \label{S1E8}
\end{eqnarray}%
where $P_{L,L}^{2}$ are all the possible ways of pairing the elements of $%
\left\{ 1,2,...,L\right\} $ with the elements of $\left\{ 1,2,...,L\right\}
, $ i.e. the group of permutations $S_{L}$ of the elements $\left\{
1,2,...,L\right\} .$ We then obtain the following initial values for the
functions $F_{L,M}$ that must be used to solve (\ref{S1E6})%
\begin{eqnarray}
F_{L,M}\left( x_{1},x_{2},...,x_{L};y_{1},y_{2},...,y_{L};0\right) &=&0\ \ 
if \ \ L\neq M  \label{S1E9a} \\
F_{L,L}\left( x_{1},x_{2},...,x_{L};y_{1},y_{2},...,y_{L};0\right)
&=&\sum_{\sigma \in S^{L}}\dprod\limits_{j=1}^{L}N_{0}\left( x_{j}-y_{\sigma
\left( j\right) }\right)  \label{S1E9b}
\end{eqnarray}

\subsection{The Cauchy problem  in Fourier variables}
We can reformulate (\ref{S1E6}), (\ref{S1E9a}), (\ref{S1E9b}) using Fourier
variables. To this end, we define the Fourier transform of the functions $%
F_{L,M}$ by means of%
\begin{eqnarray}
&&\widehat{F_{L,M}}\left( k_{1},k_{2},...,k_{L};\xi _{1},\xi _{2},...,\xi
_{M};t\right)=\frac{1}{\left( 2\pi \right) ^{\frac{3}{2}\left( L+M\right) }}\times   \nonumber \\
&&\times \int_{\left( \mathbb{R}^{3}\right) ^{L}}dx_{1}dx_{2}...dx_{L}\int_{\left( 
\mathbb{R}^{3}\right) ^{M}}dy_{1}dy_{2}...dy_{M}\,e^{ \left( -i\left(
\sum_{j=1}^{L}k_{j}x_{j}-\sum_{\ell =1}^{M}\xi _{\ell }y_{\ell }\right)
\right)} \times \nonumber \\ 
&&\times  F_{L,M}\left( x_{1},x_{2},...,x_{L};y_{1},y_{2},...,y_{M};t\right), \,\,t\in  \mathbb{R},\, k_i\in  \mathbb{R}^3, \xi _i\in \mathbb{R}^3.
\label{S1E10}
\end{eqnarray}

We then have, using (\ref{S1E9a}), (\ref{S1E9b}), the following initial
values for these correlation functions%
 \begin{equation}
\widehat{F_{L,M}}\left( k_{1},k_{2},...,k_{L};\xi _{1},\xi _{2},...,\xi
_{M};0\right) =0,\ \ \ \  if \,\,L\neq M  \label{S2E2a}
\end{equation}%

\begin{eqnarray*}
&&\widehat{F_{L,L}}\left( k_{1},k_{2},...,k_{L};\xi _{1},\xi _{2},...,\xi
_{L};0\right)= \\
&&=\frac{1}{\left( 2\pi \right) ^{\frac{3}{2}\left( L+M\right) }}%
\int_{\left( \mathbb{R}^{3}\right) ^{L}}dx_{1}dx_{2}...dx_{L}\int_{\left( 
\mathbb{R}^{3}\right) ^{L}}dy_{1}dy_{2}...dy_{L}\times \nonumber \\
&& \times \exp \left( -i\left(
\sum_{j=1}^{L}k_{j}x_{j}-\sum_{\ell =1}^{L}\xi _{\ell }y_{\ell }\right)
\right) \sum_{\sigma \in S^{L}}\dprod\limits_{j=1}^{L}N_{0}\left(
x_{j}-y_{\sigma \left( j\right) }\right).
\end{eqnarray*}

Then%
\begin{eqnarray*}
&&\widehat{F_{L,L}}\left( k_{1},k_{2},...,k_{L};\xi _{1},\xi _{2},...,\xi
_{L};0\right)=  \\
&&=\frac{1}{\left( 2\pi \right) ^{\frac{3}{2}\left( L+M\right) }}%
\sum_{\sigma \in S^{L}}\int_{\left( \mathbb{R}^{3}\right) ^{L}}dy_{\sigma
\left( 1\right) }dy_{\sigma \left( 2\right) }...dy_{\sigma \left( L\right)
}\int_{\left( \mathbb{R}^{3}\right) ^{L}}dx_{1}dx_{2}...dx_{L}\times \\
&&\times \exp \left(
-i\left( \sum_{j=1}^{L}\left( k_{j}x_{j}-\xi _{\sigma \left( j\right)
}y_{\sigma \left( j\right) }\right) \right) \right) \dprod\limits_{j=1}^{L}N_{0}\left( x_{j}-y_{\sigma \left( j\right)
}\right).
\end{eqnarray*}

Therefore%
\begin{eqnarray*}
&&\widehat{F_{L,L}}\left( k_{1},k_{2},...,k_{L};\xi _{1},\xi _{2},...,\xi
_{L};0\right)= \frac{1}{\left( 2\pi \right) ^{\frac{3}{2}\left( L+L\right) }}\times \\
&&\times 
\sum_{\sigma \in S^{L}}\prod_{j=1}^{L}\left[ \int_{\mathbb{R}^{3}}dy_{\sigma
\left( j\right) }\int_{\mathbb{R}^{3}}dx_{j}\exp \left( -i\left( \left(
k_{j}x_{j}-\xi _{\sigma \left( j\right) }y_{\sigma \left( j\right) }\right)
\right) \right) N_{0}\left( x_{j}-y_{\sigma \left( j\right) }\right) \right]
\end{eqnarray*}

Thus%
\begin{eqnarray}
&&\widehat{F_{L,L}}\left( k_{1},k_{2},...,k_{L};\xi _{1},\xi _{2},...,\xi
_{L};0\right)= \frac{1}{\left( 2\pi \right) ^{\frac{3}{2}\left( L+L\right) }}\times \nonumber \\
&&\times 
\sum_{\sigma \in S^{L}}\prod_{j=1}^{L}\Bigg[ \int_{\mathbb{R}^{3}}dy_{\sigma
\left( j\right) }\int_{\mathbb{R}^{3}}dx_{j}\exp \left( -i\left( k_{j}\left(
x_{j}-y_{\sigma \left( j\right) }\right) \right) \right)\times \nonumber\\
&&\times  \exp \left(
-i\left( k_{j}-\xi _{\sigma \left( j\right) }\right) y_{\sigma \left(
j\right) }\right) N_{0}\left( x_{j}-y_{\sigma \left( j\right) }\right) %
\Bigg]  \nonumber \\
&&=\frac{1}{\left( 2\pi \right) ^{\frac{3}{2}\left( L+L\right) }}%
\sum_{\sigma \in S^{L}}\prod_{j=1}^{L}\Bigg[ \int_{\mathbb{R}^{3}}dy_{\sigma
\left( j\right) }\times \nonumber\\
&&\times \exp \left( -i\left( k_{j}-\xi _{\sigma \left( j\right)
}\right) y_{\sigma \left( j\right) }\right) \int_{\mathbb{R}^{3}}dz_{j}\exp
\left( -ik_{j}z_{j}\right) N_{0}\left( z_{j}\right) \Bigg]  \label{S2E1}
\end{eqnarray}
where we use the change of variables $z_{j}=x_{j}-y_{\sigma \left( j\right)
},\ dz_{j}=dx_{j}.$ We now write the Fourier transform for the function $%
N_{0},$ that depends on a single variable, as%
\[
n_{0}\left( k\right) =\frac{1}{\left( 2\pi \right) ^{\frac{3}{2}}}\int_{%
\mathbb{R}^{3}}dz\exp \left( -ikz\right) N_{0}\left( z\right) 
\]

We can then write (\ref{S2E1}) as%
\begin{eqnarray*}
&&\widehat{F_{L,L}}\left( k_{1},k_{2},...,k_{L};\xi _{1},\xi _{2},...,\xi
_{L};0\right) =\frac{1}{\left( 2\pi \right) ^{\frac{3}{2}L}}\times \\
&&\times \sum_{\sigma \in
S^{L}}\prod_{j=1}^{L}\left[ \int_{\mathbb{R}^{3}}dy_{\sigma \left( j\right)
}\exp \left( -i\left( k_{j}-\xi _{\sigma \left( j\right) }\right) y_{\sigma
\left( j\right) }\right) n_{0}\left( k_{j}\right) \right] .
\end{eqnarray*}

We now use that%
\begin{equation}
\frac{1}{\left( 2\pi \right) ^{3}}\int_{\mathbb{R}^{3}}dz\exp \left(
-ikz\right) =\delta \left( k\right).  \label{T1E6}
\end{equation}

Therefore%
\begin{equation}
\widehat{F_{L,L}}\left( k_{1},k_{2},...,k_{L};\xi _{1},\xi _{2},...,\xi
_{L};0\right) =\left( 2\pi \right) ^{\frac{3}{2}L}\sum_{\sigma \in
S^{L}}\prod_{j=1}^{L}\left[ \delta \left( k_{j}-\xi _{\sigma \left( j\right)
}\right) n_{0}\left( k_{j}\right) \right]  \label{S2E2}
\end{equation}

This formula, combined with (\ref{S2E2a}) gives the form of the Fourier
transforms of the correlation functions $F_{L,M}$ at time $t=0.$

\bigskip

We now rewrite the equations (\ref{S1E6}) using the Fourier variables.
In order to simplify some of the formulas below, let us introduce the following notation. Given some  $z\in \mathbb{R}^{3}$ and the function $F _{ M, L }$, depending on the generic variables $\left( x_{1},x_{2},...,x_{L};y_{1},y_{2},...,y_{M}\right) $, we denote $F _{ L, M }^{\{z\}}$ the function defined as
\begin{eqnarray}
&&F _{ L, M }^{\{z\}}\left( x_{1},x_{2},...,x_{L};y_{1},y_{2},...,y_{M}\right) \nonumber \\
&&=F _{ L+1, M+1 }\left( x_{1},x_{2},...,x_{L}, z;y_{1},y_{2},...,y_{M}, z\right)\label{FLM}
\end{eqnarray} 
for all $\{ x_{1},x_{2},...,x_{L};y_{1},y_{2},...,y_{M} \} \in (\mathbb{R}^{3})^{L+M}.$

Taking the Fourier transform, defined in (\ref{S1E10}), of (\ref{S1E6}) we
obtain%
 
\begin{eqnarray}
&&i\partial _{t}\widehat{F_{L,M}}\left( k_{1},k_{2},...,k_{L};\xi _{1},\xi
_{2},...,\xi _{M};t\right)=  \nonumber \\
&&=\frac{1}{2}\left( -\sum_{j=1}^{L}\left\vert k_{j}\right\vert
^{2}+\sum_{j=1}^{M}\left\vert \xi _{j}\right\vert ^{2}\right) \widehat{%
F_{L,M}}\left( k_{1},k_{2},...,k_{L};\xi _{1},\xi _{2},...,\xi _{M};t\right)
-  \nonumber \\
&&-\varepsilon \sum_{j=1}^{L}\mathcal{F}\left[ F_{L+1,M+1}\left(
x_{1},x_{2},...,x_{L},x_{j};y_{1},y_{2},...,y_{M},x_{j};t\right) \right]\nonumber\\
&&\hskip 5.5cm \left( k_{1},k_{2},...,k_{L};\xi _{1},\xi _{2},...,\xi _{M};t\right) + 
\nonumber \\
&&+\varepsilon \sum_{j=1}^{M}\mathcal{F}\left[ F_{L+1,M+1}\left(
x_{1},x_{2},...,x_{L},y_{j};y_{1},y_{2},...,y_{M},y_{j};t\right) \right] \nonumber\\
&&\hskip 5.5 cm \left( k_{1},k_{2},...,k_{L};\xi _{1},\xi _{2},...,\xi _{M};t\right)
\label{T1E1}
\end{eqnarray}%

\begin{eqnarray}
&&i\partial _{t}\widehat{F_{L,M}}\left( k_{1},k_{2},...,k_{L};\xi _{1},\xi
_{2},...,\xi _{M};t\right)  \nonumber \\
&=&\frac{1}{2}\left( -\sum_{j=1}^{L}\left\vert k_{j}\right\vert
^{2}+\sum_{j=1}^{M}\left\vert \xi _{j}\right\vert ^{2}\right) \widehat{%
F_{L,M}}\left( k_{1},k_{2},...,k_{L};\xi _{1},\xi _{2},...,\xi _{M};t\right)
-  \nonumber \\
&&-\varepsilon \sum_{j=1}^{L}\mathcal{F}\left[ F^{\{x_j\}}_{L,M} \right]\left( k_{1},k_{2},...,k_{L};\xi _{1},\xi _{2},...,\xi _{M};t\right) + 
\nonumber \\
&&+\varepsilon \sum_{j=1}^{M}\mathcal{F}\left[ F^{\{x_j\}}_{L,M}\right]
\left( k_{1},k_{2},...,k_{L};\xi _{1},\xi _{2},...,\xi _{M};t\right)
\label{T1E1BB}
\end{eqnarray}

where $\mathcal{F}$ denoted the Fourier transform, defined by means of (\ref%
{S1E10}). Notice that the definition depends on the number and type of
variables in which $\mathcal{F}$ is acting. We then use that inverting the
Fourier transform defined in (\ref{S1E10}) we have%
\begin{eqnarray}
&&F_{L+1,M+1}\left(
x_{1},x_{2},...,x_{L},x_{L+1};y_{1},y_{2},...,y_{M},y_{M+1};t\right)= 
\nonumber \\
&&= \frac{1}{\left( 2\pi \right) ^{\frac{3}{2}\left( L+M+2\right) }}\times \int_{\left( \mathbb{R}^{3}\right) ^{L+1}}d\bar{k}_{1}d\bar{k}_{2}...d\bar{k}%
_{L}d\bar{k}_{L+1}\int_{\left( \mathbb{R}^{3}\right) ^{M}+1}d\bar{\xi}_{1}d%
\bar{\xi}_{2}...d\bar{\xi}_{M}d\bar{\xi}_{M+1}\nonumber \\
&&\times \exp \left( i\left(
\sum_{j=1}^{L+1}\bar{k}_{j}x_{j}-\sum_{\ell =1}^{M+1}\bar{\xi}_{\ell
}y_{\ell }\right) \right)\times \nonumber \\
&&\times \widehat{F_{L+1,M+1}}\left( \bar{k}_{1},\bar{k}_{2},...,\bar{k}_{L},%
\bar{k}_{L+1};\bar{\xi}_{1},\bar{\xi}_{2},...,\bar{\xi}_{M},\bar{\xi}%
_{M+1};t\right).  \label{T1E2}
\end{eqnarray}

Then
\begin{eqnarray}
&&F_{L+1,M+1}\left(
x_{1},x_{2},...,x_{L},x_{j};y_{1},y_{2},...,y_{M},x_{j};t\right)=  \nonumber
\\
&&=\frac{1}{\left( 2\pi \right) ^{\frac{3}{2}\left( L+M+2\right) }}%
\int_{\left( \mathbb{R}^{3}\right) ^{L+1}}d\bar{k}_{1}d\bar{k}_{2}...d\bar{k}%
_{L}d\bar{k}_{L+1}\int_{\left( \mathbb{R}^{3}\right) ^{M+1}}d\bar{\xi}_{1}d%
\bar{\xi}_{2}...d\bar{\xi}_{M}d\bar{\xi}_{M+1}\times \nonumber \\
&&\times \exp \left( i\left(
\sum_{s=1}^{L}\bar{k}_{s}x_{s}-\sum_{\ell =1}^{M}\bar{\xi}_{\ell }y_{\ell
}\right) \right)\exp \left( i\left( \left( \bar{k}_{L+1}-\bar{\xi}_{M+1}\right)
x_{j}\right) \right)\times \nonumber\\ 
&&\times \widehat{F_{L+1,M+1}}\left( \bar{k}_{1},\bar{k}%
_{2},...,\bar{k}_{L},\bar{k}_{L+1};\bar{\xi}_{1},\bar{\xi}_{2},...,\bar{\xi}%
_{M},\bar{\xi}_{M+1};t\right)  \label{T1E3}
\end{eqnarray}%
for any $j=1,2,...,L.$  Using now the notation in (\ref{FLM}), for each $j=1,...L$,
\begin{eqnarray*}
&&F _{ L, M }^{\{x_j\}}\left( x_{1},x_{2},...,x_{L};y_{1},y_{2},...,y_{M}\right)  \\
&&=F _{ L+1, M+1 }\left( x_{1},x_{2},...,x_{L}, x_j;y_{1},y_{2},...,y_{M}, x_j\right) 
\end{eqnarray*} 

we now compute the Fourier transform of this
function, that is
given by%
\begin{eqnarray*}
&&\mathcal{F}\left[ F^{\{x_j\}}_{L,M} \right]
\left( k_{1},k_{2},...,k_{L};\xi _{1},\xi _{2},...,\xi _{M};t\right)= \\
&&=\frac{1}{\left( 2\pi \right) ^{\frac{3}{2}\left( L+M\right) }}\frac{1}{%
\left( 2\pi \right) ^{\frac{3}{2}\left( L+M+2\right) }}\int_{\left( \mathbb{R%
}^{3}\right) ^{L}}dx_{1}dx_{2}...dx_{L}\int_{\left( \mathbb{R}^{3}\right)
^{M}}dy_{1}dy_{2}...dy_{M}\times \\
&&\exp \left( -i\left(
\sum_{s=1}^{L}k_{s}x_{s}-\sum_{\ell =1}^{M}\xi _{\ell }y_{\ell }\right)
\right) \times  \\
&&\times  \int_{\left( \mathbb{R}^{3}\right) ^{L+1}}d\bar{k}_{1}d\bar{k}%
_{2}...d\bar{k}_{L}d\bar{k}_{L+1}\int_{\left( \mathbb{R}^{3}\right) ^{M+1}}d%
\bar{\xi}_{1}d\bar{\xi}_{2}...d\bar{\xi}_{M}d\bar{\xi}_{M+1}\times \\
&&\times \exp \left(
i\left( \sum_{s=1}^{L}\bar{k}_{s}x_{s}-\sum_{\ell =1}^{M}\bar{\xi}_{\ell
}y_{\ell }\right) \right)\exp \left( i\left( \left( \bar{k}_{L+1}-\bar{\xi}_{M+1}\right)
x_{j}\right) \right) \times \\
&&\times \widehat{F_{L+1,M+1}}\left( \bar{k}_{1},\bar{k}%
_{2},...,\bar{k}_{L},\bar{k}_{L+1};\bar{\xi}_{1},\bar{\xi}_{2},...,\bar{\xi}%
_{M},\bar{\xi}_{M+1};t\right)
\end{eqnarray*}

Using now the identity (in the sense of distributions) $\frac{1}{\left( 2\pi
\right) ^{3}}\int_{\mathbb{R}^{3}}dx\exp \left( ikx\right) =\delta \left(
k\right) $ (cf. (\ref{T1E6})) we obtain%
\begin{eqnarray*}
&&\mathcal{F}\left[ F^{\{x_j\}}_{L,M}\right]
\left( k_{1},k_{2},...,k_{L};\xi _{1},\xi _{2},...,\xi _{M};t\right) =\\
&&=\frac{1}{\left( 2\pi \right) ^{3}}\int_{\left( \mathbb{R}^{3}\right)
^{L+1}}d\bar{k}_{1}d\bar{k}_{2}...d\bar{k}_{L}d\bar{k}_{L+1}\int_{\left( 
\mathbb{R}^{3}\right) ^{M+1}}d\bar{\xi}_{1}d\bar{\xi}_{2}...d\bar{\xi}_{M}d%
\bar{\xi}_{M+1}\times  \\
&&\times  \prod_{\ell =1}^{M}\left[ \delta \left( \xi _{\ell }-\bar{\xi}_{\ell
}\right) \right]  \prod_{s=1;s\neq j}^{L}\left[ \delta \left( k_{s}-%
\bar{k}_{s}\right) \right] \delta \left( \bar{k}_{j}-k_{j}+\bar{k}_{L+1}-%
\bar{\xi}_{M+1}\right) \times  \\
&&\times  \widehat{F_{L+1,M+1}}\left( \bar{k}_{1},\bar{k}_{2},...,\bar{k}_{L},%
\bar{k}_{L+1};\bar{\xi}_{1},\bar{\xi}_{2},...,\bar{\xi}_{M},\bar{\xi}%
_{M+1};t\right)
\end{eqnarray*}

Therefore%
\begin{eqnarray*}
&&\mathcal{F}\left[ F^{\{x_j\}}_{L,M}\right]
\left( k_{1},k_{2},...,k_{L};\xi _{1},\xi _{2},...,\xi _{M};t\right) = \\
&&=\frac{1}{\left( 2\pi \right) ^{3}}\int_{\left( \mathbb{R}^{3}\right)
^{2}}d\bar{k}_{j}d\bar{k}_{L+1}\int_{\mathbb{R}^{3}}d\bar{\xi}_{M+1}\cdot
\delta \left( \bar{k}_{j}-k_{j}+\bar{k}_{L+1}-\bar{\xi}_{M+1}\right) \times  \\
&&\times  \widehat{F_{L+1,M+1}}\left( k_{1},k_{2},...k_{j-1},\bar{k}%
_{j},k_{j+1},...,k_{L},\bar{k}_{L+1};\xi _{1},\xi _{2},...,\xi _{M},\bar{\xi}%
_{M+1};t\right)
\end{eqnarray*}

Henceforth%
\begin{eqnarray}
&&\mathcal{F}\left[ F^{\{x_j\}}_{L,M}\right]
\left( k_{1},k_{2},...,k_{L};\xi _{1},\xi _{2},...,\xi _{M};t\right) = 
\frac{1}{\left( 2\pi \right) ^{3}}\int_{\mathbb{R}^{3}}d\bar{k}%
_{L+1}\int_{\mathbb{R}^{3}}d\bar{\xi}_{M+1}\times \nonumber\\
&&\widehat{F_{L+1,M+1}}\Big(
k_{1},k_{2},...k_{j-1},k_{j}-\bar{k}_{L+1}+\bar{\xi}_{M+1},k_{j+1},...,k_{L},%
\bar{k}_{L+1};\nonumber \\
&&\hskip 6.6cm \xi _{1},\xi _{2},...,\xi _{M},\bar{\xi}_{M+1};t\Big)
\label{T1E4}
\end{eqnarray}
We can compute arguing similarly the term 
$
\mathcal{F}\left[ F^{\{y_j\}}_{L,M}\right]
\left( k_{1},k_{2},...,k_{L};\xi _{1},\xi _{2},...,\xi _{M};t\right) 
$
that yields
 
\begin{eqnarray}
&&\mathcal{F}\left[ F^{\{y_j\}}_{L,M}\right]
\left( k_{1},k_{2},...,k_{L};\xi _{1},\xi _{2},...,\xi _{M};t\right) =
\frac{1}{\left( 2\pi \right) ^{3}}\int_{\mathbb{R}^{3}}d\bar{k}%
_{L+1}\int_{\mathbb{R}^{3}}d\bar{\xi}_{M+1}\nonumber \\
&&\widehat{F_{L+1,M+1}}
\Big(
k_{1},k_{2},...,k_{L},\bar{k}_{L+1};\\\nonumber
&&\hskip 2cm;\xi _{1},\xi _{2},...,\xi _{j-1},\xi
_{j}+\bar{k}_{L+1}-\bar{\xi}_{M+1},\xi _{j+1},...,\xi _{M},\bar{\xi}%
_{M+1};t\Big).  \label{T1E5}
\end{eqnarray}
(Notice the change of sign in the term $\bar{\xi}_{j}$).

Therefore, combining (\ref{T1E1}), (\ref{T1E4}) and (\ref{T1E5}) we obtain
the following evolution equation for $\widehat{F_{L,M}}$

\begin{eqnarray}
&&i\partial _{t}\widehat{F_{L,M}}\left( k_{1},k_{2},...,k_{L};\xi _{1},\xi
_{2},...,\xi _{M};t\right)=\nonumber\\
&&=\frac{1}{2}\left( -\sum_{j=1}^{L}\left\vert k_{j}\right\vert
^{2}+\sum_{j=1}^{M}\left\vert \xi _{j}\right\vert ^{2}\right) \widehat{%
F_{L,M}}\left( k_{1},k_{2},...,k_{L};\xi _{1},\xi _{2},...,\xi _{M};t\right)
-  \nonumber \\
&&-\frac{\varepsilon }{\left( 2\pi \right) ^{3}}\sum_{j=1}^{L}\int_{\mathbb{R%
}^{3}}d\bar{k}_{L+1}\int_{\mathbb{R}^{3}}d\bar{\xi}_{M+1}\widehat{F_{L+1,M+1}%
}\Big( k_{1},k_{2},...k_{j-1},\nonumber \\
&&\hskip 0.5cm ,k_{j}-\bar{k}_{L+1}+\bar{\xi}%
_{M+1},k_{j+1},...,k_{L},\bar{k}_{L+1};\xi _{1},\xi _{2},...,\xi _{M},\bar{%
\xi}_{M+1};t\Big) +  \nonumber \\
&&+\frac{\varepsilon }{\left( 2\pi \right) ^{3}}\sum_{j=1}^{M}\int_{\mathbb{R%
}^{3}}d\bar{k}_{L+1}\int_{\mathbb{R}^{3}}d\bar{\xi}_{M+1}\widehat{F_{L+1,M+1}%
}\Big( k_{1},k_{2},...,k_{L},\bar{k}_{L+1};;\xi _{1},\nonumber \\
&&,\xi _{2},...,\xi
_{j-1},\xi _{j}+\bar{k}_{L+1}-\bar{\xi}_{M+1},\xi _{j+1},...,\xi _{M},\bar{%
\xi}_{M+1};t\Big),\,\,\,t\in  \mathbb{R},\, k_i\in  \mathbb{R}^3, \xi _i\in \mathbb{R}^3. \label{S2E3} 
\end{eqnarray}

The functions $F_{L,L}\left(
x_{1},x_{2},...,x_{L};y_{1},y_{2},...,y_{M}\right) $ are invariant under translation and under permutations of the first or second group of variables taken separately. Then%
\begin{eqnarray*}
&&F_{L,L}\left( x_{\sigma \left( 1\right) },x_{\sigma \left( 2\right)
},...,x_{\sigma \left( L\right) };y_{1},y_{2},...,y_{L}\right)=\\
&& =F_{L,L}\left( x_{1},x_{2},...,x_{L};y_{\sigma \left( 1\right) },y_{\sigma
\left( 2\right) },...,y_{\sigma \left( L\right) }\right)\\ 
&& =F_{L,L}\left(
x_{1},x_{2},...,x_{L};y_{1},y_{2},...,y_{M}\right) 
\end{eqnarray*}
for any $\sigma \in S^{L}$ and for all $R$,
\begin{eqnarray*}
&&F_{L,L}\left(
x_{1},x_{2},...,x_{L};y_{1},y_{2},...,y_{M}\right)=\\
&&F _{ L, L } \left(
x_{1}+R,x_{2}+R,...,x_{L}+R;y_{1}+R,y_{2}+R,...,y_{M}+R\right).
\end{eqnarray*}
\\
The translation invariance of  $F_{L,L}$ implies that 
$\widehat{F_{L,L}}$ has  a particular functional form. Such a functional form will be valid even if the correlations
are not small.  Due to this invariance,  in the case $L=1$,
\begin{equation}
\widehat{F_{1,1}}\left( k_{1};\xi _{1};t\right) =\left( 2\pi \right) ^{\frac{%
3}{2}}\delta \left( k_{1}-\xi _{1}\right) n\left( k_{1},t\right)
\label{S3E1} 
\end{equation}
We perform this computation in $\widehat{F_{2,2}}$,
\begin{eqnarray*}
&&\widehat{F_{2,2}}\left( k_{1},k_{2};\xi _{1},\xi _{2};t\right) = \\
&&=\frac{1}{\left( 2\pi \right) ^{6}}\int_{\left( \mathbb{R}^{3}\right)
^{2}}dx_{1}dx_{2}\int_{\left( \mathbb{R}^{3}\right) ^{2}}dy_{1}dy_{2}\exp
\left( -i\left( \sum_{j=1}^{2}k_{j}x_{j}-\sum_{\ell =1}^{2}\xi _{\ell
}y_{\ell }\right) \right)\times \\
&&\hskip 7.5cm  \times F_{2,2}\left( x_{1},x_{2};y_{1},y_{2};t\right)  \\
&&=\frac{1}{\left( 2\pi \right) ^{6}}\int_{\left( \mathbb{R}^{3}\right)
^{2}}dx_{1}dx_{2}\int_{\left( \mathbb{R}^{3}\right) ^{2}}dy_{1}dy_{2}\exp
\left( -i\left( \sum_{j=1}^{2}k_{j}x_{j}-\sum_{\ell =1}^{2}\xi _{\ell
}y_{\ell }\right) \right)\times \\
&&\hskip 5.5cm  \times  F_{2,2}\left(
x_{1}-y_{2},x_{2}-y_{2};y_{1}-y_{2},0;t\right)  \\
&&=\frac{1}{\left( 2\pi \right) ^{6}}\int_{\left( \mathbb{R}^{3}\right)
^{2}}dx_{1}dx_{2}\int_{\left( \mathbb{R}^{3}\right) ^{2}}dy_{1}dy_{2}\exp
\left( -i\left( k_{1}x_{1}+k_{2}x_{2}-\left( \xi _{1}y_{1}+\xi
_{2}y_{2}\right) \right) \right)\times \\
&&\hskip 5.5cm\times  F_{2,2}\left(
x_{1}-y_{2},x_{2}-y_{2};y_{1}-y_{2},0;t\right)  \\
&&=\frac{1}{\left( 2\pi \right) ^{6}}\int_{\left( \mathbb{R}^{3}\right)
^{2}}dx_{1}dx_{2}\int_{\left( \mathbb{R}^{3}\right) ^{2}}dy_{1}dy_{2}\times \\
&&\times \exp
\left( -i\left( k_{1}\left( x_{1}-y_{2}\right) +k_{2}\left(
x_{2}-y_{2}\right) -\xi _{1}\left( y_{1}-y_{2}\right) \right) \right) \times  
\\
&&\times  \exp \left( -i\left( -k_{1}y_{2}-k_{2}y_{2}+\xi _{1}y_{2}+\xi
_{2}y_{2}\right) \right) \!F_{2,2}\left(
x_{1}-y_{2},x_{2}-y_{2};y_{1}-y_{2},0;t\right)  \\
&&=\frac{1}{\left( 2\pi \right) ^{6}}\left( 2\pi \right) ^{3}\int_{\left( 
\mathbb{R}^{3}\right) ^{2}}dx_{1}dx_{2}\int_{\mathbb{R}^{3}}dy_{1}\exp
\left( -i\left( k_{1}x_{1}+k_{2}x_{2}-\xi _{1}y_{1}\right) \right)\times \\
&&\hskip 4cm \times  \delta
\left( k_{1}+k_{2}-\xi _{1}-\xi _{2}\right) F_{2,2}\left(
x_{1},x_{2};y_{1},0;t\right)  \\
&&=\frac{1}{\left( 2\pi \right) ^{3}}\delta \left( k_{1}+k_{2}-\xi _{1}-\xi
_{2}\right) \int_{\left( \mathbb{R}^{3}\right) ^{2}}dx_{1}dx_{2}\int_{%
\mathbb{R}^{3}}dy_{1}F_{2,2}\left( x_{1},x_{2};y_{1},0;t\right)\times \\
&&\hskip 5.7cm\times  \exp \left(
-i\left( k_{1}x_{1}+k_{2}x_{2}-\xi _{1}y_{1}\right) \right) 
\end{eqnarray*}
In absence of correlations, the invariance under permutations of the variables, as well as the
invariance under translations yields the functional dependence%
\begin{equation}
\widehat{F_{L,L}}\left( k_{1},k_{2},...,k_{L};\xi _{1},\xi _{2},...,\xi
_{L};t\right) =\left( 2\pi \right) ^{\frac{3}{2}L}\sum_{\sigma \in
S^{L}}\prod_{j=1}^{L}\left[ \delta \left( k_{j}-\xi _{\sigma \left( j\right)
}\right) n\left( k_{j},t\right) \right]   \label{S3E1a}
\end{equation}
However, for $t>0$ this formula
can only be expected to hold in an approximated way.  
\\\\
We define  the following function $F_{2,2}^{\left( \tau \right) },$ that simplifies the form of $F_{2,2}$ if the problem is
invariant under translations.
\[
F_{2,2}^{\left( \tau \right) }\left( x_{1},x_{2};y_{1};t\right)
=F_{2,2}\left( x_{1},x_{2};y_{1},0;t\right). 
\]
Indeed, we have%
\[
F_{2,2}\left( x_{1},x_{2};y_{1},y_{2};t\right) =F_{2,2}^{\left( \tau \right)
}\left( x_{1}-y_{2},x_{2}-y_{2};y_{1}-y_{2};t\right). 
\]
Then, the Fourier transform of $F_{2,2}^{\left( \tau \right) }$ is given by%
\begin{eqnarray*}
\widehat{F_{2,2}^{\left( \tau \right) }}\left( k_{1},k_{2};\xi _{1};t\right)
=\frac{1}{\left( 2\pi \right) ^{\frac{9}{2}}}\int_{\left( \mathbb{R}%
^{3}\right) ^{2}}dx_{1}dx_{2}\int_{\mathbb{R}^{3}}dy_{1}F_{2,2}^{\left(
t\right) }\left( x_{1},x_{2};y_{1};t\right)\times \\ 
\times \exp \left( -i\left(
k_{1}x_{1}+k_{2}x_{2}-\xi _{1}y_{1}\right) \right) 
\end{eqnarray*}
The function $\widehat{F_{2,2}^{\left( \tau \right) }}$ can be expected to
be smooth. In particular, it does not contain Dirac masses or other measures
supported in low-dimensional sets. Then%
\begin{equation}
\widehat{F_{2,2}}\left( k_{1},k_{2};\xi _{1},\xi _{2};t\right) =\left( 2\pi
\right) ^{\frac{3}{2}}\delta \left( k_{1}+k_{2}-\xi _{1}-\xi _{2}\right) 
\widehat{F_{2,2}^{\left( \tau \right) }}\left( k_{1},k_{2};\xi _{1};t\right)
\label{S3E3}
\end{equation}%
where the function $\widehat{F_{2,2}^{\left( \tau \right) }}$ is smooth.
Notice that, since we have not assumed that the variables are uncorrelated,
we have only one Dirac mass involving the four variables $k_{1},\ k_{2},\
\xi _{1},\ \xi _{2},$ instead of the symmetrized product of Dirac masses in (%
\ref{S2E2}). In the case $L=2,$ this formula becomes%
\begin{eqnarray*}
\widehat{F_{2,2}}\left( k_{1},k_{2};\xi _{1},\xi _{2};0\right) =\left( 2\pi
\right) ^{3}\left[ \delta \left( k_{1}-\xi _{1}\right) \delta \left(
k_{2}-\xi _{2}\right) +\delta \left( k_{1}-\xi _{2}\right) \delta \left(
k_{2}-\xi _{1}\right) \right]\times \\
\times  n_{0}\left( k_{1}\right) n_{0}\left(
k_{2}\right) 
\end{eqnarray*}
Notice that the support of $\widehat{F_{2,2}}\left( k_{1},k_{2};\xi _{1},\xi
_{2};0\right) $ is contained in the union of hyperplanes 
$$\left[ \left\{
k_{1}=\xi _{1}\right\} \times \left\{ k_{2}=\xi _{2}\right\} \right] \cup %
\left[ \left\{ k_{1}=\xi _{2}\right\} \times \left\{ k_{2}=\xi _{1}\right\} %
\right] $$ that is contained in the hyperplane $\left\{ k_{1}+k_{2}=\xi
_{1}+\xi _{2}\right\} $ where $\widehat{F_{2,2}}\left( k_{1},k_{2};\xi
_{1},\xi _{2};t\right) $ is supported for $t>0.$

\section{Closure of the hierarchy. Derivation of a kinetic equation for $
n_{1}.$}
\label{Derivation}
\setcounter{equation}{0}
\setcounter{theorem}{0}

We now notice that we can obtain a solution of the equations (\ref{S2E3})
with initial conditions (\ref{S2E2a}) for the whole set of values $L,\ M$
with $L\neq M$, namely  
\begin{equation}
\label{S2E4}
\widehat{F_{L,M}}\left( k_{1},k_{2},...,k_{L};\xi _{1},\xi _{2},...,\xi
_{M};t\right) =0,\,\,\,t\in  \mathbb{R},\, k_i\in  \mathbb{R}^3, \xi _i\in \mathbb{R}^3,\,\,\,if \ \ \ L\neq M
\end{equation}
Notice that (\ref{S2E4}) would hold if the solutions of the
hierarchy (\ref{S2E3}) are unique. Therefore, we will assume (\ref{S2E4}) in the
following.

We now examine the approximation of the solutions of (\ref{S2E3}) with $L=M$
and initial value (\ref{S2E1}). We consider first the evolution of the
functions $\widehat{F_{L,L}}$ with lowest values of $L.$ Specifically, in
order to compute the evolution of $\widehat{F_{1,1}}$ we use (\ref{S2E3}) 
and (\ref{S2E2}) to
obtain%
\begin{eqnarray}
&&i\partial _{t}\widehat{F_{1,1}}\left( k_{1};\xi _{1};t\right) =
\nonumber \\
&=&\frac{1}{2}\left( -\left\vert k_{1}\right\vert ^{2}+\left\vert \xi
_{1}\right\vert ^{2}\right) \widehat{F_{1,1}}\left( k_{1};\xi _{1};t\right) -
\nonumber \\
&&-\frac{\varepsilon }{\left( 2\pi \right) ^{3}}\int_{\mathbb{R}^{3}}d\bar{k}%
_{2}\int_{\mathbb{R}^{3}}d\bar{\xi}_{2}\widehat{F_{2,2}}\left( k_{1}-\bar{k}%
_{2}+\bar{\xi}_{2},\bar{k}_{2};\xi _{1},\bar{\xi}_{2};t\right) +  \nonumber
\\
&&+\frac{\varepsilon }{\left( 2\pi \right) ^{3}}\int_{\mathbb{R}^{3}}d\bar{k}%
_{2}\int_{\mathbb{R}^{3}}d\bar{\xi}_{2}\widehat{F_{2,2}}\left( k_{1},\bar{k}%
_{2};\xi _{1}+\bar{k}_{2}-\bar{\xi}_{2},\bar{\xi}_{2};t\right)   \label{S2E7} 
\end{eqnarray}
 
\begin{eqnarray}
\widehat{F_{1,1}}\left( k_{1};\xi _{1};0\right)&=&\left( 2\pi \right) ^{\frac{3}{2}%
}\delta \left( k_{j}-\xi _{\sigma \left( j\right) }\right) n_{0}\left(
k_{j}\right).  \label{S2E79}
\end{eqnarray}

As a matter of fact, in order to obtain the evolution equation for $\widehat{%
F_{1,1}}$ we need only the function $G_{2,2}$ (or $\widehat{G_{2,2}}$), that
it is usually termed as the cumulant of second order. More precisely, we
define $G_{2,2}$ by means of%
\begin{eqnarray}
F_{2,2}\left( x_{1},x_{2};y_{1},y_{2}; t\right) =F_{1,1}\left(
x_{1};y_{1}; t\right) F_{1,1}\left( x_{2};y_{2}; t\right) +F_{1,1}\left(
x_{1};y_{2}; t\right) F_{1,1}\left( x_{2};y_{1}; t\right)+\nonumber\\
 +G_{2,2}\left(
x_{1},x_{2};y_{1},y_{2}; t\right).  \label{S2E8}
\end{eqnarray}%
In the expression (\ref{S2E8}) it is implicitly  understood that  $G _{ 2,2 }$ is a lower order term with respect to the two other terms in the right hand side of (\ref{S2E8}). By (\ref{S3E1}), these terms are of order $n^2$.

\begin{proposition}
It follows from equation (\ref{S2E79}) that the function $\widehat F _{ 1, 1 }$, satisfies
\begin{eqnarray}
i\partial _{t}\widehat{F_{1,1}}\left( k_{1};\xi _{1};t\right) =\frac{%
\varepsilon }{\left( 2\pi \right) ^{3}}\int_{\mathbb{R}^{3}}d\bar{k}%
_{2}\int_{\mathbb{R}^{3}}d\bar{\xi}_{2}\Big[ \widehat{G_{2,2}}\left( k_{1},%
\bar{k}_{2};\xi _{1}+\bar{k}_{2}-\bar{\xi}_{2},\bar{\xi}_{2};t\right)- \nonumber \\ 
-\widehat{G_{2,2}}\left( k_{1}-\bar{k}_{2}+\bar{\xi}_{2},\bar{k}_{2};\xi _{1},%
\bar{\xi}_{2};t\right) \Big]  \label{S3E2a}
\end{eqnarray}%
and the function $n$ given in (\ref{S3E1}),
\begin{eqnarray}
i\left( 2\pi \right) ^{\frac{3}{2}}\delta \left( k_{1}-\xi _{1}\right)
\partial _{t}n\left( k_{1},t\right) =\frac{\varepsilon }{\left( 2\pi \right)
^{3}}\int_{\mathbb{R}^{3}}d\bar{k}_{2}\int_{\mathbb{R}^{3}}d\bar{\xi}_{2}%
\Big[ \widehat{G_{2,2}}\left( k_{1},\bar{k}_{2};\xi _{1}+\bar{k}_{2}-\bar{%
\xi}_{2},\bar{\xi}_{2};t\right)-\nonumber \\
 -\widehat{G_{2,2}}\left( k_{1}-\bar{k}_{2}+%
\bar{\xi}_{2},\bar{k}_{2};\xi _{1},\bar{\xi}_{2};t\right) \Big]
\label{S3E2}
\end{eqnarray}
\end{proposition}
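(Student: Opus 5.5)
The plan is to start from the evolution equation (\ref{S2E7}) for $\widehat{F_{1,1}}$ and substitute the decomposition (\ref{S2E8}) into the two integral terms. In Fourier variables, (\ref{S2E8}) together with (\ref{S3E1}) reads
\[
\widehat{F_{2,2}}(k_1,k_2;\xi_1,\xi_2;t)=(2\pi)^{3}\big[\delta(k_1-\xi_1)\delta(k_2-\xi_2)+\delta(k_1-\xi_2)\delta(k_2-\xi_1)\big]n(k_1,t)n(k_2,t)+\widehat{G_{2,2}}(k_1,k_2;\xi_1,\xi_2;t).
\]
This follows because the Fourier transform (\ref{S1E10}) of a product $F_{1,1}(x_1;y_1)F_{1,1}(x_2;y_2)$ factorizes. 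The normalization $(2\pi)^{-3}$ versus $(2\pi)^{-3/2}\cdot(2\pi)^{-3/2}$ is consistent, so it is the same computation that gave (\ref{S2E2}) with $n_0$ replaced by $n(\cdot,t)$.

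Next I would check that the linear term in (\ref{S2E7}) vanishes. By (\ref{S3E1}), $\widehat{F_{1,1}}$ is supported on $k_1=\xi_1$, where $-|k_1|^2+|\xi_1|^2=0$. Formally this gives $(|\xi_1|^2-|k_1|^2)\delta(k_1-\xi_1)=0$.

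The main step is to show that the factorized ($n\,n$) parts of the two integrals cancel exactly, leaving only the $\widehat{G_{2,2}}$ contributions with the signs in (\ref{S3E2a}). I would compute each pairing term explicitly.

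\emph{First integral, argument $(k_1-\bar k_2+\bar\xi_2,\bar k_2;\xi_1,\bar\xi_2)$.}
\begin{itemize}
\item The pairing $\delta(k_1-\bar k_2+\bar\xi_2-\xi_1)\delta(\bar k_2-\bar\xi_2)$, integrated over $\bar\xi_2$, gives $\delta(k_1-\xi_1)\int d\bar k_2\, n(k_1)n(\bar k_2)$.
\item The pairing $\delta(k_1-\bar k_2)\delta(\bar k_2-\xi_1)$ (using the second delta to reduce the first argument) gives $\delta(k_1-\xi_1)\int d\bar\xi_2\, n(\xi_1)n(\bar\xi_2)$.
\end{itemize}

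\emph{Second integral, argument $(k_1,\bar k_2;\xi_1+\bar k_2-\bar\xi_2,\bar\xi_2)$.}
\begin{itemize}
\item The pairing $\delta(k_1-\xi_1-\bar k_2+\bar\xi_2)\delta(\bar k_2-\bar\xi_2)$ gives $\delta(k_1-\xi_1)\int d\bar k_2\, n(k_1)n(\bar k_2)$.
\item The pairing $\delta(k_1-\bar\xi_2)\delta(\bar k_2-\xi_1-\bar k_2+\bar\xi_2)$ gives $\delta(k_1-\xi_1)\int d\bar k_2\, n(k_1)n(\bar k_2)$.
\end{itemize}

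Using $k_1=\xi_1$ on the support, the two integrals produce identical quantities, namely $2\,\delta(k_1-\xi_1)\,n(k_1)\int n$. They enter with opposite signs, so they cancel. This is formally the statement that the cubic nonlinearity produces only a real, momentum-independent phase shift (a Hartree–Fock frequency renormalization), which drops out because both sides carry the same $n(k_1)$.

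What remains is exactly (\ref{S3E2a}). Then (\ref{S3E2}) follows by inserting (\ref{S3E1}) on the left-hand side, with $\partial_t$ acting only on $n$ since the delta factor is time independent.

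I expect the main obstacle to be the bookkeeping of the Dirac deltas in the cancellation step. Each product of deltas must be integrated correctly, and in some terms the integrand is a product of distributions in which one delta carries the other's variables, so a change of variables is needed before concluding. Divergent factors such as $\int n(\bar k_2)\,d\bar k_2$, or a $\delta(0)$, are harmless only because they cancel pairwise. I would therefore verify the cancellation by pairing the terms as above rather than evaluating each separately, and treat everything at the formal distributional level adopted throughout the paper.
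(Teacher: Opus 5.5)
Your proposal is correct and follows essentially the same route as the paper. You insert the Fourier form of (\ref{S2E8}) into (\ref{S2E7}) and check that the four $n\,n$ pairing terms cancel; the paper cancels the two $\delta(k_1-\xi_1)\delta(\bar k_2-\bar\xi_2)$ terms identically and the other two after integration, which matches your bookkeeping. You then drop the free term using the support of $\delta(k_1-\xi_1)$ and read off the equation for $n$ from (\ref{S3E1}), as the paper does.

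One small remark: no $\delta(0)$ actually arises here, and $\int n\,d\bar k_2$ is finite for integrable $n$, so your concern about divergent factors is unnecessary.
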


\begin{proof}
Taking into
account (\ref{S2E7}) we can see, that in order to obtain the evolution
equation for $\widehat{F_{1,1}}$ we need to compute%
\begin{equation}
\widehat{F_{2,2}}\left( k_{1},\bar{k}_{2};\xi _{1}+\bar{k}_{2}-\bar{\xi}_{2},%
\bar{\xi}_{2};t\right) -\widehat{F_{2,2}}\left( k_{1}-\bar{k}_{2}+\bar{\xi}%
_{2},\bar{k}_{2};\xi _{1},\bar{\xi}_{2};t\right).  \label{S2E9}
\end{equation}

Using (\ref{S2E8}) we obtain

\begin{eqnarray}
\widehat{F_{2,2}}\left( k_{1},k_{2};\xi _{1},\xi _{1}\right) =\widehat{%
F_{1,1}}\left( k_{1};\xi _{1}\right) \widehat{F_{1,1}}\left( k_{2};\xi
_{2}\right) +\widehat{F_{1,1}}\left( k_{1};\xi _{2}\right) \widehat{F_{1,1}}%
\left( k_{2};\xi _{1}\right)+\nonumber\\
 +\widehat{G_{2,2}}\left( k_{1},k_{2};\xi
_{1},\xi _{1}\right)  \label{S2E8a}
\end{eqnarray}

Plugging this formula into (\ref{S2E9}) we obtain

\begin{eqnarray*}
&&\widehat{F_{2,2}}\left( k_{1},\bar{k}_{2};\xi _{1}+\bar{k}_{2}-\bar{\xi}%
_{2},\bar{\xi}_{2};t\right) -\widehat{F_{2,2}}\left( k_{1}-\bar{k}_{2}+\bar{%
\xi}_{2},\bar{k}_{2};\xi _{1},\bar{\xi}_{2};t\right)= \\
&&=\widehat{F_{1,1}}\left( k_{1};\xi _{1}+\bar{k}_{2}-\bar{\xi}_{2}\right) 
\widehat{F_{1,1}}\left( \bar{k}_{2};\bar{\xi}_{2}\right) +\widehat{F_{1,1}}%
\left( k_{1};\bar{\xi}_{2}\right) \widehat{F_{1,1}}\left( \bar{k}_{2};\xi
_{1}+\bar{k}_{2}-\bar{\xi}_{2}\right) - \\
&&-\widehat{F_{1,1}}\left( k_{1}-\bar{k}_{2}+\bar{\xi}_{2};\xi _{1}\right) 
\widehat{F_{1,1}}\left( \bar{k}_{2};\bar{\xi}_{2}\right) -\widehat{F_{1,1}}%
\left( k_{1}-\bar{k}_{2}+\bar{\xi}_{2};\bar{\xi}_{2}\right) \widehat{F_{1,1}}%
\left( \bar{k}_{2};\xi _{1}\right) + \\
&&+\widehat{G_{2,2}}\left( k_{1},\bar{k}_{2};\xi _{1}+\bar{k}_{2}-\bar{\xi}%
_{2},\bar{\xi}_{2};t\right) -\widehat{G_{2,2}}\left( k_{1}-\bar{k}_{2}+\bar{%
\xi}_{2},\bar{k}_{2};\xi _{1},\bar{\xi}_{2};t\right)
\end{eqnarray*}

We now recall that the invariance under translations implies (\ref{S3E1}).
Thus,  dropping the dependence on $t$ for the sake of simplicity.
\begin{eqnarray*}
&&\widehat{F_{1,1}}\left( k_{1};\xi _{1}+\bar{k}_{2}-\bar{\xi}_{2}\right) 
\widehat{F_{1,1}}\left( \bar{k}_{2};\bar{\xi}_{2}\right) +\widehat{F_{1,1}}%
\left( k_{1};\bar{\xi}_{2}\right) \widehat{F_{1,1}}\left( \bar{k}_{2};\xi
_{1}+\bar{k}_{2}-\bar{\xi}_{2}\right) - \\
&&-\widehat{F_{1,1}}\left( k_{1}-\bar{k}_{2}+\bar{\xi}_{2};\xi _{1}\right) 
\widehat{F_{1,1}}\left( \bar{k}_{2};\bar{\xi}_{2}\right) -\widehat{F_{1,1}}%
\left( k_{1}-\bar{k}_{2}+\bar{\xi}_{2};\bar{\xi}_{2}\right) \widehat{F_{1,1}}%
\left( \bar{k}_{2};\xi _{1}\right)= \\
&&=\left( 2\pi \right) ^{3}\Big[ n\left( k_{1}\right) n\left( \bar{k}%
_{2}\right) \delta \left( k_{1}-\xi _{1}-\bar{k}_{2}+\bar{\xi}_{2}\right)
\delta \left( \bar{k}_{2}-\bar{\xi}_{2}\right)+\\
&&\hskip 5.5cm  +n\left( k_{1}\right) n\left( 
\bar{k}_{2}\right) \delta \left( k_{1}-\bar{\xi}_{2}\right) \delta \left( 
\bar{\xi}_{2}-\xi _{1}\right) -  \\
&&-n\left( k_{1}-\bar{k}_{2}+\bar{\xi}_{2}\right) n\left( \bar{k}%
_{2}\right) \delta \left( k_{1}-\bar{k}_{2}+\bar{\xi}_{2}-\xi _{1}\right)
\delta \left( \bar{k}_{2}-\bar{\xi}_{2}\right)-\\
&&\hskip 2.5cm  -n\left( k_{1}-\bar{k}_{2}+%
\bar{\xi}_{2}\right) n\left( \bar{k}_{2}\right) \delta \left( k_{1}-\bar{k}%
_{2}+\bar{\xi}_{2}-\bar{\xi}_{2}\right) \delta \left( \bar{k}_{2}-\xi
_{1}\right) \Big] \\
&&=\left( 2\pi \right) ^{3}\Big[ n\left( k_{1}\right) n\left( \bar{k}%
_{2}\right) \delta \left( k_{1}-\xi _{1}\right) \delta \left( \bar{k}_{2}-%
\bar{\xi}_{2}\right) +n\left( k_{1}\right) n\left( \bar{k}_{2}\right) \delta
\left( k_{1}-\xi _{1}\right) \delta \left( \bar{\xi}_{2}-\xi _{1}\right)
-  \\
&&  -n\left( k_{1}\right) n\left( \bar{k}_{2}\right) \delta \left(
k_{1}-\xi _{1}\right) \delta \left( \bar{k}_{2}-\bar{\xi}_{2}\right)
-n\left( k_{1}-\bar{k}_{2}+\bar{\xi}_{2}\right) n\left( \bar{k}_{2}\right)
\delta \left( k_{1}-\xi _{1}\right) \delta \left( \bar{k}_{2}-\xi
_{1}\right) \Big] \\
&&=\left( 2\pi \right) ^{3}\Big[ n\left( k_{1}\right) n\left( \bar{k}%
_{2}\right) \delta \left( k_{1}-\xi _{1}\right) \delta \left( \bar{\xi}%
_{2}-\xi _{1}\right)-\\
&& -n\left( k_{1}-\bar{k}_{2}+\bar{\xi}_{2}\right) n\left( 
\bar{k}_{2}\right) \delta \left( k_{1}-\xi _{1}\right) \delta \left( \bar{k}%
_{2}-\xi _{1}\right) \Big] \\
&&=\left( 2\pi \right) ^{3}\left[ n\left( k_{1}\right) n\left( \bar{k}%
_{2}\right) \delta \left( k_{1}-\xi _{1}\right) \delta \left( \bar{\xi}%
_{2}-\xi _{1}\right) -n\left( \bar{\xi}_{2}\right) n\left( \bar{k}%
_{2}\right) \delta \left( k_{1}-\xi _{1}\right) \delta \left( \bar{k}%
_{2}-k_{1}\right) \right] \\
&&=\left( 2\pi \right) ^{3}\left[ n\left( k_{1}\right) n\left( \bar{k}%
_{2}\right) \delta \left( k_{1}-\xi _{1}\right) \delta \left( \bar{\xi}%
_{2}-\xi _{1}\right) -n\left( \bar{\xi}_{2}\right) n\left( k_{1}\right)
\delta \left( k_{1}-\xi _{1}\right) \delta \left( \bar{k}_{2}-k_{1}\right) %
\right]
\end{eqnarray*}

We now recall that we need to compute%
\[
\int_{\mathbb{R}^{3}}d\bar{k}_{2}\int_{\mathbb{R}^{3}}d\bar{\xi}_{2}\left[ 
\widehat{F_{2,2}}\left( k_{1},\bar{k}_{2};\xi _{1}+\bar{k}_{2}-\bar{\xi}_{2},%
\bar{\xi}_{2};t\right) -\widehat{F_{2,2}}\left( k_{1}-\bar{k}_{2}+\bar{\xi}%
_{2},\bar{k}_{2};\xi _{1},\bar{\xi}_{2};t\right) \right] 
\]

Then, the contribution of the terms containing $F_{1,1}$ to this integral
reduces to%
\begin{eqnarray*}
&&\left( 2\pi \right) ^{3}\int_{\mathbb{R}^{3}}d\bar{k}_{2}\int_{\mathbb{R}%
^{3}}d\bar{\xi}_{2}\Big[ n\left( k_{1}\right) n\left( \bar{k}_{2}\right)
\delta \left( k_{1}-\xi _{1}\right) \delta \left( \bar{\xi}_{2}-\xi
_{1}\right)-\\
&&\hskip 6cm  -n\left( \bar{\xi}_{2}\right) n\left( k_{1}\right) \delta \left(
k_{1}-\xi _{1}\right) \delta \left( \bar{k}_{2}-k_{1}\right) \Big] \\
&&=\left( 2\pi \right) ^{3}n\left( k_{1}\right) \delta \left( k_{1}-\xi
_{1}\right) \Big[ \int_{\mathbb{R}^{3}}n\left( \bar{k}_{2}\right) d\bar{k}%
_{2}\int_{\mathbb{R}^{3}}\delta \left( \bar{\xi}_{2}-\xi _{1}\right) d\bar{%
\xi}_{2}-\\
&&\hskip 6cm -\int_{\mathbb{R}^{3}}\delta \left( \bar{k}_{2}-k_{1}\right) d\bar{k}%
_{2}\int_{\mathbb{R}^{3}}n\left( \bar{\xi}_{2}\right) d\bar{\xi}_{2}\Big]
\\
&&=\left( 2\pi \right) ^{3}n\left( k_{1}\right) \delta \left( k_{1}-\xi
_{1}\right) \left[ \int_{\mathbb{R}^{3}}n\left( \bar{k}_{2}\right) d\bar{k}%
_{2}-\int_{\mathbb{R}^{3}}n\left( \bar{\xi}_{2}\right) d\bar{\xi}_{2}\right]
=0
\end{eqnarray*}

Therefore, the terms containing the functions $\widehat{F_{1,1}}$ in the
equation of $\widehat{F_{2,2}}$ vanish. It then follows that we can rewrite
the equation for $\widehat{F_{1,1}}$ as%
\begin{eqnarray}
&&i\partial _{t}\widehat{F_{1,1}}\left( k_{1};\xi _{1};t\right)=  \nonumber \\
&&=\frac{1}{2}\left( -\left\vert k_{1}\right\vert ^{2}+\left\vert \xi
_{1}\right\vert ^{2}\right) \widehat{F_{1,1}}\left( k_{1};\xi _{1};t\right) +\frac{\varepsilon }{\left( 2\pi \right) ^{3}}\times \nonumber \\
&&\times \int_{\mathbb{R}^{3}}d\bar{k}%
_{2}\int_{\mathbb{R}^{3}}d\bar{\xi}_{2}\left[ \widehat{G_{2,2}}\left( k_{1},%
\bar{k}_{2};\xi _{1}+\bar{k}_{2}-\bar{\xi}_{2},\bar{\xi}_{2};t\right) -%
\widehat{G_{2,2}}\left( k_{1}-\bar{k}_{2}+\bar{\xi}_{2},\bar{k}_{2};\xi _{1},%
\bar{\xi}_{2};t\right) \right]  \label{T1E7}
\end{eqnarray}

We now use (\ref{S3E1}) to prove%
\[
\left( -\left\vert k_{1}\right\vert ^{2}+\left\vert \xi _{1}\right\vert
^{2}\right) \widehat{F_{1,1}}\left( k_{1};\xi _{1};t\right) =\left( 2\pi
\right) ^{\frac{3}{2}}\delta \left( k_{1}-\xi _{1}\right) n\left(
k_{1},t\right) \left( -\left\vert k_{1}\right\vert ^{2}+\left\vert \xi
_{1}\right\vert ^{2}\right) =0 
\]

Then (\ref{T1E7}) reduces to%
\begin{eqnarray}
i\partial _{t}\widehat{F_{1,1}}\left( k_{1};\xi _{1};t\right) =\frac{%
\varepsilon }{\left( 2\pi \right) ^{3}}\int_{\mathbb{R}^{3}}d\bar{k}%
_{2}\int_{\mathbb{R}^{3}}d\bar{\xi}_{2}\Big[ \widehat{G_{2,2}}\left( k_{1},%
\bar{k}_{2};\xi _{1}+\bar{k}_{2}-\bar{\xi}_{2},\bar{\xi}_{2};t\right)- \nonumber \\ 
-\widehat{G_{2,2}}\left( k_{1}-\bar{k}_{2}+\bar{\xi}_{2},\bar{k}_{2};\xi _{1},%
\bar{\xi}_{2};t\right) \Big]  \label{S3E2a}
\end{eqnarray}%
and, also by (\ref{S3E1} ),
\begin{eqnarray}
i\left( 2\pi \right) ^{\frac{3}{2}}\delta \left( k_{1}-\xi _{1}\right)
\partial _{t}n\left( k_{1},t\right) =\frac{\varepsilon }{\left( 2\pi \right)
^{3}}\int_{\mathbb{R}^{3}}d\bar{k}_{2}\int_{\mathbb{R}^{3}}d\bar{\xi}_{2}%
\Big[ \widehat{G_{2,2}}\left( k_{1},\bar{k}_{2};\xi _{1}+\bar{k}_{2}-\bar{%
\xi}_{2},\bar{\xi}_{2};t\right)-\nonumber \\
 -\widehat{G_{2,2}}\left( k_{1}-\bar{k}_{2}+%
\bar{\xi}_{2},\bar{k}_{2};\xi _{1},\bar{\xi}_{2};t\right) \Big]
\label{S3E2}
\end{eqnarray}
\end{proof}

Notice that (\ref{S3E3}) implies that the right hand side contains a Dirac
mass $\delta \left( k_{1}-\xi _{1}\right) $ that will cancel out the
corresponding Dirac on the left hand side. We will examine this in detail later.
\subsection{An approximated equation for $\widehat G _{ 2,2 }$}

First, we derive an approximation for $\widehat{G_{2,2}}$.To 
this end we  consider the evolution equation for $\widehat{F_{2,2}}$ that it is given by (cf. (\ref{S2E3}))%
\begin{eqnarray}
&&i\partial _{t}\widehat{F_{2,2}}\left( k_{1},k_{2};\xi _{1},\xi
_{2};t\right)=  \nonumber \\
&&=\frac{1}{2}\left( -\sum_{j=1}^{2}\left\vert k_{j}\right\vert
^{2}+\sum_{j=1}^{2}\left\vert \xi _{j}\right\vert ^{2}\right) \widehat{%
F_{2,2}}\left( k_{1},k_{2} ;\xi _{1},\xi _{2};t\right)
+ \Xi\left( k_{1},k_{2};\xi _{1},\xi
_{2};t\right) \label{S3E4} 
\end{eqnarray}

\begin{eqnarray}
&&\Xi\left( k_{1},k_{2};\xi _{1},\xi
_{2};t\right)=-\frac{\varepsilon }{\left( 2\pi \right) ^{3}}\sum_{j=1}^{2}\int_{\mathbb{R%
}^{3}}d\bar{k}_{3}\int_{\mathbb{R}^{3}}d\bar{\xi}_{3}\widehat{F_{3,3}}\Big(
k_{1},k_{2},...k_{j-1},\nonumber \\
&&\hskip 3cm ,k_{j}-\bar{k}_{L+1}+\bar{\xi}_{M+1},k_{j+1},...,k_{3},%
\bar{k}_{L+1};\xi _{1},\xi _{2},\bar{\xi}_{3};t\Big) +  \nonumber \\
&&+\frac{\varepsilon }{\left( 2\pi \right) ^{3}}\sum_{j=1}^{2}\int_{\mathbb{R%
}^{3}}d\bar{k}_{3}\int_{\mathbb{R}^{3}}d\bar{\xi}_{3}\widehat{F_{3,3}}\Big(
k_{1},k_{2},\bar{k}_{3};\xi _{1},\xi _{2},...,\xi _{j-1},\nonumber \\
&&\hskip 4.9cm,\xi _{j}+\bar{k}%
_{L+1}-\bar{\xi}_{M+1},\xi _{j+1},...,\xi _{2},\bar{\xi}_{3};t\Big). 
\label{S3E4B}
\end{eqnarray}
We claim that if only the terms of higher order of magnitude in (\ref{S3E4}) are kept we obtain as an approximation of equation (\ref{S3E4}):
\begin{eqnarray}
&&i\partial _{t}\widehat{G_{2,2}}\left( k_{1},k_{2};\xi _{1},\xi
_{2};t\right)=  \nonumber  \\
&&=\frac{1}{2}\left( -\sum_{j=1}^{2}\left\vert k_{j}\right\vert
^{2}+\sum_{j=1}^{2}\left\vert \xi _{j}\right\vert ^{2}\right) \widehat{%
G_{2,2}}\left( k_{1},k_{2};\xi _{1},\xi _{2};t\right)+
 \Xi\left( k_{1},k_{2};\xi _{1},\xi
_{2};t\right) 
\label{S3E6}
\end{eqnarray}
Indeed, plugging first (
\ref{S2E8a}) into (\ref{S3E4}) we obtain%
\begin{eqnarray*}
&&i\left( \partial _{t}\widehat{F_{1,1}}\left( k_{1};\xi _{1}\right) \right) 
\widehat{F_{1,1}}\left( k_{2};\xi _{2}\right) +i\widehat{F_{1,1}}\left(
k_{1};\xi _{1}\right) \left( \partial _{t}\widehat{F_{1,1}}\left( k_{2};\xi
_{2}\right) \right) + \\
&&i\left( \partial _{t}\widehat{F_{1,1}}\left( k_{1};\xi _{2}\right) \right) 
\widehat{F_{1,1}}\left( k_{2};\xi _{1}\right) +i\widehat{F_{1,1}}\left(
k_{1};\xi _{2}\right) \left( \partial _{t}\widehat{F_{1,1}}\left( k_{2};\xi
_{1}\right) \right) + \\
&&+i\partial _{t}\widehat{G_{2,2}}\left( k_{1},k_{2};\xi _{1},\xi
_{2};t\right)= \\
&&=\frac{1}{2}\left( -\sum_{j=1}^{2}\left\vert k_{j}\right\vert
^{2}+\sum_{j=1}^{2}\left\vert \xi _{j}\right\vert ^{2}\right) \left[ 
\widehat{F_{1,1}}\left( k_{1};\xi _{1}\right) \widehat{F_{1,1}}\left(
k_{2};\xi _{2}\right) +\widehat{F_{1,1}}\left( k_{1};\xi _{2}\right) 
\widehat{F_{1,1}}\left( k_{2};\xi _{1}\right) \right] + \\
&&+\frac{1}{2}\left( -\sum_{j=1}^{2}\left\vert k_{j}\right\vert
^{2}+\sum_{j=1}^{2}\left\vert \xi _{j}\right\vert ^{2}\right) \widehat{%
G_{2,2}}\left( k_{1},k_{2};\xi _{1},\xi _{2};t\right)+
 \Xi\left( k_{1},k_{2};\xi _{1},\xi
_{2};t\right).
\end{eqnarray*}

Using (\ref{S3E1}) we obtain%
\begin{eqnarray*}
&&\frac{1}{2}\left( -\sum_{j=1}^{2}\left\vert k_{j}\right\vert
^{2}+\sum_{j=1}^{2}\left\vert \xi _{j}\right\vert ^{2}\right) \left[ 
\widehat{F_{1,1}}\left( k_{1};\xi _{1}\right) \widehat{F_{1,1}}\left(
k_{2};\xi _{2}\right) +\widehat{F_{1,1}}\left( k_{1};\xi _{2}\right) 
\widehat{F_{1,1}}\left( k_{2};\xi _{1}\right) \right]= \\
&&=\frac{\left( 2\pi \right) ^{3}}{2}\left( -\sum_{j=1}^{2}\left\vert
k_{j}\right\vert ^{2}+\sum_{j=1}^{2}\left\vert \xi _{j}\right\vert
^{2}\right) n\left( k_{1},t\right) n\left( k_{2},t\right) \left(
-\sum_{j=1}^{2}\left\vert k_{j}\right\vert ^{2}+\sum_{j=1}^{2}\left\vert \xi
_{j}\right\vert ^{2}\right) \cdot \\
&&\cdot \left[ \delta \left( k_{1}-\xi _{1}\right) \delta \left( k_{2}-\xi
_{2}\right) +\delta \left( k_{1}-\xi _{2}\right) \delta \left( k_{2}-\xi
_{1}\right) \right]=0
\end{eqnarray*}

Then%
\begin{eqnarray}
&&i\left( \partial _{t}\widehat{F_{1,1}}\left( k_{1};\xi _{1}\right) \right) 
\widehat{F_{1,1}}\left( k_{2};\xi _{2}\right) +i\widehat{F_{1,1}}\left(
k_{1};\xi _{1}\right) \left( \partial _{t}\widehat{F_{1,1}}\left( k_{2};\xi
_{2}\right) \right) +  \nonumber\\
&&i\left( \partial _{t}\widehat{F_{1,1}}\left( k_{1};\xi _{2}\right) \right) 
\widehat{F_{1,1}}\left( k_{2};\xi _{1}\right) +i\widehat{F_{1,1}}\left(
k_{1};\xi _{2}\right) \left( \partial _{t}\widehat{F_{1,1}}\left( k_{2};\xi
_{1}\right) \right) +  \nonumber \\
&&+i\partial _{t}\widehat{G_{2,2}}\left( k_{1},k_{2};\xi _{1},\xi
_{2};t\right)=  \nonumber \\
&&=\frac{1}{2}\left( -\sum_{j=1}^{2}\left\vert k_{j}\right\vert
^{2}+\sum_{j=1}^{2}\left\vert \xi _{j}\right\vert ^{2}\right) \widehat{%
G_{2,2}}\left( k_{1},k_{2} ;\xi _{1},\xi _{2};t\right)
+ \Xi\left( k_{1},k_{2};\xi _{1},\xi
_{2};t\right)\label{S3E5} 
\end{eqnarray}

We now remark that the terms $\partial _{t}\widehat{F_{1,1}}\left( k_{j};\xi
_{\ell }\right) $ are expected to be of the same  order of magnitude  than $\varepsilon  \left|
G _{ 2,2 }\right|$ due to (\ref{S3E2}). We will
check that the terms containing $\widehat{F_{3,3}}$ will give contributions
to $G _{ 2, 2 }$ of order $\varepsilon $ in (\ref{S3E5}). Therefore, the contribution
of the terms $\partial _{t}\widehat{F_{1,1}}\left( k_{j};\xi _{\ell }\right) 
$ can be expected to be negligible. We will then approximate (\ref{S3E5}) as%
\begin{eqnarray}
&&+i\partial _{t}\widehat{G_{2,2}}\left( k_{1},k_{2};\xi _{1},\xi
_{2};t\right)=  \nonumber \\
&&=\frac{1}{2}\left( -\sum_{j=1}^{2}\left\vert k_{j}\right\vert
^{2}+\sum_{j=1}^{2}\left\vert \xi _{j}\right\vert ^{2}\right) \widehat{%
G_{2,2}}\left( k_{1},k_{2};\xi _{1},\xi _{2};t\right)
+ \Xi\left( k_{1},k_{2};\xi _{1},\xi
_{2};t\right).\label{S3E6}
\end{eqnarray}
\subsection{Approximation of $\widehat{F_{3,3}}.$}

We now approximate $\widehat{F_{3,3}}.$ To this end, we will assume, that to
the leading order, this function is uncorrelated for all the relevant range
of times. Therefore, we assume that (\ref{S2E2}) is valid for times $t>0$
(using also the invariance under translations). We then assume the following
approximation%
\begin{equation}
\widehat{F_{3,3}}\left( k_{1},k_{2},k_{3};\xi _{1},\xi _{2},\xi
_{3};t\right) =\left( 2\pi \right) ^{\frac{9}{2}}\sum_{\sigma \in
S^{3}}\prod_{j=1}^{3}\left[ \delta \left( k_{j}-\xi _{\sigma \left( j\right)
}\right) n\left( k_{j},t\right) \right] \label{F3fact}
\end{equation}
or, in a more detailed form, dropping the dependence on $t,$ for the sake of
simplicity%
\begin{eqnarray}
&&\widehat{F_{3,3}}\left( k_{1},k_{2},k_{3};\xi _{1},\xi _{2},\xi
_{3};t\right)=  \label{S3E7} \\
&&=\left( 2\pi \right) ^{\frac{9}{2}}n\left( k_{1}\right) n\left(
k_{2}\right) n\left( k_{3}\right) \Bigg[ \delta \left( k_{1}-\xi _{1}\right)
\delta \left( k_{2}-\xi _{2}\right) \delta \left( k_{3}-\xi _{3}\right)+\nonumber\\
&&
+\delta \left( k_{1}-\xi _{1}\right) \delta \left( k_{2}-\xi _{3}\right)
\delta \left( k_{3}-\xi _{2}\right) +  \nonumber \\
&&+  \delta \left( k_{1}-\xi _{2}\right) \delta \left( k_{2}-\xi
_{1}\right) \delta \left( k_{3}-\xi _{3}\right) +\delta \left( k_{1}-\xi
_{2}\right) \delta \left( k_{2}-\xi _{3}\right) \delta \left( k_{3}-\xi
_{1}\right)   +  \nonumber \\
&&+  \delta \left( k_{1}-\xi _{3}\right) \delta \left( k_{2}-\xi
_{1}\right) \delta \left( k_{3}-\xi _{2}\right) +\delta \left( k_{1}-\xi
_{3}\right) \delta \left( k_{2}-\xi _{2}\right) \delta \left( k_{3}-\xi
_{1}\right) \Bigg]  \nonumber
\end{eqnarray}
Let us now denote by  $\widetilde {\Xi}\left( k_{1},k_{2};\xi _{1},\xi_{2};t\right)$ the approximation of $\Xi\left( k_{1},k_{2};\xi _{1},\xi_{2};t\right)$ approximating  $\widehat F _{ 3, 3 }$ by (\ref{F3fact}).
It is now necessary to compute in (\ref{S3E6}) the term
$\widetilde {\Xi}\left( k_{1},k_{2};\xi _{1},\xi_{2};t\right)$ defined in (\ref{S3E4B}).
\begin{proposition}
\begin{eqnarray}
&&\hskip -2cm \widetilde {\Xi}\left( k_{1},k_{2};\xi _{1},\xi_{2};t\right)=
2\cdot \left( 2\pi \right) ^{\frac{9}{2}}n\left( k_{1}\right) n\left(
k_{2}\right) \left( n\left( \xi _{2}\right) +n\left( \xi _{1}\right) \right)
\delta \left( k_{1}+k_{2}-\xi _{1}-\xi _{2}\right) -  \nonumber \\
&&\hskip 0.75cm-2\cdot \left( 2\pi \right) ^{\frac{9}{2}}\left[ \left( n\left(
k_{1}\right) +n\left( k_{2}\right) \right) n\left( \xi _{1}\right) n\left(
\xi _{2}\right) \delta \left( k_{1}+k_{2}-\xi _{2}-\xi _{1}\right) \right].
\end{eqnarray}
\end{proposition}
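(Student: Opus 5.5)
The plan is a direct substitution of the factorized ansatz (\ref{F3fact}), in its expanded form (\ref{S3E7}), into the four integrals that make up $\Xi$ in (\ref{S3E4B}). This gives two terms with a minus sign ($j=1,2$, where the shift sits in a $k$ slot) and two with a plus sign ($j=1,2$, where the shift sits in a $\xi$ slot). Each integrand becomes a sum over $\sigma\in S^{3}$ of six products of three Dirac masses. I will then integrate each product over $(\bar k_{3},\bar\xi_{3})\in\mathbb{R}^{6}$.

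The key bookkeeping step is to sort the six pairings by which slot $\bar k_{3}$ is paired with. Take the term with $j=1$ and shifted first argument $k_{1}-\bar k_{3}+\bar\xi_{3}$. There are two kinds of pairing.

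\emph{Pairings with $\bar k_{3}$ matched to $\bar\xi_{3}$.} These produce $\delta(\bar k_{3}-\bar\xi_{3})$. The shift then collapses, and what remains is $\int n(\bar k_{3})\,d\bar k_{3}\,\widehat{F_{1,1}}\widehat{F_{1,1}}$-type products supported on $\{k_{1}=\xi_{\cdot},\,k_{2}=\xi_{\cdot}\}$.

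\emph{Pairings with $\bar k_{3}$ matched to $\xi_{1}$ or $\xi_{2}$.} These fix $\bar k_{3}$, and one further delta fixes $\bar\xi_{3}$. Exactly one delta is left over, and after the substitutions it is $\delta(k_{1}+k_{2}-\xi_{1}-\xi_{2})$. For example, the pairing $k_{1}'\to\xi_{1}$, $k_{2}\to\bar\xi_{3}$, $\bar k_{3}\to\xi_{2}$ gives $n(k_{1}+\bar k_{3}... )$ evaluated as $n(\xi_{1})n(k_{2})n(\xi_{2})\,\delta(k_{1}+k_{2}-\xi_{1}-\xi_{2})$. The $n$ factor attached to the shifted variable is evaluated at the point selected by the deltas, and I must be careful that this is $n(\xi_{1})$ and not $n(k_{1})$.

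I then repeat the same classification for the other three terms. The $\xi$-shifted terms give, by the symmetry $k\leftrightarrow\xi$ with a sign flip, the products $n(k_{1})n(k_{2})n(\xi_{\cdot})\,\delta(k_{1}+k_{2}-\xi_{1}-\xi_{2})$.

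The next step is to show that all the ``disconnected'' contributions cancel between the $k$-shifted and $\xi$-shifted terms. These are the ones carrying the divergent-looking factor $\int n$ and two deltas $\delta(k_{i}-\xi_{l})\delta(k_{m}-\xi_{p})$. The mechanism is exactly the cancellation already carried out in the proof of the preceding Proposition for $\widehat{F_{1,1}}$. Each disconnected term from the $-$ group, say pairing $(k_{1}'\to\xi_{1},k_{2}\to\xi_{2})$ with $\bar k_{3}\to\bar\xi_{3}$, has a partner in the $+$ group with the same deltas and the same factor $n(k_{1})n(k_{2})\int n$. The partner is obtained by using $\delta(k_{1}-\xi_{1})$ to replace the evaluation point. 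There is also the mixed-type pairing where $\bar k_{3}$ goes to $\xi_{1}$ while $k_{1}'$ goes to $\bar\xi_{3}$; it yields a $\int n(\bar\xi_{3})d\bar\xi_{3}$ factor, and it is likewise matched. I expect the same $[\int n-\int n]=0$ structure here.

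Finally I collect the connected terms. Each of the four $\Xi$ pieces contributes two connected pairings, giving eight terms. The $-$ pieces yield $-(2\pi)^{9/2}(2\pi)^{-3}\cdot(\ldots)$; the powers of $2\pi$ combine using the normalizations, and I will keep the constant as it comes out. After summing over $j$ they give $-2(n(k_{1})+n(k_{2}))n(\xi_{1})n(\xi_{2})\delta$. The $+$ pieces yield $+2n(k_{1})n(k_{2})(n(\xi_{1})+n(\xi_{2}))\delta$. The factor $2$ comes from the two orderings of the pairings that produce the same monomial.

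The main obstacle is purely combinatorial: tracking the $6\times 4=24$ pairings, and correctly identifying which are connected and which cancel. I will organize this with a table indexed by the image of $\bar k_{3}$ and the image of the shifted variable.
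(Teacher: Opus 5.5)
Your proposal is correct and follows essentially the paper's route: substitute the factorized $\widehat{F_{3,3}}$ into the four integrals of (\ref{S3E4B}), integrate out $\bar k_{3},\bar\xi_{3}$, note that the terms carrying $\int n$ cancel between the $k$-shifted and $\xi$-shifted pieces (in fact already pairwise for each $j$), and collect the two momentum-conserving pairings from each piece. Your sorting by the image of $\bar k_{3}$ and your use of the $k\leftrightarrow\xi$ symmetry only streamline the paper's term-by-term computation (\ref{T2E1})--(\ref{T2E4}); note that the mixed pairings leave $\bar\xi_{3}$ free, contrary to your first description, though you account for them later. The one point to fix is the constant: $2(2\pi)^{9/2}$ is the value of the bare sum of integrals (\ref{S4E6}), i.e.\ without the prefactor $\varepsilon/(2\pi)^{3}$ of (\ref{S3E4B}), so no normalization will make the powers of $2\pi$ ``combine'' into it; this is a normalization slip in the statement, not in your argument.
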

\begin{proof}
 We first remark,
\begin{eqnarray}
&&\sum_{j=1}^{2}\left[ \widehat{F_{3,3}}\left( k_{1},k_{2},\bar{k}_{3};\xi
_{1},\xi _{2},...,\xi _{j-1},\xi _{j}+\bar{k}_{3}-\bar{\xi}_{3},\xi
_{j+1},...,\xi _{2},\bar{\xi}_{3};t\right) \right.\nonumber \\
&&\left. -\widehat{F_{3,3}}\left( k_{1},k_{2},...k_{j-1},k_{j}-\bar{k}_{3}+%
\bar{\xi}_{3},k_{j+1},...,k_{3},\bar{k}_{3};\xi _{1},\xi _{2},\bar{\xi}%
_{3};t\right) \right]\nonumber\\
&&=\widehat{F_{3,3}}\left( k_{1},k_{2},\bar{k}_{3};\xi _{1}+\bar{k}_{3}-\bar{%
\xi}_{3},\xi _{2},\bar{\xi}_{3};t\right) -\widehat{F_{3,3}}\left( k_{1}-\bar{%
k}_{3}+\bar{\xi}_{3},k_{2},\bar{k}_{3};\xi _{1},\xi _{2},\bar{\xi}%
_{3};t\right) +  \nonumber \\
&&+\widehat{F_{3,3}}\left( k_{1},k_{2},\bar{k}_{3};\xi _{1},\xi _{2}+\bar{k}%
_{3}-\bar{\xi}_{3},\bar{\xi}_{3};t\right) -\widehat{F_{3,3}}\left(
k_{1},k_{2}-\bar{k}_{3}+\bar{\xi}_{3},\bar{k}_{3};\xi _{1},\xi _{2},\bar{\xi}%
_{3};t\right) \label{T1E8}
\end{eqnarray}

We now plug (\ref{S3E7}) into this formula. We first have

\begin{eqnarray*}
&&\widehat{F_{3,3}}\left( k_{1},k_{2},\bar{k}_{3};\xi _{1}+\bar{k}_{3}-\bar{%
\xi}_{3},\xi _{2},\bar{\xi}_{3};t\right) =\\
&&=\left( 2\pi \right) ^{\frac{9}{2}}n\left( k_{1}\right) n\left(
k_{2}\right) n\left( \bar{k}_{3}\right) \Big[ \delta \left( k_{1}-\left(
\xi _{1}+\bar{k}_{3}-\bar{\xi}_{3}\right) \right) \delta \left( k_{2}-\xi
_{2}\right) \delta \left( \bar{k}_{3}-\bar{\xi}_{3}\right)+\\
&&\hskip 4cm  +\delta \left(
k_{1}-\left( \xi _{1}+\bar{k}_{3}-\bar{\xi}_{3}\right) \right) \delta \left(
k_{2}-\bar{\xi}_{3}\right) \delta \left( \bar{k}_{3}-\xi _{2}\right) + 
\\
&&\hskip 4cm   +\delta \left( k_{1}-\xi _{2}\right) \delta \left( k_{2}-\left( \xi
_{1}+\bar{k}_{3}-\bar{\xi}_{3}\right) \right) \delta \left( \bar{k}_{3}-\bar{%
\xi}_{3}\right) +\\
&&\hskip 4cm +\delta \left( k_{1}-\xi _{2}\right) \delta \left( k_{2}-%
\bar{\xi}_{3}\right) \delta \left( \bar{k}_{3}-\left( \xi _{1}+\bar{k}_{3}-%
\bar{\xi}_{3}\right) \right)   + \\
&&\hskip 4cm +  \delta \left( k_{1}-\bar{\xi}_{3}\right) \delta \left(
k_{2}-\left( \xi _{1}+\bar{k}_{3}-\bar{\xi}_{3}\right) \right) \delta \left( 
\bar{k}_{3}-\xi _{2}\right)+\\
&&\hskip 4cm  +\delta \left( k_{1}-\bar{\xi}_{3}\right) \delta
\left( k_{2}-\xi _{2}\right) \delta \left( \bar{k}_{3}-\left( \xi _{1}+\bar{k%
}_{3}-\bar{\xi}_{3}\right) \right) \Big]
\end{eqnarray*}

Then%
\begin{eqnarray*}
&&\widehat{F_{3,3}}\left( k_{1},k_{2},\bar{k}_{3};\xi _{1}+\bar{k}_{3}-\bar{%
\xi}_{3},\xi _{2},\bar{\xi}_{3};t\right) \\
&&=\left( 2\pi \right) ^{\frac{9}{2}}n\left( k_{1}\right) n\left(
k_{2}\right) n\left( \bar{k}_{3}\right) \Big[ \delta \left( k_{1}+\bar{\xi}%
_{3}-\xi _{1}-\bar{k}_{3}\right) \delta \left( k_{2}-\xi _{2}\right) \delta
\left( \bar{k}_{3}-\bar{\xi}_{3}\right)+\\
&& \hskip 4cm  +\delta \left( k_{1}-\xi _{1}-\bar{k}%
_{3}+\bar{\xi}_{3}\right) \delta \left( k_{2}-\bar{\xi}_{3}\right) \delta
\left( \bar{k}_{3}-\xi _{2}\right) +  \\
&&\hskip 4cm + \delta \left( k_{1}-\xi _{2}\right) \delta \left( k_{2}-\xi _{1}-%
\bar{k}_{3}+\bar{\xi}_{3}\right) \delta \left( \bar{k}_{3}-\bar{\xi}%
_{3}\right)+\\
&& \hskip 4cm  +\delta \left( k_{1}-\xi _{2}\right) \delta \left( k_{2}-\bar{\xi%
}_{3}\right) \delta \left( \bar{k}_{3}-\xi _{1}-\bar{k}_{3}+\bar{\xi}%
_{3}\right)   + \\
&&\hskip 4cm  + \delta \left( k_{1}-\bar{\xi}_{3}\right) \delta \left( k_{2}-\xi
_{1}-\bar{k}_{3}+\bar{\xi}_{3}\right) \delta \left( \bar{k}_{3}-\xi
_{2}\right) +\\
&&\hskip 4cm  +\delta \left( k_{1}-\bar{\xi}_{3}\right) \delta \left(
k_{2}-\xi _{2}\right) \delta \left( \bar{k}_{3}-\xi _{1}-\bar{k}_{3}+\bar{\xi%
}_{3}\right) \Big]
\end{eqnarray*}

Thus,
\begin{eqnarray}
&&\widehat{F_{3,3}}\left( k_{1},k_{2},\bar{k}_{3};\xi _{1}+\bar{k}_{3}-\bar{%
\xi}_{3},\xi _{2},\bar{\xi}_{3};t\right)= \nonumber \\
&&=\left( 2\pi \right) ^{\frac{9}{2}}n\left( k_{1}\right) n\left(
k_{2}\right) n\left( \bar{k}_{3}\right) \Big[ \delta \left( k_{1}-\xi
_{1}\right) \delta \left( k_{2}-\xi _{2}\right) \delta \left( \bar{k}_{3}-%
\bar{\xi}_{3}\right)+\nonumber\\
&&\hskip 5.2cm +\delta \left( k_{1}-\xi _{1}-\xi _{2}+k_{2}\right)
\delta \left( k_{2}-\bar{\xi}_{3}\right) \delta \left( \bar{k}_{3}-\xi
_{2}\right) +  \nonumber \\
&&\hskip 1.4cm+  \delta \left( k_{1}-\xi _{2}\right) \delta \left( k_{2}-\xi
_{1}\right) \delta \left( \bar{k}_{3}-\bar{\xi}_{3}\right) +\delta \left(
k_{1}-\xi _{2}\right) \delta \left( k_{2}-\bar{\xi}_{3}\right) \delta \left(
\xi _{1}-k_{2}\right) +  \nonumber \\
&&+  \delta \left( k_{1}-\bar{\xi}_{3}\right) \delta \left( k_{2}-\xi
_{1}-\xi _{2}+k_{1}\right) \delta \left( \bar{k}_{3}-\xi _{2}\right) +\delta
\left( k_{1}-\bar{\xi}_{3}\right) \delta \left( k_{2}-\xi _{2}\right) \delta
\left( k_{1}-\xi _{1}\right) \Big]   \label{S3E8}
\end{eqnarray}

We now have that the third term in (\ref{T1E8}) can be approximated as%
\begin{eqnarray*}
&&\widehat{F_{3,3}}\left( k_{1},k_{2},\bar{k}_{3};\xi _{1},\xi _{2}+\bar{k}%
_{3}-\bar{\xi}_{3},\bar{\xi}_{3};t\right)=\nonumber \\
&&=\left( 2\pi \right) ^{\frac{9}{2}}n\left( k_{1}\right) n\left(
k_{2}\right) n\left( \bar{k}_{3}\right) \Big[ \delta \left( k_{1}-\xi
_{1}\right) \delta \left( k_{2}-\left( \xi _{2}+\bar{k}_{3}-\bar{\xi}%
_{3}\right) \right) \delta \left( \bar{k}_{3}-\bar{\xi}_{3}\right)+\nonumber\\
&&\hskip 4.1cm +\delta
\left( k_{1}-\xi _{1}\right) \delta \left( k_{2}-\bar{\xi}_{3}\right) \delta
\left( \bar{k}_{3}-\left( \xi _{2}+\bar{k}_{3}-\bar{\xi}_{3}\right) \right)
+  \nonumber \\
&&\hskip 4.1cm+  \delta \left( k_{1}-\left( \xi _{2}+\bar{k}_{3}-\bar{\xi}%
_{3}\right) \right) \delta \left( k_{2}-\xi _{1}\right) \delta \left( \bar{k}%
_{3}-\bar{\xi}_{3}\right)+ \nonumber\\
&&\hskip 4.1cm +\delta \left( k_{1}-\left( \xi _{2}+\bar{k}_{3}-%
\bar{\xi}_{3}\right) \right) \delta \left( k_{2}-\bar{\xi}_{3}\right) \delta
\left( \bar{k}_{3}-\xi _{1}\right)   +  \nonumber \\
&&\hskip 4.1cm+  \delta \left( k_{1}-\bar{\xi}_{3}\right) \delta \left( k_{2}-\xi
_{1}\right) \delta \left( \bar{k}_{3}-\left( \xi _{2}+\bar{k}_{3}-\bar{\xi}%
_{3}\right) \right) + \nonumber\\
&&\hskip 4.1cm+\delta \left( k_{1}-\bar{\xi}_{3}\right) \delta \left(
k_{2}-\left( \xi _{2}+\bar{k}_{3}-\bar{\xi}_{3}\right) \right) \delta \left( 
\bar{k}_{3}-\xi _{1}\right) \Big]  \nonumber
\end{eqnarray*}

Then%
\begin{eqnarray}
&&\widehat{F_{3,3}}\left( k_{1},k_{2},\bar{k}_{3};\xi _{1},\xi _{2}+\bar{k}%
_{3}-\bar{\xi}_{3},\bar{\xi}_{3};t\right)= \nonumber \\ 
&&=\left( 2\pi \right) ^{\frac{9}{2}}n\left( k_{1}\right) n\left(
k_{2}\right) n\left( \bar{k}_{3}\right) \Big[ \delta \left( k_{1}-\xi
_{1}\right) \delta \left( k_{2}-\xi _{2}\right) \delta \left( \bar{k}_{3}-%
\bar{\xi}_{3}\right)+\nonumber\\
&& +\delta \left( k_{1}-\xi _{1}\right) \delta \left(
k_{2}-\xi _{2}\right) \delta \left( \bar{\xi}_{3}-\xi _{2}\right) +  
\nonumber \\
&&+  \delta \left( k_{1}-\xi _{2}\right) \delta \left( k_{2}-\xi
_{1}\right) \delta \left( \bar{k}_{3}-\bar{\xi}_{3}\right) +\delta \left(
k_{1}+k_{2}-\xi _{2}-\xi _{1}\right) \delta \left( k_{2}-\bar{\xi}%
_{3}\right) \delta \left( \bar{k}_{3}-\xi _{1}\right)  +  \nonumber \\
&&+  \delta \left( k_{1}-\xi _{2}\right) \delta \left( k_{2}-\xi
_{1}\right) \delta \left( \xi _{2}-\bar{\xi}_{3}\right) +\delta \left( k_{1}-%
\bar{\xi}_{3}\right) \delta \left( k_{2}+k_{1}-\xi _{2}-\xi _{1}\right)
\delta \left( \bar{k}_{3}-\xi _{1}\right) \Big]  \label{S3E9}
\end{eqnarray}

The second term in (\ref{T1E8}) can be approximated as

\begin{eqnarray*}
&&\widehat{F_{3,3}}\left( k_{1}-\bar{k}_{3}+\bar{\xi}_{3},k_{2},\bar{k}%
_{3};\xi _{1},\xi _{2},\bar{\xi}_{3};t\right)= \\
&&=\left( 2\pi \right) ^{\frac{9}{2}}n\left( k_{1}-\bar{k}_{3}+\bar{\xi}%
_{3}\right) n\left( k_{2}\right) n\left( \bar{k}_{3}\right) \Big[ \delta
\left( k_{1}-\bar{k}_{3}+\bar{\xi}_{3}-\xi _{1}\right) \delta \left(
k_{2}-\xi _{2}\right) \delta \left( \bar{k}_{3}-\bar{\xi}_{3}\right)+\\
&&\hskip 5.6cm +\delta
\left( k_{1}-\bar{k}_{3}+\bar{\xi}_{3}-\xi _{1}\right) \delta \left( k_{2}-%
\bar{\xi}_{3}\right) \delta \left( \bar{k}_{3}-\xi _{2}\right) +  \\
&&\hskip 5.6cm+  \delta \left( k_{1}-\bar{k}_{3}+\bar{\xi}_{3}-\xi _{2}\right)
\delta \left( k_{2}-\xi _{1}\right) \delta \left( \bar{k}_{3}-\bar{\xi}%
_{3}\right)+\\
&& \hskip 5.6cm+\delta \left( k_{1}-\bar{k}_{3}+\bar{\xi}_{3}-\xi _{2}\right)
\delta \left( k_{2}-\bar{\xi}_{3}\right) \delta \left( \bar{k}_{3}-\xi
_{1}\right)  + \\
&&\hskip 5.6cm+  \delta \left( k_{1}-\bar{k}_{3}+\bar{\xi}_{3}-\bar{\xi}_{3}\right)
\delta \left( k_{2}-\xi _{1}\right) \delta \left( \bar{k}_{3}-\xi
_{2}\right)+\\
&&\hskip 5.6cm +\delta \left( k_{1}-\bar{k}_{3}+\bar{\xi}_{3}-\bar{\xi}%
_{3}\right) \delta \left( k_{2}-\xi _{2}\right) \delta \left( \bar{k}%
_{3}-\xi _{1}\right) \Big]
\end{eqnarray*}

Then, after some simplifications we obtain%
\begin{eqnarray}
&&\widehat{F_{3,3}}\left( k_{1}-\bar{k}_{3}+\bar{\xi}_{3},k_{2},\bar{k}%
_{3};\xi _{1},\xi _{2},\bar{\xi}_{3};t\right)= \nonumber \\
&&=\left( 2\pi \right) ^{\frac{9}{2}}n\left( k_{1}-\bar{k}_{3}+\bar{\xi}%
_{3}\right) n\left( k_{2}\right) n\left( \bar{k}_{3}\right) \Big[ \delta
\left( k_{1}-\xi _{1}\right) \delta \left( k_{2}-\xi _{2}\right) \delta
\left( \bar{k}_{3}-\bar{\xi}_{3}\right)+\nonumber\\
&&\hskip 5.3cm  +\delta \left( k_{1}-\xi
_{2}+k_{2}-\xi _{1}\right) \delta \left( k_{2}-\bar{\xi}_{3}\right) \delta
\left( \bar{k}_{3}-\xi _{2}\right) +  \nonumber \\
&&+ \delta \left( k_{1}-\xi _{2}\right) \delta \left( k_{2}-\xi
_{1}\right) \delta \left( \bar{k}_{3}-\bar{\xi}_{3}\right) +\delta \left(
k_{1}-\xi _{1}+k_{2}-\xi _{2}\right) \delta \left( k_{2}-\bar{\xi}%
_{3}\right) \delta \left( \bar{k}_{3}-\xi _{1}\right)  +  \nonumber \\
&&\hskip 1.7cm  +  \delta \left( k_{1}-\xi _{2}\right) \delta \left( k_{2}-\xi
_{1}\right) \delta \left( \bar{k}_{3}-\xi _{2}\right) +\delta \left(
k_{1}-\xi _{1}\right) \delta \left( k_{2}-\xi _{2}\right) \delta \left( \bar{%
k}_{3}-\xi _{1}\right) \Big].   \label{S3E10}
\end{eqnarray}

Arguing in a similar manner, we can approximate the last term in (\ref{T1E8}%
) as%
\begin{eqnarray*}
&&\widehat{F_{3,3}}\left( k_{1},k_{2}-\bar{k}_{3}+\bar{\xi}_{3},\bar{k}%
_{3};\xi _{1},\xi _{2},\bar{\xi}_{3};t\right)= \\
&&=\left( 2\pi \right) ^{\frac{9}{2}}n\left( k_{1}\right) n\left( k_{2}-\bar{%
k}_{3}+\bar{\xi}_{3}\right) n\left( \bar{k}_{3}\right) \Big[ \delta \left(
k_{1}-\xi _{1}\right) \delta \left( k_{2}-\bar{k}_{3}+\bar{\xi}_{3}-\xi
_{2}\right) \delta \left( \bar{k}_{3}-\bar{\xi}_{3}\right) +\\
&&\hskip 6.9cm+\delta \left(
k_{1}-\xi _{1}\right) \delta \left( k_{2}-\bar{k}_{3}+\bar{\xi}_{3}-\bar{\xi}%
_{3}\right) \delta \left( \bar{k}_{3}-\xi _{2}\right) +  \\
&&+  \delta \left( k_{1}-\xi _{2}\right) \delta \left( k_{2}-\bar{k}%
_{3}+\bar{\xi}_{3}-\xi _{1}\right) \delta \left( \bar{k}_{3}-\bar{\xi}%
_{3}\right) +\delta \left( k_{1}-\xi _{2}\right) \delta \left( k_{2}-\bar{k}%
_{3}+\bar{\xi}_{3}-\bar{\xi}_{3}\right) \delta \left( \bar{k}_{3}-\xi
_{1}\right)  + \\
&&+ \delta \left( k_{1}-\bar{\xi}_{3}\right) \delta \left( k_{2}-\bar{k%
}_{3}+\bar{\xi}_{3}-\xi _{1}\right) \delta \left( \bar{k}_{3}-\xi
_{2}\right) +\delta \left( k_{1}-\bar{\xi}_{3}\right) \delta \left( k_{2}-%
\bar{k}_{3}+\bar{\xi}_{3}-\xi _{2}\right) \delta \left( \bar{k}_{3}-\xi
_{1}\right) \Big].
\end{eqnarray*}

Then%
\begin{eqnarray}
&&\widehat{F_{3,3}}\left( k_{1},k_{2}-\bar{k}_{3}+\bar{\xi}_{3},\bar{k}%
_{3};\xi _{1},\xi _{2},\bar{\xi}_{3};t\right) =\nonumber  \label{S3E11} \\
&&=\left( 2\pi \right) ^{\frac{9}{2}}n\left( k_{1}\right) n\left( k_{2}-\bar{%
k}_{3}+\bar{\xi}_{3}\right) n\left( \bar{k}_{3}\right) \Big[ \delta \left(
k_{1}-\xi _{1}\right) \delta \left( k_{2}-\xi _{2}\right) \delta \left( \bar{%
k}_{3}-\bar{\xi}_{3}\right)+\nonumber \\
&& \hskip 5.6cm +\delta \left( k_{1}-\xi _{1}\right) \delta
\left( k_{2}-\xi _{2}\right) \delta \left( \bar{k}_{3}-\xi _{2}\right)
+  \nonumber \\
&&\hskip 0.3cm+ \delta \left( k_{1}-\xi _{2}\right) \delta \left( k_{2}-\xi
_{1}\right) \delta \left( \bar{k}_{3}-\bar{\xi}_{3}\right) +\delta \left(
k_{1}-\xi _{2}\right) \delta \left( k_{2}-\xi _{1}\right) \delta \left( \bar{%
k}_{3}-\xi _{1}\right)  +  \nonumber \\
&&\hskip 0.3cm+  \delta \left( k_{1}-\bar{\xi}_{3}\right) \delta \left(
k_{2}+k_{1}-\xi _{2}-\xi _{1}\right) \delta \left( \bar{k}_{3}-\xi
_{2}\right)+\nonumber\\ 
&&\hskip 0.3cm+\delta \left( k_{1}-\bar{\xi}_{3}\right) \delta \left(
k_{2}+k_{1}-\xi _{1}-\xi _{2}\right) \delta \left( \bar{k}_{3}-\xi
_{1}\right) \Big].    \label{S3E11}  
\end{eqnarray}

We now compute%
\begin{eqnarray*}
&&\int_{\mathbb{R}^{3}}d\bar{k}_{3}\int_{\mathbb{R}^{3}}d\bar{\xi}_{3}\Big[ 
\widehat{F_{3,3}}\left( k_{1},k_{2},\bar{k}_{3};\xi _{1}+\bar{k}_{3}-\bar{\xi%
}_{3},\xi _{2},\bar{\xi}_{3};t\right) -\\
&&-\widehat{F_{3,3}}\left( k_{1}-\bar{k}%
_{3}+\bar{\xi}_{3},k_{2},\bar{k}_{3};\xi _{1},\xi _{2},\bar{\xi}%
_{3};t\right)  +  \widehat{F_{3,3}}\left( k_{1},k_{2},\bar{k}_{3};\xi _{1},\xi _{2}+%
\bar{k}_{3}-\bar{\xi}_{3},\bar{\xi}_{3};t\right)- \\
&&\hskip 6.2cm  -\widehat{F_{3,3}}\left(
k_{1},k_{2}-\bar{k}_{3}+\bar{\xi}_{3},\bar{k}_{3};\xi _{1},\xi _{2},\bar{\xi}%
_{3};t\right) \Big].
\end{eqnarray*}

Then, using (\ref{S3E8})%
\begin{eqnarray*}
&&\int_{\mathbb{R}^{3}}d\bar{k}_{3}\int_{\mathbb{R}^{3}}d\bar{\xi}_{3}%
\widehat{F_{3,3}}\left( k_{1},k_{2},\bar{k}_{3};\xi _{1}+\bar{k}_{3}-\bar{\xi%
}_{3},\xi _{2},\bar{\xi}_{3};t\right)= \\
&&=\left( 2\pi \right) ^{\frac{9}{2}}n\left( k_{1}\right) n\left(
k_{2}\right) \int_{\mathbb{R}^{3}}n\left( \bar{k}_{3}\right) d\bar{k}%
_{3}\int_{\mathbb{R}^{3}}d\bar{\xi}_{3}\Big[ \delta \left( k_{1}-\xi
_{1}\right) \delta \left( k_{2}-\xi _{2}\right) \delta \left( \bar{k}_{3}-%
\bar{\xi}_{3}\right) +\\
&&\hskip 5.2cm +\delta \left( k_{1}-\xi _{1}-\xi _{2}+k_{2}\right)
\delta \left( k_{2}-\bar{\xi}_{3}\right) \delta \left( \bar{k}_{3}-\xi
_{2}\right) + \\
&&\hskip 1.4cm+  \delta \left( k_{1}-\xi _{2}\right) \delta \left( k_{2}-\xi
_{1}\right) \delta \left( \bar{k}_{3}-\bar{\xi}_{3}\right) +\delta \left(
k_{1}-\xi _{2}\right) \delta \left( k_{2}-\bar{\xi}_{3}\right) \delta \left(
\xi _{1}-k_{2}\right) + \\
&&+  \delta \left( k_{1}-\bar{\xi}_{3}\right) \delta \left( k_{2}-\xi
_{1}-\xi _{2}+k_{1}\right) \delta \left( \bar{k}_{3}-\xi _{2}\right) +\delta
\left( k_{1}-\bar{\xi}_{3}\right) \delta \left( k_{2}-\xi _{2}\right) \delta
\left( k_{1}-\xi _{1}\right) \Big].
\end{eqnarray*}

Therefore, eliminating all the integrals that contain a Dirac mass in the
variable $\bar{\xi}_{3},$ or, more precisely, using $\int_{\mathbb{R}^{3}}d%
\bar{\xi}_{3}\delta \left( a-\bar{\xi}_{3}\right) =1,$ we obtain%
\begin{eqnarray*}
&&\int_{\mathbb{R}^{3}}d\bar{k}_{3}\int_{\mathbb{R}^{3}}d\bar{\xi}_{3}%
\widehat{F_{3,3}}\left( k_{1},k_{2},\bar{k}_{3};\xi _{1}+\bar{k}_{3}-\bar{\xi%
}_{3},\xi _{2},\bar{\xi}_{3};t\right)= \\
&&=\left( 2\pi \right) ^{\frac{9}{2}}n\left( k_{1}\right) n\left(
k_{2}\right) \int_{\mathbb{R}^{3}}n\left( \bar{k}_{3}\right) d\bar{k}_{3}%
\Big[ \delta \left( k_{1}-\xi _{1}\right) \delta \left( k_{2}-\xi
_{2}\right) +\delta \left( k_{1}+k_{2}-\xi _{1}-\xi _{2}\right) \delta
\left( \bar{k}_{3}-\xi _{2}\right) +  \\
&&\hskip 1.5cm +  \delta \left( k_{1}-\xi _{2}\right) \delta \left( k_{2}-\xi
_{1}\right) +\delta \left( k_{1}-\xi _{2}\right) \delta \left( \xi
_{1}-k_{2}\right) +\delta \left( k_{1}+k_{2}-\xi _{1}-\xi _{2}\right) \delta
\left( \bar{k}_{3}-\xi _{2}\right)+\\
&&\hskip 10.5cm +\delta \left( k_{2}-\xi _{2}\right)
\delta \left( k_{1}-\xi _{1}\right) \Big]
\end{eqnarray*}%
or%
\begin{eqnarray}
&&\int_{\mathbb{R}^{3}}d\bar{k}_{3}\int_{\mathbb{R}^{3}}d\bar{\xi}_{3}%
\widehat{F_{3,3}}\left( k_{1},k_{2},\bar{k}_{3};\xi _{1}+\bar{k}_{3}-\bar{\xi%
}_{3},\xi _{2},\bar{\xi}_{3};t\right)= \nonumber\\
&&=2\cdot \left( 2\pi \right) ^{\frac{9}{2}}n\left( k_{1}\right) n\left(
k_{2}\right) \int_{\mathbb{R}^{3}}n\left( \bar{k}_{3}\right) d\bar{k}_{3}%
\Big[ \delta \left( k_{1}-\xi _{1}\right) \delta \left( k_{2}-\xi
_{2}\right) +\nonumber\\
&&\hskip 0.5cm +\delta \left( k_{1}+k_{2}-\xi _{1}-\xi _{2}\right) \delta
\left( \bar{k}_{3}-\xi _{2}\right) +\delta \left( k_{1}-\xi _{2}\right)
\delta \left( k_{2}-\xi _{1}\right) \Big].   \label{S4E1} 
\end{eqnarray}

As a matter of fact, the terms containing $\delta \left( k_{1}+k_{2}-\xi
_{1}-\xi _{2}\right) $ can be further simplified. This will be seen later.

We now compute%
\[
\int_{\mathbb{R}^{3}}d\bar{k}_{3}\int_{\mathbb{R}^{3}}d\bar{\xi}_{3}\widehat{%
F_{3,3}}\left( k_{1},k_{2},\bar{k}_{3};\xi _{1},\xi _{2}+\bar{k}_{3}-\bar{\xi%
}_{3},\bar{\xi}_{3};t\right) 
\]

Using (\ref{S3E9})%
\begin{eqnarray*}
&&\int_{\mathbb{R}^{3}}d\bar{k}_{3}\int_{\mathbb{R}^{3}}d\bar{\xi}_{3}%
\widehat{F_{3,3}}\left( k_{1},k_{2},\bar{k}_{3};\xi _{1},\xi _{2}+\bar{k}%
_{3}-\bar{\xi}_{3},\bar{\xi}_{3};t\right) =\\
&&=\left( 2\pi \right) ^{\frac{9}{2}}n\left( k_{1}\right) n\left(
k_{2}\right) \int_{\mathbb{R}^{3}}n\left( \bar{k}_{3}\right) d\bar{k}%
_{3}\int_{\mathbb{R}^{3}}d\bar{\xi}_{3}\Big[ \delta \left( k_{1}-\xi
_{1}\right) \delta \left( k_{2}-\xi _{2}\right) \delta \left( \bar{k}_{3}-%
\bar{\xi}_{3}\right) +\\
&&\hskip 6.9cm+\delta \left( k_{1}-\xi _{1}\right) \delta \left(
k_{2}-\xi _{2}\right) \delta \left( \bar{\xi}_{3}-\xi _{2}\right) + \\
&&+ \delta \left( k_{1}-\xi _{2}\right) \delta \left( k_{2}-\xi
_{1}\right) \delta \left( \bar{k}_{3}-\bar{\xi}_{3}\right) +\delta \left(
k_{1}+k_{2}-\xi _{2}-\xi _{1}\right) \delta \left( k_{2}-\bar{\xi}%
_{3}\right) \delta \left( \bar{k}_{3}-\xi _{1}\right)   + \\
&&\hskip 0.2cm+ \delta \left( k_{1}-\xi _{2}\right) \delta \left( k_{2}-\xi
_{1}\right) \delta \left( \xi _{2}-\bar{\xi}_{3}\right) +\delta \left( k_{1}-%
\bar{\xi}_{3}\right) \delta \left( k_{2}+k_{1}-\xi _{2}-\xi _{1}\right)
\delta \left( \bar{k}_{3}-\xi _{1}\right) \Big].
\end{eqnarray*}

Then%
\begin{eqnarray}
&&\int_{\mathbb{R}^{3}}d\bar{k}_{3}\int_{\mathbb{R}^{3}}d\bar{\xi}_{3}%
\widehat{F_{3,3}}\left( k_{1},k_{2},\bar{k}_{3};\xi _{1},\xi _{2}+\bar{k}%
_{3}-\bar{\xi}_{3},\bar{\xi}_{3};t\right)=  \nonumber \\
&&=\left( 2\pi \right) ^{\frac{9}{2}}n\left( k_{1}\right) n\left(
k_{2}\right) \int_{\mathbb{R}^{3}}n\left( \bar{k}_{3}\right) d\bar{k}_{3}%
\Big[ \delta \left( k_{1}-\xi _{1}\right) \delta \left( k_{2}-\xi
_{2}\right) +\delta \left( k_{1}-\xi _{1}\right) \delta \left( k_{2}-\xi
_{2}\right) +   \nonumber \\
&&+  \delta \left( k_{1}-\xi _{2}\right) \delta \left( k_{2}-\xi
_{1}\right) +\delta \left( k_{1}+k_{2}-\xi _{2}-\xi _{1}\right) \delta
\left( \bar{k}_{3}-\xi _{1}\right) +\delta \left( k_{1}-\xi _{2}\right)
\delta \left( k_{2}-\xi _{1}\right)+\nonumber\\
&&\hskip 7.5cm  +\delta \left( k_{2}+k_{1}-\xi _{2}-\xi
_{1}\right) \delta \left( \bar{k}_{3}-\xi _{1}\right) \Big]  \nonumber \\
&=&2\cdot \left( 2\pi \right) ^{\frac{9}{2}}n\left( k_{1}\right) n\left(
k_{2}\right) \int_{\mathbb{R}^{3}}n\left( \bar{k}_{3}\right) d\bar{k}_{3}%
\Big[ \delta \left( k_{1}-\xi _{1}\right) \delta \left( k_{2}-\xi
_{2}\right) +\delta \left( k_{1}-\xi _{2}\right) \delta \left( k_{2}-\xi
_{1}\right)+\nonumber\\
&&\hskip 7cm  +\delta \left( k_{2}+k_{1}-\xi _{2}-\xi _{1}\right) \delta
\left( \bar{k}_{3}-\xi _{1}\right) \Big]    \label{S4E2}
\end{eqnarray}

We now compute%
\begin{eqnarray*}
&&\int_{\mathbb{R}^{3}}d\bar{k}_{3}\int_{\mathbb{R}^{3}}d\bar{\xi}_{3}%
\widehat{F_{3,3}}\left( k_{1}-\bar{k}_{3}+\bar{\xi}_{3},k_{2},\bar{k}%
_{3};\xi _{1},\xi _{2},\bar{\xi}_{3};t\right) =\\
&&=\left( 2\pi \right) ^{\frac{9}{2}}n\left( k_{2}\right) \int_{\mathbb{R}%
^{3}}n\left( \bar{k}_{3}\right) d\bar{k}_{3}\int_{\mathbb{R}^{3}}d\bar{\xi}%
_{3}n\left( k_{1}-\bar{k}_{3}+\bar{\xi}_{3}\right) \cdot \\
&&\cdot \left[ \delta \left( k_{1}-\xi _{1}\right) \delta \left( k_{2}-\xi
_{2}\right) \delta \left( \bar{k}_{3}-\bar{\xi}_{3}\right) +\delta \left(
k_{1}-\xi _{2}+k_{2}-\xi _{1}\right) \delta \left( k_{2}-\bar{\xi}%
_{3}\right) \delta \left( \bar{k}_{3}-\xi _{2}\right) +\right. \\
&&+\left. \delta \left( k_{1}-\xi _{2}\right) \delta \left( k_{2}-\xi
_{1}\right) \delta \left( \bar{k}_{3}-\bar{\xi}_{3}\right) +\delta \left(
k_{1}-\xi _{1}+k_{2}-\xi _{2}\right) \delta \left( k_{2}-\bar{\xi}%
_{3}\right) \delta \left( \bar{k}_{3}-\xi _{1}\right) \right. + \\
&&+\left. \delta \left( k_{1}-\xi _{2}\right) \delta \left( k_{2}-\xi
_{1}\right) \delta \left( \bar{k}_{3}-\xi _{2}\right) +\delta \left(
k_{1}-\xi _{1}\right) \delta \left( k_{2}-\xi _{2}\right) \delta \left( \bar{%
k}_{3}-\xi _{1}\right) \right]
\end{eqnarray*}

We can now simplify the integrals with the form $\int_{\mathbb{R}^{3}}\left(
\cdot \cdot \cdot \right) d\bar{k}_{3}$ using the Dirac masses containing
terms like $\delta \left( \bar{k}_{3}-\cdot \right) .$ We then obtain%
\begin{eqnarray}
&&\int_{\mathbb{R}^{3}}d\bar{k}_{3}\int_{\mathbb{R}^{3}}d\bar{\xi}_{3}%
\widehat{F_{3,3}}\left( k_{1}-\bar{k}_{3}+\bar{\xi}_{3},k_{2},\bar{k}%
_{3};\xi _{1},\xi _{2},\bar{\xi}_{3};t\right)=  \nonumber \\
&&=\left( 2\pi \right) ^{\frac{9}{2}}n\left( k_{2}\right) \int_{\mathbb{R}%
^{3}}d\bar{\xi}_{3}\Big[ n\left( \bar{\xi}_{3}\right) n\left( k_{1}\right)
\delta \left( k_{1}-\xi _{1}\right) \delta \left( k_{2}-\xi _{2}\right)+\nonumber \\
&&+n\left( \xi _{2}\right) n\left( k_{1}+k_{2}-\xi _{2}\right) \delta \left(
k_{1}+k_{2}-\xi _{2}-\xi _{1}\right) \delta \left( k_{2}-\bar{\xi}%
_{3}\right) +   \nonumber \\
&&+  n\left( \bar{\xi}_{3}\right) n\left( k_{1}\right) \delta \left(
k_{1}-\xi _{2}\right) \delta \left( k_{2}-\xi _{1}\right) +\nonumber \\
&&+n\left( \xi
_{1}\right) n\left( k_{1}+k_{2}-\xi _{1}\right) \delta \left(
k_{1}+k_{2}-\xi _{1}-\xi _{2}\right) \delta \left( k_{2}-\bar{\xi}%
_{3}\right)   +  \nonumber \\
&&+  n\left( \xi _{2}\right) n\left( k_{1}-\xi _{2}+\bar{\xi}%
_{3}\right) \delta \left( k_{1}-\xi _{2}\right) \delta \left( k_{2}-\xi
_{1}\right)+\nonumber \\
&& +n\left( \xi _{1}\right) n\left( k_{1}-\xi _{1}+\bar{\xi}%
_{3}\right) \delta \left( k_{1}-\xi _{1}\right) \delta \left( k_{2}-\xi
_{2}\right) \Big].  \label{S4E3}
\end{eqnarray}

Then%
\begin{eqnarray}
&&\int_{\mathbb{R}^{3}}d\bar{k}_{3}\int_{\mathbb{R}^{3}}d\bar{\xi}_{3}%
\widehat{F_{3,3}}\left( k_{1}-\bar{k}_{3}+\bar{\xi}_{3},k_{2},\bar{k}%
_{3};\xi _{1},\xi _{2},\bar{\xi}_{3};t\right)= \\
&&=\left( 2\pi \right) ^{\frac{9}{2}}n\left( k_{1}\right) n\left(
k_{2}\right) \left( \int_{\mathbb{R}^{3}}d\bar{\xi}_{3}n\left( \bar{\xi}%
_{3}\right) \right) \delta \left( k_{1}-\xi _{1}\right) \delta \left(
k_{2}-\xi _{2}\right)+\nonumber \\
&& +\left( 2\pi \right) ^{\frac{9}{2}}n\left(
k_{2}\right) n\left( \xi _{2}\right) n\left( k_{1}+k_{2}-\xi _{2}\right)
\delta \left( k_{1}+k_{2}-\xi _{2}-\xi _{1}\right) +  \nonumber \\
&&+\left( 2\pi \right) ^{\frac{9}{2}}\left( \int_{\mathbb{R}^{3}}d\bar{\xi}%
_{3}n\left( \bar{\xi}_{3}\right) \right) n\left( k_{1}\right) n\left(
k_{2}\right) \delta \left( k_{1}-\xi _{2}\right) \delta \left( k_{2}-\xi
_{1}\right)+\nonumber \\
&& +\left( 2\pi \right) ^{\frac{9}{2}}n\left( \xi _{1}\right)
n\left( k_{2}\right) n\left( k_{1}+k_{2}-\xi _{1}\right) \delta \left(
k_{1}+k_{2}-\xi _{1}-\xi _{2}\right) +  \nonumber \\
&&+\left( 2\pi \right) ^{\frac{9}{2}}n\left( k_{2}\right) n\left( \xi
_{2}\right) \left( \int_{\mathbb{R}^{3}}d\bar{\xi}_{3}n\left( k_{1}-\xi _{2}+%
\bar{\xi}_{3}\right) \right) \delta \left( k_{1}-\xi _{2}\right) \delta
\left( k_{2}-\xi _{1}\right)+\nonumber\\
&& +\left( 2\pi \right) ^{\frac{9}{2}}n\left(
k_{2}\right) n\left( \xi _{1}\right) \left( \int_{\mathbb{R}^{3}}d\bar{\xi}%
_{3}n\left( k_{1}-\xi _{1}+\bar{\xi}_{3}\right) \right) \delta \left(
k_{1}-\xi _{1}\right) \delta \left( k_{2}-\xi _{2}\right)  \nonumber
\end{eqnarray}%
or, simplifying%
\begin{eqnarray}
&&\int_{\mathbb{R}^{3}}d\bar{k}_{3}\int_{\mathbb{R}^{3}}d\bar{\xi}_{3}%
\widehat{F_{3,3}}\left( k_{1}-\bar{k}_{3}+\bar{\xi}_{3},k_{2},\bar{k}%
_{3};\xi _{1},\xi _{2},\bar{\xi}_{3};t\right)=  \nonumber \\
&&=\left( 2\pi \right) ^{\frac{9}{2}}n\left( k_{1}\right) n\left(
k_{2}\right) \left( \int_{\mathbb{R}^{3}}d\bar{\xi}_{3}n\left( \bar{\xi}%
_{3}\right) \right) \delta \left( k_{1}-\xi _{1}\right) \delta \left(
k_{2}-\xi _{2}\right) +\nonumber\\
&&+\left( 2\pi \right) ^{\frac{9}{2}}n\left(
k_{2}\right) n\left( \xi _{2}\right) n\left( \xi _{1}\right) \delta \left(
k_{1}+k_{2}-\xi _{2}-\xi _{1}\right) +  \nonumber \\
&&+\left( 2\pi \right) ^{\frac{9}{2}}\left( \int_{\mathbb{R}^{3}}d\bar{\xi}%
_{3}n\left( \bar{\xi}_{3}\right) \right) n\left( k_{1}\right) n\left(
k_{2}\right) \delta \left( k_{1}-\xi _{2}\right) \delta \left( k_{2}-\xi
_{1}\right)+\nonumber \\
&& +\left( 2\pi \right) ^{\frac{9}{2}}n\left( \xi _{1}\right)
n\left( k_{2}\right) n\left( \xi _{2}\right) \delta \left( k_{1}+k_{2}-\xi
_{1}-\xi _{2}\right) +  \nonumber \\
&&+\left( 2\pi \right) ^{\frac{9}{2}}n\left( k_{1}\right) n\left(
k_{2}\right) \left( \int_{\mathbb{R}^{3}}d\bar{\xi}_{3}n\left( \bar{\xi}%
_{3}\right) \right) \delta \left( k_{1}-\xi _{2}\right) \delta \left(
k_{2}-\xi _{1}\right)+\nonumber \\
&& +\left( 2\pi \right) ^{\frac{9}{2}}n\left(
k_{1}\right) n\left( k_{2}\right) \left( \int_{\mathbb{R}^{3}}d\bar{\xi}%
_{3}n\left( \bar{\xi}_{3}\right) \right) \delta \left( k_{1}-\xi _{1}\right)
\delta \left( k_{2}-\xi _{2}\right)  \nonumber \\
&=&2\cdot \left( 2\pi \right) ^{\frac{9}{2}}n\left( k_{2}\right) \Bigg[
n\left( k_{1}\right) \int_{\mathbb{R}^{3}}d\bar{\xi}_{3}n\left( \bar{\xi}%
_{3}\right) \delta \left( k_{1}-\xi _{1}\right) \delta \left( k_{2}-\xi
_{2}\right) +\nonumber\\
&&+n\left( \xi _{1}\right) n\left( \xi _{2}\right) \delta \left(
k_{1}+k_{2}-\xi _{2}-\xi _{1}\right)  + n\left( k_{1}\right) \int_{\mathbb{R}^{3}}d\bar{\xi}_{3}n\left( 
\bar{\xi}_{3}\right) \delta \left( k_{1}-\xi _{2}\right) \delta \left(
k_{2}-\xi _{1}\right) \Bigg].  \label{S4E4}
\end{eqnarray}

We now compute%
\begin{eqnarray*}
&&\int_{\mathbb{R}^{3}}d\bar{k}_{3}\int_{\mathbb{R}^{3}}d\bar{\xi}_{3}%
\widehat{F_{3,3}}\left( k_{1},k_{2}-\bar{k}_{3}+\bar{\xi}_{3},\bar{k}%
_{3};\xi _{1},\xi _{2},\bar{\xi}_{3};t\right) =\\
&&=\left( 2\pi \right) ^{\frac{9}{2}}\int_{\mathbb{R}^{3}}d\bar{k}_{3}\int_{%
\mathbb{R}^{3}}d\bar{\xi}_{3}n\left( k_{1}\right) n\left( k_{2}-\bar{k}_{3}+%
\bar{\xi}_{3}\right) n\left( \bar{k}_{3}\right)\times \nonumber \\
&& \Big[ \delta \left(
k_{1}-\xi _{1}\right) \delta \left( k_{2}-\xi _{2}\right) \delta \left( \bar{%
k}_{3}-\bar{\xi}_{3}\right) +\delta \left( k_{1}-\xi _{1}\right) \delta
\left( k_{2}-\xi _{2}\right) \delta \left( \bar{k}_{3}-\xi _{2}\right)
+ \\
&&+  \delta \left( k_{1}-\xi _{2}\right) \delta \left( k_{2}-\xi
_{1}\right) \delta \left( \bar{k}_{3}-\bar{\xi}_{3}\right) +\delta \left(
k_{1}-\xi _{2}\right) \delta \left( k_{2}-\xi _{1}\right) \delta \left( \bar{%
k}_{3}-\xi _{1}\right)   + \\
&&+  \delta \left( k_{1}-\bar{\xi}_{3}\right) \delta \left(
k_{2}+k_{1}-\xi _{2}-\xi _{1}\right) \delta \left( \bar{k}_{3}-\xi
_{2}\right)+\\
&& +\delta \left( k_{1}-\bar{\xi}_{3}\right) \delta \left(
k_{2}+k_{1}-\xi _{1}-\xi _{2}\right) \delta \left( \bar{k}_{3}-\xi
_{1}\right) \Big]
\end{eqnarray*}

Then, computing the integral $\int_{\mathbb{R}^{3}}d\bar{k}_{3}$ we obtain%
\begin{eqnarray*}
&&\int_{\mathbb{R}^{3}}d\bar{k}_{3}\int_{\mathbb{R}^{3}}d\bar{\xi}_{3}%
\widehat{F_{3,3}}\left( k_{1},k_{2}-\bar{k}_{3}+\bar{\xi}_{3},\bar{k}%
_{3};\xi _{1},\xi _{2},\bar{\xi}_{3};t\right)= \\
&&=\left( 2\pi \right) ^{\frac{9}{2}}n\left( k_{1}\right) \int_{\mathbb{R}%
^{3}}d\bar{\xi}_{3}\Big[ n\left( k_{2}\right) n\left( \bar{\xi}_{3}\right)
\delta \left( k_{1}-\xi _{1}\right) \delta \left( k_{2}-\xi _{2}\right)+ \\
&&+n\left( k_{2}-\xi _{2}+\bar{\xi}_{3}\right) n\left( \xi _{2}\right) \delta
\left( k_{1}-\xi _{1}\right) \delta \left( k_{2}-\xi _{2}\right) + \\
&&+  n\left( k_{2}\right) n\left( \bar{\xi}_{3}\right) \delta \left(
k_{1}-\xi _{2}\right) \delta \left( k_{2}-\xi _{1}\right) +\\
&&+n\left( k_{2}-\xi
_{1}+\bar{\xi}_{3}\right) n\left( \xi _{1}\right) \delta \left( k_{1}-\xi
_{2}\right) \delta \left( k_{2}-\xi _{1}\right)  + \\
&&+  n\left( k_{2}-\xi _{2}+\bar{\xi}_{3}\right) n\left( \xi
_{2}\right) \delta \left( k_{1}-\bar{\xi}_{3}\right) \delta \left(
k_{2}+k_{1}-\xi _{2}-\xi _{1}\right) \\
&&+n\left( k_{2}-\xi _{1}+\bar{\xi}%
_{3}\right) n\left( \xi _{1}\right) \delta \left( k_{1}-\bar{\xi}_{3}\right)
\delta \left( k_{2}+k_{1}-\xi _{1}-\xi _{2}\right) \Big]
\end{eqnarray*}%
Therefore%
\begin{eqnarray*}
&&\int_{\mathbb{R}^{3}}d\bar{k}_{3}\int_{\mathbb{R}^{3}}d\bar{\xi}_{3}%
\widehat{F_{3,3}}\left( k_{1},k_{2}-\bar{k}_{3}+\bar{\xi}_{3},\bar{k}%
_{3};\xi _{1},\xi _{2},\bar{\xi}_{3};t\right) =\\
&&=\left( 2\pi \right) ^{\frac{9}{2}}n\left( k_{1}\right) \int_{\mathbb{R}%
^{3}}d\bar{\xi}_{3}\Big[ n\left( k_{2}\right) n\left( \bar{\xi}_{3}\right)
\delta \left( k_{1}-\xi _{1}\right) \delta \left( k_{2}-\xi _{2}\right)+\\
&&+n\left( k_{2}-\xi _{2}+\bar{\xi}_{3}\right) n\left( \xi _{2}\right) \delta
\left( k_{1}-\xi _{1}\right) \delta \left( k_{2}-\xi _{2}\right) +  \\
&&+  n\left( k_{2}\right) n\left( \bar{\xi}_{3}\right) \delta \left(
k_{1}-\xi _{2}\right) \delta \left( k_{2}-\xi _{1}\right) +n\left( k_{2}-\xi
_{1}+\bar{\xi}_{3}\right) n\left( \xi _{1}\right) \delta \left( k_{1}-\xi
_{2}\right) \delta \left( k_{2}-\xi _{1}\right)  + \\
&&+  n\left( k_{2}-\xi _{2}+k_{1}\right) n\left( \xi _{2}\right) \delta
\left( k_{1}-\bar{\xi}_{3}\right) \delta \left( k_{2}+k_{1}-\xi _{2}-\xi
_{1}\right)+\\
&& +n\left( k_{2}-\xi _{1}+k_{1}\right) n\left( \xi _{1}\right)
\delta \left( k_{1}-\bar{\xi}_{3}\right) \delta \left( k_{2}+k_{1}-\xi
_{1}-\xi _{2}\right) \Big]
\end{eqnarray*}%
or%
\begin{eqnarray*}
&&\int_{\mathbb{R}^{3}}d\bar{k}_{3}\int_{\mathbb{R}^{3}}d\bar{\xi}_{3}%
\widehat{F_{3,3}}\left( k_{1},k_{2}-\bar{k}_{3}+\bar{\xi}_{3},\bar{k}%
_{3};\xi _{1},\xi _{2},\bar{\xi}_{3};t\right)= \\
&&=\left( 2\pi \right) ^{\frac{9}{2}}n\left( k_{1}\right) \int_{\mathbb{R}%
^{3}}d\bar{\xi}_{3}\Big[ n\left( k_{2}\right) n\left( \bar{\xi}_{3}\right)
\delta \left( k_{1}-\xi _{1}\right) \delta \left( k_{2}-\xi _{2}\right)+\\
&&+n\left( \bar{\xi}_{3}\right) n\left( \xi _{2}\right) \delta \left(
k_{1}-\xi _{1}\right) \delta \left( k_{2}-\xi _{2}\right) +n\left( k_{2}\right) n\left( \bar{\xi}_{3}\right) \delta \left(
k_{1}-\xi _{2}\right) \delta \left( k_{2}-\xi _{1}\right) +  \\
&&+ n\left( \bar{\xi}%
_{3}\right) n\left( \xi _{1}\right) \delta \left( k_{1}-\xi _{2}\right)
\delta \left( k_{2}-\xi _{1}\right)  +n\left( \xi _{1}\right) n\left( \xi _{2}\right) \delta \left(
k_{1}-\bar{\xi}_{3}\right) \delta \left( k_{2}+k_{1}-\xi _{2}-\xi
_{1}\right) + \\
&&+n\left( \xi _{2}\right) n\left( \xi _{1}\right) \delta \left(
k_{1}-\bar{\xi}_{3}\right) \delta \left( k_{2}+k_{1}-\xi _{1}-\xi
_{2}\right) \Big].
\end{eqnarray*}

Computing the integrals in $\bar{\xi}_{3}$ we obtain%
\begin{eqnarray}
&&\int_{\mathbb{R}^{3}}d\bar{k}_{3}\int_{\mathbb{R}^{3}}d\bar{\xi}_{3}%
\widehat{F_{3,3}}\left( k_{1},k_{2}-\bar{k}_{3}+\bar{\xi}_{3},\bar{k}%
_{3};\xi _{1},\xi _{2},\bar{\xi}_{3};t\right)= \nonumber \\
&&=\left( 2\pi \right) ^{\frac{9}{2}}n\left( k_{1}\right) \Bigg[ n\left(
k_{2}\right) \left( \int_{\mathbb{R}^{3}}d\bar{\xi}_{3}n\left( \bar{\xi}%
_{3}\right) \right) \delta \left( k_{1}-\xi _{1}\right) \delta \left(
k_{2}-\xi _{2}\right)+\nonumber \\ 
&&+\left( \int_{\mathbb{R}^{3}}d\bar{\xi}_{3}n\left( 
\bar{\xi}_{3}\right) \right) n\left( k_{2}\right) \delta \left( k_{1}-\xi
_{1}\right) \delta \left( k_{2}-\xi _{2}\right) +  \nonumber \\
&&+  n\left( k_{2}\right) \left( \int_{\mathbb{R}^{3}}d\bar{\xi}%
_{3}n\left( \bar{\xi}_{3}\right) \right) \delta \left( k_{1}-\xi _{2}\right)
\delta \left( k_{2}-\xi _{1}\right)+\nonumber \\
&& +\left( \int_{\mathbb{R}^{3}}d\bar{\xi}%
_{3}n\left( \bar{\xi}_{3}\right) \right) n\left( k_{2}\right) \delta \left(
k_{1}-\xi _{2}\right) \delta \left( k_{2}-\xi _{1}\right)  + 
\nonumber \\
&&+  n\left( \xi _{1}\right) n\left( \xi _{2}\right) \delta \left(
k_{2}+k_{1}-\xi _{2}-\xi _{1}\right) +n\left( \xi _{2}\right) n\left( \xi
_{1}\right) \delta \left( k_{2}+k_{1}-\xi _{1}-\xi _{2}\right) \Bigg] 
\nonumber \\
&=&2\cdot \left( 2\pi \right) ^{\frac{9}{2}}n\left( k_{1}\right) \Bigg[
n\left( k_{2}\right) \left( \int_{\mathbb{R}^{3}}d\bar{\xi}_{3}n\left( \bar{%
\xi}_{3}\right) \right) \delta \left( k_{1}-\xi _{1}\right) \delta \left(
k_{2}-\xi _{2}\right)+\nonumber \\
&& +n\left( k_{2}\right) \left( \int_{\mathbb{R}^{3}}d%
\bar{\xi}_{3}n\left( \bar{\xi}_{3}\right) \right) \delta \left( k_{1}-\xi
_{2}\right) \delta \left( k_{2}-\xi _{1}\right) +   \nonumber \\
&&+  n\left( \xi _{1}\right) n\left( \xi _{2}\right) \delta \left(
k_{1}+k_{2}-\xi _{1}-\xi _{2}\right) \Bigg].   \label{S4E5}
\end{eqnarray}

Summarizing, we can collect the results obtained in (\ref{S4E1}), (\ref{S4E2}%
), (\ref{S4E4}), (\ref{S4E5}) as%
\begin{eqnarray}
&&\int_{\mathbb{R}^{3}}d\bar{k}_{3}\int_{\mathbb{R}^{3}}d\bar{\xi}_{3}%
\widehat{F_{3,3}}\left( k_{1},k_{2},\bar{k}_{3};\xi _{1}+\bar{k}_{3}-\bar{\xi%
}_{3},\xi _{2},\bar{\xi}_{3};t\right)=  \nonumber \\
&&=2\cdot \left( 2\pi \right) ^{\frac{9}{2}}n\left( k_{1}\right) n\left(
k_{2}\right) \int_{\mathbb{R}^{3}}n\left( \bar{k}_{3}\right) d\bar{k}_{3}%
\Big[ \delta \left( k_{1}-\xi _{1}\right) \delta \left( k_{2}-\xi
_{2}\right) +\nonumber\\
&&+\delta \left( k_{1}+k_{2}-\xi _{1}-\xi _{2}\right) \delta
\left( \bar{k}_{3}-\xi _{2}\right) +\delta \left( k_{1}-\xi _{2}\right)
\delta \left( k_{2}-\xi _{1}\right) \Big]  \label{T2E1}
\end{eqnarray}

\begin{eqnarray}
&&\int_{\mathbb{R}^{3}}d\bar{k}_{3}\int_{\mathbb{R}^{3}}d\bar{\xi}_{3}%
\widehat{F_{3,3}}\left( k_{1},k_{2},\bar{k}_{3};\xi _{1},\xi _{2}+\bar{k}%
_{3}-\bar{\xi}_{3},\bar{\xi}_{3};t\right) = \nonumber \\
&&=2\cdot \left( 2\pi \right) ^{\frac{9}{2}}n\left( k_{1}\right) n\left(
k_{2}\right) \int_{\mathbb{R}^{3}}n\left( \bar{k}_{3}\right) d\bar{k}_{3}%
\Big[ \delta \left( k_{1}-\xi _{1}\right) \delta \left( k_{2}-\xi
_{2}\right)+\nonumber\\
&& +\delta \left( k_{1}-\xi _{2}\right) \delta \left( k_{2}-\xi
_{1}\right) +\delta \left( k_{2}+k_{1}-\xi _{2}-\xi _{1}\right) \delta
\left( \bar{k}_{3}-\xi _{1}\right) \Big]  \label{T2E2}
\end{eqnarray}

\begin{eqnarray}
&&\int_{\mathbb{R}^{3}}d\bar{k}_{3}\int_{\mathbb{R}^{3}}d\bar{\xi}_{3}%
\widehat{F_{3,3}}\left( k_{1}-\bar{k}_{3}+\bar{\xi}_{3},k_{2},\bar{k}%
_{3};\xi _{1},\xi _{2},\bar{\xi}_{3};t\right) = \nonumber \\
&&=2\cdot \left( 2\pi \right) ^{\frac{9}{2}}n\left( k_{2}\right) \Bigg[
n\left( k_{1}\right) \int_{\mathbb{R}^{3}}d\bar{\xi}_{3}n\left( \bar{\xi}%
_{3}\right) \delta \left( k_{1}-\xi _{1}\right) \delta \left( k_{2}-\xi
_{2}\right)+\nonumber\\
&& +n\left( \xi _{1}\right) n\left( \xi _{2}\right) \delta \left(
k_{1}+k_{2}-\xi _{2}-\xi _{1}\right)  +  n\left( k_{1}\right) \int_{\mathbb{R}^{3}}d\bar{\xi}_{3}n\left( 
\bar{\xi}_{3}\right) \delta \left( k_{1}-\xi _{2}\right) \delta \left(
k_{2}-\xi _{1}\right) \Bigg]  \label{T2E3}
\end{eqnarray}%
\begin{eqnarray}
&&\int_{\mathbb{R}^{3}}d\bar{k}_{3}\int_{\mathbb{R}^{3}}d\bar{\xi}_{3}%
\widehat{F_{3,3}}\left( k_{1},k_{2}-\bar{k}_{3}+\bar{\xi}_{3},\bar{k}%
_{3};\xi _{1},\xi _{2},\bar{\xi}_{3};t\right)=  \nonumber \\
&&=2\cdot \left( 2\pi \right) ^{\frac{9}{2}}n\left( k_{1}\right) \Bigg[
n\left( k_{2}\right) \left( \int_{\mathbb{R}^{3}}d\bar{\xi}_{3}n\left( \bar{%
\xi}_{3}\right) \right) \delta \left( k_{1}-\xi _{1}\right) \delta \left(
k_{2}-\xi _{2}\right)+\nonumber\\
&& +n\left( k_{2}\right) \left( \int_{\mathbb{R}^{3}}d%
\bar{\xi}_{3}n\left( \bar{\xi}_{3}\right) \right) \delta \left( k_{1}-\xi
_{2}\right) \delta \left( k_{2}-\xi _{1}\right) +  n\left( \xi _{1}\right) n\left( \xi _{2}\right) \delta \left(
k_{1}+k_{2}-\xi _{1}-\xi _{2}\right) \Bigg].  \label{T2E4}
\end{eqnarray}

\bigskip

We now collect the terms contained in (\ref{T1E8}). More precisely, we
recall that we need to compute the integrals $\int_{\mathbb{R}^{3}}d\bar{k}%
_{3}\int_{\mathbb{R}^{3}}d\bar{\xi}_{3}$ of%
\begin{eqnarray*}
&&\widehat{F_{3,3}}\left( k_{1},k_{2},\bar{k}_{3};\xi _{1}+\bar{k}_{3}-\bar{%
\xi}_{3},\xi _{2},\bar{\xi}_{3};t\right) -\widehat{F_{3,3}}\left( k_{1}-\bar{%
k}_{3}+\bar{\xi}_{3},k_{2},\bar{k}_{3};\xi _{1},\xi _{2},\bar{\xi}%
_{3};t\right) + \\
&&+\widehat{F_{3,3}}\left( k_{1},k_{2},\bar{k}_{3};\xi _{1},\xi _{2}+\bar{k}%
_{3}-\bar{\xi}_{3},\bar{\xi}_{3};t\right) -\widehat{F_{3,3}}\left(
k_{1},k_{2}-\bar{k}_{3}+\bar{\xi}_{3},\bar{k}_{3};\xi _{1},\xi _{2},\bar{\xi}%
_{3};t\right)
\end{eqnarray*}%
Therefore, using (\ref{T2E1})-(\ref{T2E4}) we obtain that the integral of%
\begin{eqnarray*}
&&\int_{\mathbb{R}^{3}}d\bar{k}_{3}\int_{\mathbb{R}^{3}}d\bar{\xi}_{3}\Big[ 
\widehat{F_{3,3}}\left( k_{1},k_{2},\bar{k}_{3};\xi _{1}+\bar{k}_{3}-\bar{\xi%
}_{3},\xi _{2},\bar{\xi}_{3};t\right)-\nonumber\\
&&-\widehat{F_{3,3}}\left( k_{1}-\bar{k}%
_{3}+\bar{\xi}_{3},k_{2},\bar{k}_{3};\xi _{1},\xi _{2},\bar{\xi}%
_{3};t\right)+  \widehat{F_{3,3}}\left( k_{1},k_{2},\bar{k}_{3};\xi _{1},\xi _{2}+%
\bar{k}_{3}-\bar{\xi}_{3},\bar{\xi}_{3};t\right)-\nonumber   \\
&&\hskip 6.1cm  -\widehat{F_{3,3}}\left(
k_{1},k_{2}-\bar{k}_{3}+\bar{\xi}_{3},\bar{k}_{3};\xi _{1},\xi _{2},\bar{\xi}%
_{3};t\right) \Big]
\end{eqnarray*}%
is given by%
\begin{eqnarray*}
&&2\cdot \left( 2\pi \right) ^{\frac{9}{2}}n\left( k_{1}\right) n\left(
k_{2}\right) \int_{\mathbb{R}^{3}}n\left( \bar{k}_{3}\right) d\bar{k}_{3}%
\Big[ \delta \left( k_{1}-\xi _{1}\right) \delta \left( k_{2}-\xi
_{2}\right) +\\
&&+\delta \left( k_{1}+k_{2}-\xi _{1}-\xi _{2}\right) \delta
\left( \bar{k}_{3}-\xi _{2}\right) +\delta \left( k_{1}-\xi _{2}\right)
\delta \left( k_{2}-\xi _{1}\right) \Big] + \\
&&+2\cdot \left( 2\pi \right) ^{\frac{9}{2}}n\left( k_{1}\right) n\left(
k_{2}\right) \int_{\mathbb{R}^{3}}n\left( \bar{k}_{3}\right) d\bar{k}_{3}%
\Big[ \delta \left( k_{1}-\xi _{1}\right) \delta \left( k_{2}-\xi
_{2}\right)+\\
&& +\delta \left( k_{1}-\xi _{2}\right) \delta \left( k_{2}-\xi
_{1}\right) +\delta \left( k_{1}+k_{2}-\xi _{2}-\xi _{1}\right) \delta
\left( \bar{k}_{3}-\xi _{1}\right) \Big] - \\
&&-2\cdot \left( 2\pi \right) ^{\frac{9}{2}}n\left( k_{2}\right) \Bigg[
n\left( k_{1}\right) \int_{\mathbb{R}^{3}}d\bar{\xi}_{3}n\left( \bar{\xi}%
_{3}\right) \delta \left( k_{1}-\xi _{1}\right) \delta \left( k_{2}-\xi
_{2}\right)+\\
&& +n\left( \xi _{1}\right) n\left( \xi _{2}\right) \delta \left(
k_{1}+k_{2}-\xi _{2}-\xi _{1}\right) +n\left( k_{1}\right) \left( \int_{%
\mathbb{R}^{3}}d\bar{\xi}_{3}n\left( \bar{\xi}_{3}\right) \right) \delta
\left( k_{1}-\xi _{2}\right) \delta \left( k_{2}-\xi _{1}\right) \Bigg] \\
&&-2\cdot \left( 2\pi \right) ^{\frac{9}{2}}n\left( k_{1}\right) \Bigg[
n\left( k_{2}\right) \left( \int_{\mathbb{R}^{3}}d\bar{\xi}_{3}n\left( \bar{%
\xi}_{3}\right) \right) \delta \left( k_{1}-\xi _{1}\right) \delta \left(
k_{2}-\xi _{2}\right)+\\
&& +n\left( k_{2}\right) \left( \int_{\mathbb{R}^{3}}d%
\bar{\xi}_{3}n\left( \bar{\xi}_{3}\right) \right) \delta \left( k_{1}-\xi
_{2}\right) \delta \left( k_{2}-\xi _{1}\right) +n\left( \xi _{1}\right)
n\left( \xi _{2}\right) \delta \left( k_{1}+k_{2}-\xi _{1}-\xi _{2}\right) %
\Bigg].
\end{eqnarray*}
Some immediate cancellations that yield
\begin{eqnarray*}
&&2\cdot \left( 2\pi \right) ^{\frac{9}{2}}n\left( k_{1}\right) n\left(
k_{2}\right) \int_{\mathbb{R}^{3}}n\left( \bar{k}_{3}\right) d\bar{k}_{3}%
\Big[ \delta \left( k_{1}+k_{2}-\xi _{1}-\xi _{2}\right) \delta \left( \bar{%
k}_{3}-\xi _{2}\right)+\\
&&\hskip 8cm  +\delta \left( k_{1}-\xi _{2}\right) \delta \left(
k_{2}-\xi _{1}\right) \Big] + \\
&&+2\cdot \left( 2\pi \right) ^{\frac{9}{2}}n\left( k_{1}\right) n\left(
k_{2}\right) \int_{\mathbb{R}^{3}}n\left( \bar{k}_{3}\right) d\bar{k}_{3}%
\Big[ \delta \left( k_{1}-\xi _{2}\right) \delta \left( k_{2}-\xi
_{1}\right) +\\
&&\hskip 7cm +\delta \left( k_{1}+k_{2}-\xi _{2}-\xi _{1}\right) \delta
\left( \bar{k}_{3}-\xi _{1}\right) \Big] - \\
&&-2\cdot \left( 2\pi \right) ^{\frac{9}{2}}n\left( k_{2}\right) \Bigg[
n\left( \xi _{1}\right) n\left( \xi _{2}\right) \delta \left(
k_{1}+k_{2}-\xi _{2}-\xi _{1}\right)+\\
&&\hskip 6cm  +n\left( k_{1}\right) \left( \int_{%
\mathbb{R}^{3}}d\bar{\xi}_{3}n\left( \bar{\xi}_{3}\right) \right) \delta
\left( k_{1}-\xi _{2}\right) \delta \left( k_{2}-\xi _{1}\right) \Bigg] \\
&&-2\cdot \left( 2\pi \right) ^{\frac{9}{2}}n\left( k_{1}\right) \Bigg[
n\left( k_{2}\right) \left( \int_{\mathbb{R}^{3}}d\bar{\xi}_{3}n\left( \bar{%
\xi}_{3}\right) \right) \delta \left( k_{1}-\xi _{2}\right) \delta \left(
k_{2}-\xi _{1}\right)+\\
&&\hskip 8cm +n\left( \xi _{1}\right) n\left( \xi _{2}\right)
\delta \left( k_{1}+k_{2}-\xi _{1}-\xi _{2}\right) \Bigg].
\end{eqnarray*}

We can now combine several terms, to obtain%
\begin{eqnarray*}
&&2\cdot \left( 2\pi \right) ^{\frac{9}{2}}n\left( k_{1}\right) n\left(
k_{2}\right) \int_{\mathbb{R}^{3}}n\left( \bar{k}_{3}\right) d\bar{k}_{3}%
\Big[ \delta \left( k_{1}+k_{2}-\xi _{1}-\xi _{2}\right) \left( \delta
\left( \bar{k}_{3}-\xi _{2}\right) +\delta \left( \bar{k}_{3}-\xi
_{1}\right) \right)+\\
&&\hskip 9cm  +2\delta \left( k_{1}-\xi _{2}\right) \delta \left(
k_{2}-\xi _{1}\right) \Big] - \\
&&-2\cdot \left( 2\pi \right) ^{\frac{9}{2}}\Big[ \left( n\left(
k_{1}\right) +n\left( k_{2}\right) \right) n\left( \xi _{1}\right) n\left(
\xi _{2}\right) \delta \left( k_{1}+k_{2}-\xi _{2}-\xi _{1}\right)+\\
&&\hskip 4cm   +2n\left(
k_{1}\right) n\left( k_{2}\right) \left( \int_{\mathbb{R}^{3}}d\bar{\xi}%
_{3}n\left( \bar{\xi}_{3}\right) \right) \delta \left( k_{1}-\xi _{2}\right)
\delta \left( k_{2}-\xi _{1}\right) \Big].
\end{eqnarray*}

Cancelling the second and fourth we finally obtain%
\begin{eqnarray*}
&&2\cdot \left( 2\pi \right) ^{\frac{9}{2}}n\left( k_{1}\right) n\left(
k_{2}\right) \int_{\mathbb{R}^{3}}n\left( \bar{k}_{3}\right) d\bar{k}_{3}%
\left[ \delta \left( k_{1}+k_{2}-\xi _{1}-\xi _{2}\right) \left( \delta
\left( \bar{k}_{3}-\xi _{2}\right) +\delta \left( \bar{k}_{3}-\xi
_{1}\right) \right) \right] - \\
&&-2\cdot \left( 2\pi \right) ^{\frac{9}{2}}\left[ \left( n\left(
k_{1}\right) +n\left( k_{2}\right) \right) n\left( \xi _{1}\right) n\left(
\xi _{2}\right) \delta \left( k_{1}+k_{2}-\xi _{2}-\xi _{1}\right) \right]
\end{eqnarray*}%
or, equivalently%
\begin{eqnarray*}
&&2\cdot \left( 2\pi \right) ^{\frac{9}{2}}n\left( k_{1}\right) n\left(
k_{2}\right) \left( n\left( \xi _{2}\right) +n\left( \xi _{1}\right) \right)
\delta \left( k_{1}+k_{2}-\xi _{1}-\xi _{2}\right) - \\
&&-2\cdot \left( 2\pi \right) ^{\frac{9}{2}}\left[ \left( n\left(
k_{1}\right) +n\left( k_{2}\right) \right) n\left( \xi _{1}\right) n\left(
\xi _{2}\right) \delta \left( k_{1}+k_{2}-\xi _{2}-\xi _{1}\right) \right]
\end{eqnarray*}

Notice that the form of these terms, in particular the form of the Dirac
masses $\delta \left( k_{1}+k_{2}-\xi _{1}-\xi _{2}\right) ,$ implies the
conservation of the momentum variable. These terms give the expected form of
a kinetic equation for the function $n\left( k\right) .$ More precisely, we
have obtained%
\begin{eqnarray}
&&\int_{\mathbb{R}^{3}}d\bar{k}_{3}\int_{\mathbb{R}^{3}}d\bar{\xi}_{3}\left[ 
\widehat{F_{3,3}}\left( k_{1},k_{2},\bar{k}_{3};\xi _{1}+\bar{k}_{3}-\bar{\xi%
}_{3},\xi _{2},\bar{\xi}_{3};t\right) -\widehat{F_{3,3}}\left( k_{1}-\bar{k}%
_{3}+\bar{\xi}_{3},k_{2},\bar{k}_{3};\xi _{1},\xi _{2},\bar{\xi}%
_{3};t\right) +\right.  \nonumber \\
&&+\left. \widehat{F_{3,3}}\left( k_{1},k_{2},\bar{k}_{3};\xi _{1},\xi _{2}+%
\bar{k}_{3}-\bar{\xi}_{3},\bar{\xi}_{3};t\right) -\widehat{F_{3,3}}\left(
k_{1},k_{2}-\bar{k}_{3}+\bar{\xi}_{3},\bar{k}_{3};\xi _{1},\xi _{2},\bar{\xi}%
_{3};t\right) \right]  \nonumber \\
&=&2\cdot \left( 2\pi \right) ^{\frac{9}{2}}n\left( k_{1}\right) n\left(
k_{2}\right) \left( n\left( \xi _{2}\right) +n\left( \xi _{1}\right) \right)
\delta \left( k_{1}+k_{2}-\xi _{1}-\xi _{2}\right) -  \nonumber \\
&&-2\cdot \left( 2\pi \right) ^{\frac{9}{2}}\left[ \left( n\left(
k_{1}\right) +n\left( k_{2}\right) \right) n\left( \xi _{1}\right) n\left(
\xi _{2}\right) \delta \left( k_{1}+k_{2}-\xi _{2}-\xi _{1}\right) \right]
\label{S4E6}
\end{eqnarray}
\end{proof}

A new approximation of $\widehat{G_{2,2}}\left(
k_{1},k_{2};\xi _{1},\xi _{2};t\right) $ immediately follows from (\ref{S4E6}),  using Duhamel's formula and equation (\ref{S3E6}), \begin{eqnarray*}
&&i\partial _{t}\widehat{G_{2,2}}\left( k_{1},k_{2};\xi _{1},\xi
_{2};t\right)= \\
&&=\frac{1}{2}\left( -\sum_{j=1}^{2}\left\vert k_{j}\right\vert
^{2}+\sum_{j=1}^{2}\left\vert \xi _{j}\right\vert ^{2}\right) \widehat{%
G_{2,2}}\left( k_{1},k_{2},...,k_{L};\xi _{1},\xi _{2},...,\xi _{M};t\right)
+ \\
&&+2\varepsilon \cdot \left( 2\pi \right) ^{\frac{9}{2}}n\left(
k_{1},t\right) n\left( k_{2},t\right) \left( n\left( \xi _{2},t\right)
+n\left( \xi _{1},t\right) \right) \delta \left( k_{1}+k_{2}-\xi _{1}-\xi
_{2}\right) - \\
&&-2\varepsilon \cdot \left( 2\pi \right) ^{\frac{9}{2}}\left( n\left(
k_{1},t\right) +n\left( k_{2},t\right) \right) n\left( \xi _{1},t\right)
n\left( \xi _{2},t\right) \delta \left( k_{1}+k_{2}-\xi _{2}-\xi _{1}\right)
\end{eqnarray*}%
where we write again the dependences of $n\left( k,t\right) $ in $t.$ Using
now that $\widehat{G_{2,2}}\left( k_{1},k_{2};\xi _{1},\xi _{2};0\right) =0$
we obtain%
\begin{eqnarray}
&&\hskip -1cm \widehat{G_{2,2}}\left( k_{1},k_{2};\xi _{1},\xi _{2};t\right) =\frac{%
2\cdot \left( 2\pi \right) ^{\frac{9}{2}}\varepsilon }{i}\delta \left(
k_{1}+k_{2}-\xi _{1}-\xi _{2}\right)\times \nonumber\\
&&\hskip -1cm\times  \int_{0}^{t}\exp \left( \frac{i\left(
t-s\right) }{2}\left( \sum_{j=1}^{2}\left\vert k_{j}\right\vert
^{2}-\sum_{j=1}^{2}\left\vert \xi _{j}\right\vert ^{2}\right) \right) \times 
\nonumber \\
&&\hskip -1cm\times \left[ n\left( k_{1},s\right) n\left( k_{2},s\right) \left( n\left(
\xi _{2},s\right) +n\left( \xi _{1},s\right) \right) -\left( n\left(
k_{1},s\right) +n\left( k_{2},s\right) \right) n\left( \xi _{1},s\right)
n\left( \xi _{2},s\right) \right]ds.  \label{S4E7} 
\end{eqnarray}

\subsection{The non-Markovian equation} 
In this sub Section, using (\ref{S3E2}), (\ref{S4E7}),  the following non-Markovian approximation of the kinetic equation is deduced for the function $n$: 
\begin{eqnarray}
&&\partial _{t}n\left( k_{1},t\right) =4\varepsilon ^{2}\int_{\mathbb{R}%
^{3}}dk_{2}\int_{\mathbb{R}^{3}}d\xi _{1}\int_{\mathbb{R}^{3}}d\xi
_{2}\int_{0}^{t}ds\cos \left( \frac{\left( t-s\right) }{2}\left(
\sum_{j=1}^{2}\left\vert k_{j}\right\vert ^{2}-\sum_{j=1}^{2}\left\vert \xi
_{j}\right\vert ^{2}\right) \right) \times \nonumber  \\
&&\times  \Big[ \left( n\left( k_{1},s\right) +n\left( k_{2},s\right) \right)
n\left( \xi _{1},s\right) n\left( \xi _{2},s\right)-\nonumber \\ 
&&\hskip 3.2cm -n\left( k_{1},s\right)
n\left( k_{2},s\right) \left( n\left( \xi _{2},s\right) +n\left( \xi
_{1},s\right) \right) \Big] \delta \left( \xi _{1}+\xi
_{2}-k_{1}-k_{2}\right).    \label{S5E2}
\end{eqnarray}

To this end, we plug (\ref{S4E7}) into the evolution equation (\ref{S3E2}) for $n\left(k_{1},t\right)$ obtained using the invariance under
translations, that we recall here,%
\begin{eqnarray}
i\left( 2\pi \right) ^{\frac{3}{2}}\delta \left( k_{1}-\xi _{1}\right)
\partial _{t}n\left( k_{1},t\right) =\frac{\varepsilon }{\left( 2\pi \right)
^{3}}\int_{\mathbb{R}^{3}}d\bar{k}_{2}\int_{\mathbb{R}^{3}}d\bar{\xi}_{2}%
\Big[ \widehat{G_{2,2}}\left( k_{1},\bar{k}_{2};\xi _{1}+\bar{k}_{2}-\bar{%
\xi}_{2},\bar{\xi}_{2};t\right)-\nonumber \\
 -\widehat{G_{2,2}}\left( k_{1}-\bar{k}_{2}+%
\bar{\xi}_{2},\bar{k}_{2};\xi _{1},\bar{\xi}_{2};t\right) \Big]
\label{S4E8}
\end{eqnarray}

We remark that if the Dirac masses $\delta \left( k_{1}+k_{2}-\xi _{1}-\xi _{2}\right) $ in the first factor at the right hand side of (\ref{S4E7}) are evaluated in the corresponding arguments of  $\widehat{G_{2,2}}$ at the right hand side of (\ref{S4E8}), they both yield the same Dirac mass at the left hand side of (\ref{S4E8}):
\begin{eqnarray*}
\delta \left( k_{1}+\bar{k}_{2}-\left( \xi _{1}+\bar{k}_{2}-\bar{\xi}%
_{2}\right) -\bar{\xi}_{2}\right) &=&\delta \left( k_{1}-\xi _{1}\right) \\
\delta \left( k_{1}-\bar{k}_{2}+\bar{\xi}_{2}+\bar{k}_{2}-\xi _{1}-\bar{\xi}%
_{2}\right) &=&\delta \left( k_{1}-\xi _{1}\right).
\end{eqnarray*}

However, in order to simplify the computations, it seems convenient to rewrite (\ref{S4E8}), introducing a new integration variables $\eta$  as%
\begin{eqnarray}
&&\delta \left( k_{1}-\xi _{1}\right) \partial _{t}n\left( k_{1},t\right)=
\frac{\varepsilon }{i\left( 2\pi \right) ^{\frac{9}{2}}}\int_{\mathbb{R}%
^{3}}d\bar{k}_{2}\int_{\mathbb{R}^{3}}d\bar{\xi}_{2}\int_{\mathbb{R}%
^{3}}d\eta \cdot  \label{S4E9} \\
&&\cdot \left[ \widehat{G_{2,2}}\left( k_{1},\bar{k}_{2};\eta ,\bar{\xi}%
_{2};t\right) \delta \left( \eta +\bar{\xi}_{2}-\xi _{1}-\bar{k}_{2}\right) -%
\widehat{G_{2,2}}\left( \eta ,\bar{k}_{2};\xi _{1},\bar{\xi}_{2};t\right)
\delta \left( \eta +\bar{k}_{2}-\bar{\xi}_{2}-k_{1}\right) \right].  \nonumber
\end{eqnarray}

We rewrite (\ref{S4E7}) as%
\begin{equation}
\widehat{G_{2,2}}\left( k_{1},k_{2};\xi _{1},\xi _{2};t\right) =\frac{2\cdot
\left( 2\pi \right) ^{\frac{9}{2}}\varepsilon }{i}\delta \left(
k_{1}+k_{2}-\xi _{1}-\xi _{2}\right) \Delta \left( k_{1},k_{2};\xi _{1},\xi
_{2};t\right)  \label{S5E1}
\end{equation}%
where

\begin{eqnarray}
&&\Delta \left( k_{1},k_{2};\xi _{1},\xi _{2};t\right) =\int_{0}^{t}\exp
\left( \frac{i\left( t-s\right) }{2}\left( \sum_{j=1}^{2}\left\vert
k_{j}\right\vert ^{2}-\sum_{j=1}^{2}\left\vert \xi _{j}\right\vert
^{2}\right) \right) \cdot  \nonumber \\
&&\cdot \left[ n\left( k_{1},s\right) n\left( k_{2},s\right) \left( n\left(
\xi _{2},s\right) +n\left( \xi _{1},s\right) \right) -\left( n\left(
k_{1},s\right) +n\left( k_{2},s\right) \right) n\left( \xi _{1},s\right)
n\left( \xi _{2},s\right) \right] ds  \label{S5E1b}
\end{eqnarray}

Plugging (\ref{S5E1}) into (\ref{S4E9}) we obtain%
\begin{eqnarray*}
&&\delta \left( k_{1}-\xi _{1}\right) \partial _{t}n\left( k_{1},t\right) =-%
\frac{2\cdot \left( 2\pi \right) ^{\frac{9}{2}}\varepsilon ^{2}}{\left( 2\pi
\right) ^{\frac{9}{2}}}\int_{\mathbb{R}^{3}}d\bar{k}_{2}\int_{\mathbb{R}%
^{3}}d\bar{\xi}_{2}\int_{\mathbb{R}^{3}}d\eta \\
&& \Big[ \delta \left( k_{1}+%
\bar{k}_{2}-\eta -\bar{\xi}_{2}\right) \Delta \left( k_{1},\bar{k}_{2};\eta ,%
\bar{\xi}_{2};t\right) \delta \left( \eta +\bar{\xi}_{2}-\xi _{1}-\bar{k}%
_{2}\right) - \\
&&  -\delta \left( \eta +\bar{k}_{2}-\xi _{1}-\bar{\xi}_{2}\right)
\Delta \left( \eta ,\bar{k}_{2};\xi _{1},\bar{\xi}_{2};t\right) \delta
\left( \eta +\bar{k}_{2}-\bar{\xi}_{2}-k_{1}\right) \Big]
\end{eqnarray*}
and, combining the Dirac masses%
\begin{eqnarray*}
&&\delta \left( k_{1}-\xi _{1}\right) \partial _{t}n\left( k_{1},t\right) =-%
\frac{2\cdot \left( 2\pi \right) ^{\frac{9}{2}}\varepsilon ^{2}}{\left( 2\pi
\right) ^{\frac{9}{2}}}\delta \left( k_{1}-\xi _{1}\right) \int_{\mathbb{R}%
^{3}}d\bar{k}_{2}\int_{\mathbb{R}^{3}}d\bar{\xi}_{2}\int_{\mathbb{R}%
^{3}}d\eta\\
 &&\left[ \Delta \left( k_{1},\bar{k}_{2};\eta ,\bar{\xi}%
_{2};t\right) \delta \left( \eta +\bar{\xi}_{2}-\xi _{1}-\bar{k}_{2}\right)
-\Delta \left( \eta ,\bar{k}_{2};\xi _{1},\bar{\xi}_{2};t\right) \delta
\left( \eta +\bar{k}_{2}-\bar{\xi}_{2}-k_{1}\right) \right]. 
\end{eqnarray*}

Then%
\begin{eqnarray*}
\partial _{t}n\left( k_{1},t\right) =2\varepsilon ^{2}\int_{\mathbb{R}^{3}}d%
\bar{k}_{2}\int_{\mathbb{R}^{3}}d\bar{\xi}_{2}\int_{\mathbb{R}^{3}}d\eta %
\Big[ \Delta \left( \eta ,\bar{k}_{2};k_{1},\bar{\xi}_{2};t\right) \delta
\left( \eta +\bar{k}_{2}-\bar{\xi}_{2}-k_{1}\right)-\\
 -\Delta \left( k_{1},%
\bar{k}_{2};\eta ,\bar{\xi}_{2};t\right) \delta \left( \eta +\bar{\xi}%
_{2}-k_{1}-\bar{k}_{2}\right) \Big] 
\end{eqnarray*}

We can replace the second Dirac by the first exchanging the variables $\bar{%
\xi}_{2}\longleftrightarrow \bar{k}_{2}.$ Then%
\begin{eqnarray*}
&&\partial _{t}n\left( k_{1},t\right) =2\varepsilon ^{2}\int_{\mathbb{R}^{3}}d%
\bar{k}_{2}\int_{\mathbb{R}^{3}}d\bar{\xi}_{2}\int_{\mathbb{R}^{3}}d\eta %
\Big[ \Delta \left( \eta ,\bar{k}_{2};k_{1},\bar{\xi}_{2};t\right)-\\
&&\hskip 4cm  -\Delta
\left( k_{1},\bar{\xi}_{2};\eta ,\bar{k}_{2};t\right) \Big] \delta \left(
\eta +\bar{k}_{2}-\bar{\xi}_{2}-k_{1}\right) 
\end{eqnarray*}
and relabelling the variables as follows%
\[
\bar{\xi}_{2}\rightarrow k_{2}\ \ ,\ \ \eta \rightarrow \xi _{1}\ \ ,\ \ 
\bar{k}_{2}\rightarrow \xi _{2} 
\]
it follows%
\begin{eqnarray*}
&&\partial _{t}n\left( k_{1},t\right) =2\varepsilon ^{2}\int_{\mathbb{R}%
^{3}}dk_{2}\int_{\mathbb{R}^{3}}d\xi _{1}\int_{\mathbb{R}^{3}}d\xi _{2}\Big[
\Delta \left( \xi _{1},\xi _{2};k_{1},k_{2};t\right)-\\ 
&&\hskip 4cm -\Delta \left(
k_{1},k_{2};\xi _{1},\xi _{2};t\right) \Big] \delta \left( \xi _{1}+\xi
_{2}-k_{1}-k_{2}\right). 
\end{eqnarray*}

We now compute the difference between brackets.%
\begin{eqnarray*}
&&\left[ \Delta \left( \xi _{1},\xi _{2};k_{1},k_{2};t\right) -\Delta \left(
k_{1},k_{2};\xi _{1},\xi _{2};t\right) \right] =\\
&&=\int_{0}^{t}ds\exp \left( \frac{i\left( t-s\right) }{2}\left(
-\sum_{j=1}^{2}\left\vert k_{j}\right\vert ^{2}+\sum_{j=1}^{2}\left\vert \xi
_{j}\right\vert ^{2}\right) \right) \cdot \\
&&\cdot \left[ \left( n\left( k_{1},s\right) +n\left( k_{2},s\right) \right)
n\left( \xi _{1},s\right) n\left( \xi _{2},s\right) -n\left( k_{1},s\right)
n\left( k_{2},s\right) \left( n\left( \xi _{2},s\right) +n\left( \xi
_{1},s\right) \right) \right] - \\
&&-\int_{0}^{t}ds\exp \left( \frac{i\left( t-s\right) }{2}\left(
\sum_{j=1}^{2}\left\vert k_{j}\right\vert ^{2}-\sum_{j=1}^{2}\left\vert \xi
_{j}\right\vert ^{2}\right) \right) \cdot \\
&&\cdot \left[ n\left( k_{1},s\right) n\left( k_{2},s\right) \left( n\left(
\xi _{2},s\right) +n\left( \xi _{1},s\right) \right) -\left( n\left(
k_{1},s\right) +n\left( k_{2},s\right) \right) n\left( \xi _{1},s\right)
n\left( \xi _{2},s\right) \right]
\end{eqnarray*}

\begin{eqnarray}
&&\left[ \Delta \left( \xi _{1},\xi _{2};k_{1},k_{2};t\right) -\Delta \left(
k_{1},k_{2};\xi _{1},\xi _{2};t\right) \right] = \nonumber \\
&&=\int_{0}^{t}ds\Bigg[ \exp \left( \frac{i\left( t-s\right) }{2}\left(
-\sum_{j=1}^{2}\left\vert k_{j}\right\vert ^{2}+\sum_{j=1}^{2}\left\vert \xi
_{j}\right\vert ^{2}\right) \right)+\\
&&\hskip 5.4cm  +\exp \left( \frac{i\left( t-s\right) }{2%
}\left( \sum_{j=1}^{2}\left\vert k_{j}\right\vert
^{2}-\sum_{j=1}^{2}\left\vert \xi _{j}\right\vert ^{2}\right) \right) \Bigg]
\times   \nonumber \\
&&\times  \left[ \left( n\left( k_{1},s\right) +n\left( k_{2},s\right) \right)
n\left( \xi _{1},s\right) n\left( \xi _{2},s\right) -n\left( k_{1},s\right)
n\left( k_{2},s\right) \left( n\left( \xi _{2},s\right) +n\left( \xi
_{1},s\right) \right) \right]  \nonumber \\
&&=2\int_{0}^{t}ds\cos \left( \frac{\left( t-s\right) }{2}\left(
\sum_{j=1}^{2}\left\vert k_{j}\right\vert ^{2}-\sum_{j=1}^{2}\left\vert \xi
_{j}\right\vert ^{2}\right) \right) \cdot  \nonumber \\
&&\cdot \left[ \left( n\left( k_{1},s\right) +n\left( k_{2},s\right) \right)
n\left( \xi _{1},s\right) n\left( \xi _{2},s\right) -n\left( k_{1},s\right)
n\left( k_{2},s\right) \left( n\left( \xi _{2},s\right) +n\left( \xi
_{1},s\right) \right) \right]  \label{S5E1a}
\end{eqnarray}

We then obtain the equation%
\begin{eqnarray*}
&&\partial _{t}n\left( k_{1},t\right) =4\varepsilon ^{2}\int_{\mathbb{R}%
^{3}}dk_{2}\int_{\mathbb{R}^{3}}d\xi _{1}\int_{\mathbb{R}^{3}}d\xi
_{2}\int_{0}^{t}ds\cos \left( \frac{\left( t-s\right) }{2}\left(
\sum_{j=1}^{2}\left\vert k_{j}\right\vert ^{2}-\sum_{j=1}^{2}\left\vert \xi
_{j}\right\vert ^{2}\right) \right) \times \nonumber  \\
&&\times  \Big[ \left( n\left( k_{1},s\right) +n\left( k_{2},s\right) \right)
n\left( \xi _{1},s\right) n\left( \xi _{2},s\right)-\nonumber \\ 
&&\hskip 3.7cm -n\left( k_{1},s\right)
n\left( k_{2},s\right) \left( n\left( \xi _{2},s\right) +n\left( \xi
_{1},s\right) \right) \Big] \delta \left( \xi _{1}+\xi
_{2}-k_{1}-k_{2}\right). 
\end{eqnarray*}

This is the non-Markovian approximation of the kinetic equation, as
expected. The natural time scale is $t\approx \frac{1}{\varepsilon ^{2}}.$

Assuming that $n\left( k,t\right) $ changes in that time scale $t\approx \frac{1}{\varepsilon ^{2}}$, we can
approximate the oscillatory integral in time by a Dirac in the energy. It is
convenient to change the time scale%
\begin{equation}
\tau =t\varepsilon ^{2}\ \ ,\ \ \sigma =s\varepsilon ^{2}  \label{S5E2a}
\end{equation}

Then, assuming that $n\left( k,t\right) $ changes in the time scale $\tau
\approx 1,$ we can replace (\ref{S5E2}) by%
\begin{eqnarray}
&&\hskip -0.7cm \partial _{\tau }n\left( k_{1},\tau \right) =4\int_{\mathbb{R}%
^{3}}dk_{2}\int_{\mathbb{R}^{3}}d\xi _{1}\int_{\mathbb{R}^{3}}d\xi
_{2}\int_{0}^{\tau }\frac{d\sigma }{\varepsilon ^{2}}\cos \left( \frac{%
\left( \tau -\sigma \right) }{2\varepsilon ^{2}}\left(
\sum_{j=1}^{2}\left\vert k_{j}\right\vert ^{2}-\sum_{j=1}^{2}\left\vert \xi
_{j}\right\vert ^{2}\right) \right) \times   \label{S5E3} \\
&&\times  \Big[ \left( n\left( k_{1},\sigma \right) +n\left( k_{2},\sigma
\right) \right) n\left( \xi _{1},\sigma \right) n\left( \xi _{2},\sigma
\right)-\nonumber\\
&& \hskip 3cm -n\left( k_{1},\sigma \right) n\left( k_{2},\sigma \right) \left(
n\left( \xi _{2},\sigma \right) +n\left( \xi _{1},\sigma \right) \right) %
\Big] \delta \left( \xi _{1}+\xi _{2}-k_{1}-k_{2}\right).\nonumber
\end{eqnarray}
We just remark that%
\[
\int_{0}^{\tau }\frac{d\sigma }{\varepsilon ^{2}}\cos \left( \frac{\left(
\tau -\sigma \right) \Omega }{2\varepsilon ^{2}}\right) =\int_{0}^{\tau }%
\frac{d\sigma }{\varepsilon ^{2}}\cos \left( \frac{\sigma \Omega }{%
2\varepsilon ^{2}}\right) 
\]

\[
\int d\Omega \psi \left( \Omega \right) \int_{0}^{\tau }\frac{d\sigma }{%
\varepsilon ^{2}}\cos \left( \frac{\sigma \Omega }{2\varepsilon ^{2}}\right)
=\int d\Omega \psi \left( \Omega \right) \frac{2}{\Omega }\sin \left( \frac{%
\tau \Omega }{2\varepsilon ^{2}}\right) =2\int d\theta \psi \left( \frac{%
2\varepsilon ^{2}\theta }{\tau }\right) \frac{\sin \theta }{\theta } 
\]%
which formally converges to%
\[
2\psi \left( 0\right) \int_{-\infty }^{\infty }\frac{\sin \theta }{\theta }%
d\theta =2\pi \psi \left( 0\right). 
\]

Then (\ref{S5E3}) converges, in the limit $\varepsilon \rightarrow 0$ to%
\begin{eqnarray}
&&\hskip -0.9cm \partial _{\tau }n\left( k_{1},\tau \right)  =8\pi \int_{\mathbb{R}%
^{3}}dk_{2}\int_{\mathbb{R}^{3}}d\xi _{1}\int_{\mathbb{R}^{3}}d\xi
_{2}\delta \left( \sum_{j=1}^{2}\left\vert k_{j}\right\vert
^{2}-\sum_{j=1}^{2}\left\vert \xi _{j}\right\vert ^{2}\right) \delta
\left( \xi _{1}+\xi _{2}-k_{1}-k_{2}\right) \times   \nonumber\\
&&\times  \left[ \left( n\left( k_{1},\tau \right) +n\left( k_{2},\tau \right)
\right) n\left( \xi _{1},\tau \right) n\left( \xi _{2},\tau \right) -n\left(
k_{1},\tau \right) n\left( k_{2},\tau \right) \left( n\left( \xi _{2},\tau
\right) +n\left( \xi _{1},\tau \right) \right) \right]    \label{S5E4} 
\end{eqnarray}%
where $k_{1}\in \mathbb{R}^{3},\ \tau >0.$ This gives the WT kinetic
equation.

\bigskip

\section{Self-similar blow-up profiles for the kinetic equation.}
\setcounter{equation}{0}
\setcounter{theorem}{0}
\bigskip

\subsection{Isotropic solutions of the WT kinetic equation. Reformulation of
the equation in terms of the energy variable.}

\bigskip

In this Subsection we reformulate the equation (\ref{S5E4}) for the
solutions $n\left( k,\tau \right) $ that depend only in the variable $%
\left\vert k\right\vert .$ In addition, it will be convenient to rewrite
these solutions in terms of the energy variable $\epsilon =\frac{\left\vert
k\right\vert ^{2}}{2},$ which has a more relevant physical meaning than $%
\left\vert k\right\vert $. It turns out that the particle density in the
space of energy has the form%
\begin{equation}
f\left( \epsilon ,\tau \right) =n\left( k,\tau \right) \ \ ,\ \ \epsilon =%
\frac{\left\vert k\right\vert ^{2}}{2}  \label{S6E1}
\end{equation}

Notice that the function $f$ is just the original density $n$ written in
terms of the energy variable.

It turns out that, assuming isotropy of the initial data, i.e. $n_{0}\left(
k\right) =n_{0}\left( \left\vert k\right\vert \right) ,$ the corresponding
solution $n\left( k,t\right) $ of (\ref{S5E4}) is also isotropic for $t>0$.
Moreover, the function $f\left( \epsilon ,\tau \right) $ defined in (\ref%
{S6E1}) satisfies%
\begin{equation}
\partial _{\tau }f\left( \epsilon _{1},\tau \right) =\mathbb{C}\left[ f%
\right] \left( \epsilon _{1},\tau \right) \ \ ,\ \ \ \ \epsilon _{1}\geq 0\ 
\label{T2E6}
\end{equation}%
where%
\begin{equation}
\mathbb{C}\left[ f\right] \left( \epsilon _{1},\tau \right) =\Gamma \int
\int d\epsilon _{3}d\epsilon _{4}W\left( \epsilon _{1},\epsilon
_{2},\epsilon _{3},\epsilon _{4}\right) \left[ \left( f_{1}+f_{2}\right)
f_{3}f_{4}-\left( f_{3}+f_{4}\right) f_{1}f_{2}\right]  \label{T2E7}
\end{equation}%
\[
\Gamma =\frac {1} {8 \pi ^6}
\]
\begin{equation}
W\left( \epsilon _{1},\epsilon _{2},\epsilon _{3},\epsilon _{4}\right) =\min
\left\{ 1,\sqrt{\frac{\epsilon _{2}}{\epsilon _{1}}},\sqrt{\frac{\epsilon
_{3}}{\epsilon _{1}}},\sqrt{\frac{\epsilon _{4}}{\epsilon _{1}}}\right\} \
,\ \epsilon _{2}=\epsilon _{3}+\epsilon _{4}-\epsilon _{1}\   \label{T2E8}
\end{equation}%
where from now on we use the notation%
\[
f_{j}=f\left( \epsilon _{j},\tau \right) ,\ \ j=1,2,3,4\ . 
\]

The problem (\ref{T2E6}) must be solved with the initial condition%
\begin{equation}
f\left( \epsilon ,0\right) =f_{0}\left( \epsilon \right) \ \ ,\ \ \epsilon >0
\label{T2E9}
\end{equation}

The region of integration in (\ref{T2E6}) is the set $\left\{ \left(
\epsilon _{3},\epsilon _{4}\right) \in \mathbb{R}_{+}^{2}:\epsilon
_{3}+\epsilon _{4}\geq \epsilon _{1}\right\} .$ Notice that this set can be
also defined by means of the inequalities $\epsilon _{2}\geq 0,$ $\epsilon
_{3}\geq 0,\ \epsilon _{4}\geq 0.$ The equation (\ref{T2E7}) for the
isotropic solutions of the WT equation, as well as the analogous equation
for Nordheim equation has been derived by multiple authors (cf. for instance 
\cite{SK1}, \cite{SK2}, \cite{JPR}, \cite{LLPR}).

The properties of the solutions of the initial value problem (\ref{T2E6}), (%
\ref{T2E9}) have been extensively studied using analytical and numerical
methods (cf. \cite{SK1}, \cite{SK2}, \cite{JPR}, \cite{LLPR}, \cite{EV1}, 
\cite{EV3}, \cite{SGMN}). In particular, it has been shown in \cite{EV1}, 
\cite{EV3} that the solutions of (\ref{T2E6}), (\ref{T2E9}) with bounded
initial data $f_{0}\left( \epsilon \right) $ blow-up in finite time for a
large class of functions $f_{0}.$ There are not available rigorous results
concerning the form of the solutions near the blow-up point. However, the
numerical simulations in \cite{SK1}, \cite{SK2}, \cite{JPR}, \cite{LLPR}
suggest that one stable mechanism of blow-up for the solutions of (\ref{T2E6}%
), (\ref{T2E9}) can be described by a self-similar solution that we describe
in the following Subsection.

\bigskip

\subsection{Self-similar blow-up solutions for the isotropic kinetic
equation.}

\bigskip

In order to determine the form of the self-similar solutions of (\ref{T2E6}%
), (\ref{T2E7}) exhibiting blow-up it is convenient to use some dimensional
analysis arguments. From the dimensional point of view, (\ref{T2E6}), (\ref%
{T2E7}) reads as%
\[
\frac{\left[ f\right] }{\left[ \tau \right] }=\left[ f\right] ^{3}\left[
\epsilon \right] ^{2} 
\]

Then%
\begin{equation}
\left[ f\right] ^{2}\left[ \epsilon \right] ^{2}\left[ \tau \right] =1
\label{A1}
\end{equation}

This formula gives a relation between the scaling parameters associated to $%
f,\ \epsilon $ and $\tau .$ Indeed, if we look for solutions of (\ref{T2E6}%
), (\ref{T2E7}) in the form%
\begin{equation}
f\left( \epsilon ,\tau \right) =\sqrt{2\beta }\left( -\tau \right) ^{-\alpha
}\phi \left( \frac{\epsilon }{\left( -\tau \right) ^{2\beta }}\right) \ \ ,\
\ \omega =\frac{\epsilon }{\left( -\tau \right) ^{2\beta }}  \label{A2}
\end{equation}%
the identity (\ref{A1}) implies

\begin{equation}
\alpha -2\beta =\frac{1}{2}  \label{A4}
\end{equation}
We follow in these computations the notation of \cite{JPR}, \cite{LLPR}. \\
 Notice that in (\ref{A2}) we are assuming that the solution $f$ is defined
for $\tau <0$ and it blows up for $\tau =0.$ This is always possible due to
the invariance of (\ref{T2E6}), (\ref{T2E7}) under translations in time. The
multiplicative (non-dimensional) factor $\sqrt{2\beta }$ on the right-hand
side of the first formula of (\ref{A2}) has been introduced in order to
obtain simpler formulas for the equation satisfied by $\phi $ (cf. \cite{JPR}%
, \cite{LLPR}). The choice of the exponent $2\beta $ in order to scale the
energy is convenient in order to have the scaling with the exponent $\beta $
for $\left\vert k\right\vert .$

The function $\phi $ in (\ref{A2}) satisfies the integro-differential
equation%
\begin{equation}
\alpha \phi \left( \omega \right) +2\beta \omega \phi _{\omega }\left(
\omega \right) =2\beta \mathbb{C}\left[ \phi \right] \left( \omega \right)
\label{S6E2}
\end{equation}%
and dividing this equation by $2\beta $ we finally arrive at the non-linear
eigenvalue problem%
\begin{equation}
\nu \phi \left( \omega \right) +\omega \phi _{\omega }\left( \omega \right) =%
\mathbb{C}\left[ \phi \right] \left( \omega \right)  \label{S6E3}
\end{equation}%
where, using (\ref{A4}) we obtain 
\begin{equation}
\nu =\frac{\alpha }{2\beta }=\frac{2\beta +\frac{1}{2}}{2\beta }>1
\label{A3}
\end{equation}

The parameter $\nu $ is in (\ref{S6E3}) a free parameter which is part of
the solution of the problem to be solved. This type of problems in which the
scaling properties of the solutions cannot be determined by purely
dimensional arguments and require the solution of an eigenvalue problem in
order to determine the scaling of the solutions are usually termed as 
\textit{Self-similarity of the second kind} (cf. \cite{Bar}).\ 

Currently, there is not any rigorous mathematical result concerning the
existence of non-trivial solutions of (\ref{T2E7}), (\ref{S6E2}). A
numerical method that allows to obtain solutions of (\ref{T2E7}), (\ref{S6E2}%
) has been developed in \cite{SGMN}. An alternative approach that it was the
original one used to obtain self-similar behaviours for the solutions of (%
\ref{T2E6}), (\ref{T2E7}) was the one contained in the papers \cite{SK1}, 
\cite{SK2}, \cite{JPR}, \cite{LLPR}, \cite{SGMN}. In these papers numerical
simulations of the time dependent problem (\ref{T2E6}), (\ref{T2E7}) were
performed. The self-similar profiles and the value of $\nu $ obtained in 
\cite{SK1}, \cite{SK2}, \cite{JPR}, \cite{LLPR}, \cite{SGMN} are similar.
The numerical values of $\nu, \alpha , \beta  $ obtained in the papers are the following ones%
\[
\begin{tabular}{lllll}
Paper & $\nu $ & $\alpha $ & $\beta $ &  \\ 
D. V. Semikoz \&al. \cite{SK1}& 1.24 &  2.6& n.a. &  \\ 
R. Lacaze \& al. \cite{LLPR} & 1.234 & 2.639 & 2.139 &  \\ 
B.V. Semisalov \& al. \cite{SGMN} & 1.22 &  n.a. & n.a. & 
\end{tabular}%
\]
\bigskip
(n.a. for  non available).

Notice that the values of $\nu $ obtained in the three papers agree.
Moreover, the fact that these self-similar solutions arise in direct
numerical simulations of the time dependent equations, suggest that these
self-similar solutions are stable. The numerical computations in \cite{SK1}, \cite{LLPR}, \cite{SGMN}  indicate that the solutions of (\ref{T2E7}), (\ref{S6E2}) under
consideration are globally bounded, and they behave for large values of $%
\omega $ as the power law%
\[
\phi \left( \omega \right) \sim \frac{A}{\omega ^{\nu }}\ \ \  as\ \ \
\omega \rightarrow \infty 
\]%
where $A>0.$ This asymptotic behaviour for the solutions of (\ref{T2E7}), (%
\ref{S6E2}) has been justified in \cite{JPR}, \cite{LLPR}, \cite{SGMN}.

\bigskip

We will use in the next Section the self-similar solutions rewritten in the
original variables $n,\ k.$ Using (\ref{S6E1}) we obtain that, assuming that
a solution $\phi $ of (\ref{T2E7}), (\ref{S6E2}) with the properties
indicated above exists, the following function would be a self-similar
solution of (\ref{S5E4}) defined for $\tau <0$ and it blows up at time $\tau
=0.$

\begin{equation}
n\left( k,\tau \right) =\left( -\tau \right) ^{-2\beta -\frac{1}{2}}\Phi
\left( z\right) \ \ ,\ \ z=\frac{k}{\left( -\tau \right) ^{\beta }}\ \ ,\ \
\beta =1.068...  \label{S5E5}
\end{equation}

We value of the parameter $\beta $ has been computed using (\ref{A3}) with
the value of $\nu =1.234...$ obtained in \cite{JPR}, \cite{LLPR}. The
function $\Phi $ in (\ref{S5E5}) is given by%
\begin{equation}
\Phi \left( z\right) =\sqrt{2\beta }\phi \left( \frac{\left( z\right) ^{2}}{2%
}\right)   \label{A5}
\end{equation}%
where $\phi $ is the solution of (\ref{T2E7}), (\ref{S6E2}).

Notice that the function $\Phi $ is expected to have a well defined positive
limit as $\left\vert z\right\vert \rightarrow 0,$ since the function $\phi
\left( \omega \right) $ is seem to converge to a positive constant in the
numerical simulations of \cite{SK1}, \cite{SK2}, \cite{JPR}, \cite{LLPR}, 
\cite{SGMN}.

\section{Loss of Markovianity and onset of correlations near the blow-up
time.}
\setcounter{equation}{0}
\setcounter{theorem}{0}
\bigskip
\subsection{Loss of Markovianity}
We now examine the effect of the blow-up in the formal derivation of the
kinetic equation (\ref{S5E4}). Specifically, we will see that a breakdown in
the arguments yielding the derivation of the kinetic equation (\ref{S5E4})
obtained in Section \ref{Derivation} when the time approaches to the time in which the
solutions of (\ref{S5E4}) develop a singularity.

Notice that the main approximation made in the derivation of (\ref{S5E4})
from (\ref{S5E3}) is to assume that the function $n\left( k,\tau \right) $
has changes of order one if the time $\tau $ changes by amounts of order $%
\left( \tau _{2}-\tau _{1}\right) \left\vert k\right\vert ^{2}\lesssim
\varepsilon ^{2}.$ Given a solution behaving as in (\ref{S5E5}), we obtain
that this approximation fails if $\left( -\tau \right) \left\vert
k\right\vert ^{2}\approx \varepsilon ^{2}.$ Given that $\Phi $ is
concentrated in regions where $\left\vert z\right\vert $ is of order one,
this means that $\left\vert k\right\vert \approx \left( -\tau \right)
^{\beta }.$ Then, the Markovian approximation can be expected to fail for
times in which $\left( -\tau \right) ^{1+2\beta }\approx \varepsilon ^{2}$,
or equivalently%
\begin{equation}
\left( -\tau \right) \approx \varepsilon ^{\frac{2}{1+2\beta }}  \label{S5E6}
\end{equation}

We claim that, when $\left( -\tau \right) $ becomes of order $\varepsilon ^{%
\frac{2}{1+2\beta }},$ the approximation $\left\vert \widehat{G_{2,2}}%
\right\vert \ll n^{2}$  that has been used in all the previous
approximations (\textcolor{blue}{cf. (\ref{S2E8}}) and the comment below), as well as the approximation of $\widehat{F_{3,3}}$ by
products of terms with the form $\widehat{F_{1,1}}$ breaks down. Indeed, we
estimate first the order of magnitude of $\widehat{G_{2,2}}$ if (\ref{S5E6})
holds. Using (\ref{S5E1a}) as well as the fact that the microscopic time
scale is given by $t=\frac{\tau }{\varepsilon ^{2}}$ we obtain the
approximation
\begin{eqnarray*}
&&\Delta \left( k_{1},k_{2};\xi _{1},\xi _{2};\tau \right) =\frac{1}{%
\varepsilon ^{2}}\int_{0}^{\tau }\exp \left( \frac{i\left( \tau -\overline{\tau }
\right) }{2\varepsilon ^{2}}\left( \sum_{j=1}^{2}\left\vert k_{j}\right\vert
^{2}-\sum_{j=1}^{2}\left\vert \xi _{j}\right\vert ^{2}\right) \right)\times 
\\
&&\times  \left[ n\left( k_{1},\overline{\tau } \right) n\left( k_{2},\overline{\tau } \right)
\left( n\left( \xi _{2},\overline{\tau } \right) +n\left( \xi _{1},\overline{\tau } \right)
\right) -\left( n\left( k_{1},\overline{\tau } \right) +n\left( k_{2},\overline{\tau } \right)
\right) n\left( \xi _{1},\overline{\tau } \right) n\left( \xi _{2},\overline{\tau } \right) %
\right] d\overline{\tau }
\end{eqnarray*}

Notice that the quantity inside the exponential factor becomes of order one
if $\left\vert k\right\vert $ is of order $\left( -\tau \right) ^{\beta }$
and (\ref{S5E6}) holds. On the other hand, the factors $n$ yield, using (\ref%
{S5E5}), a term of order $\left( -\tau \right) ^{-6\beta -\frac{3}{2}}.$ The
integration yields another factor of order $\left( -\tau \right) .$ Then,
the order of magnitude of $\left\vert \Delta \right\vert $ is $\frac{1}{%
\varepsilon ^{2}}\left( -\tau \right) ^{-6\beta -\frac{1}{2}}$ and then, the
order of magnitude of $\widehat{G_{2,2}}$ is $\frac{1}{\varepsilon }\left(
-\tau \right) ^{-9\beta -\frac{1}{2}}.$ (Notice that the Dirac mass $\delta
\left( k_{1}+k_{2}-\xi _{1}-\xi _{2}\right) $ scales like $\varepsilon
^{-3\beta }$ since $\left\vert k\right\vert $ scales like $\left( -\tau
\right) ^{\beta }$). We need to compare this order of magnitude of $\widehat{%
G_{2,2}}$ with the order of magnitude of $\left( \widehat{F_{1,1}}\right)
^{2}.$ We recall (cf. (\ref{S3E1})) that $\widehat{F_{1,1}}\left( k_{1};\xi
_{1};t\right) =\left( 2\pi \right) ^{\frac{3}{2}}\delta \left( k_{1}-\xi
_{1}\right) n\left( k_{1},t\right) .$ Then, for the range of values of $k$
and $\tau $ under consideration we have that the order of magnitude of $%
\widehat{F_{1,1}}$ is $\left( -\tau \right) ^{-3\beta }\left( -\tau \right)
^{-2\beta -\frac{1}{2}}=\left( -\tau \right) ^{-5\beta -\frac{1}{2}}.$ Then $%
\left( \widehat{F_{1,1}}\right) ^{2}$ is of order $\left( -\tau \right)
^{-10\beta -1}.$ Then $\widehat{G_{2,2}}$ and $\left( \widehat{F_{1,1}}%
\right) ^{2}$ become of the same order of magnitude if%
\[
\frac{1}{\varepsilon }\left( -\tau \right) ^{-9\beta -\frac{1}{2}}\approx
\left( -\tau \right) ^{-10\beta -1}
\]%
or, equivalently,  $\left( -\tau \right) ^{\beta +\frac{1}{2}}\approx \varepsilon$, i.e.  $\left( -\tau \right) \approx \varepsilon ^{\frac{2}{1+2\beta }}$, that yields (\ref{S5E6}).

Summarizing, the smallness of the correlations (or cumulants) which has been
used to derive the kinetic equation, fails for the range of times defined by
(\ref{S5E6}). Therefore, all the terms must be kept in the variables $%
\widehat{F_{2,2}},\ \widehat{F_{3,3}}$ and the factorization approximation
is not possible anymore for the range of times given by (\ref{S5E6}). 

\subsection{Onset of correlations near the blow-up
time.}

We rewrite (\ref{S2E3}) using the variable $t=\frac{\tau }{\varepsilon ^{2}}.$
Then%
\begin{eqnarray}
&&i\partial _{\tau }\widehat{F_{L,M}}\left( k_{1},k_{2},...,k_{L};\xi _{1},\xi
_{2},...,\xi _{M};\tau \right)=\nonumber  \\
&&=\frac{1}{2\varepsilon ^{2}}\left( -\sum_{j=1}^{L}\left\vert
k_{j}\right\vert ^{2}+\sum_{j=1}^{M}\left\vert \xi _{j}\right\vert
^{2}\right) \widehat{F_{L,M}}\left( k_{1},k_{2},...,k_{L};\xi _{1},\xi
_{2},...,\xi _{M};\tau \right) -  \nonumber \\
&&-\frac{1}{\left( 2\pi \right) ^{3}\varepsilon }\sum_{j=1}^{L}\int_{\mathbb{%
R}^{3}}d\bar{k}_{L+1}\int_{\mathbb{R}^{3}}d\bar{\xi}_{M+1}\nonumber\\
&&\widehat{%
F_{L+1,M+1}}\left( k_{1},k_{2},...k_{j-1},k_{j}-\bar{k}_{L+1}+\bar{\xi}%
_{M+1},k_{j+1},...,k_{L},\bar{k}_{L+1};\xi _{1},\xi _{2},...,\xi _{M},\bar{%
\xi}_{M+1};\tau \right) +  \nonumber \\
&&+\frac{1}{\left( 2\pi \right) ^{3}\varepsilon }\sum_{j=1}^{M}\int_{\mathbb{%
R}^{3}}d\bar{k}_{L+1}\int_{\mathbb{R}^{3}}d\bar{\xi}_{M+1}\widehat{%
F_{L+1,M+1}}\Big( k_{1},k_{2},...,k_{L},\bar{k}_{L+1};\xi _{1},\xi
_{2},...,\nonumber \\
&&,...,\xi _{j-1},\xi _{j}+\bar{k}_{L+1}-\bar{\xi}_{M+1},\xi
_{j+1},...,\xi _{M},\bar{\xi}_{M+1};\tau \Big), \hskip 0.3cm \tau \in  \mathbb{R},\, k_i\in  \mathbb{R}^3,\, \xi _i\in \mathbb{R}^3. \nonumber
\end{eqnarray}

We will assume that the contributions due to the terms with $L\neq M$ are
negligible. (Formally, if these terms are initially zero, they remain equal
to zero for later times). We then consider only the hierarchy of equations
with $L=M,$ i.e.%
\begin{eqnarray}
&&i\partial _{\tau }\widehat{F_{L,L}}\left( k_{1},k_{2},...,k_{L};\xi
_{1},\xi _{2},...,\xi _{L};\tau \right)=\nonumber  \\
&&=\frac{1}{2\varepsilon ^{2}}\left( -\sum_{j=1}^{L}\left\vert
k_{j}\right\vert ^{2}+\sum_{j=1}^{L}\left\vert \xi _{j}\right\vert
^{2}\right) \widehat{F_{L,L}}\left( k_{1},k_{2},...,k_{L};\xi _{1},\xi
_{2},...,\xi _{L};\tau \right) -  \nonumber \\
&&-\frac{1}{\left( 2\pi \right) ^{3}\varepsilon }\sum_{j=1}^{L}\int_{\mathbb{%
R}^{3}}d\bar{k}_{L+1}\int_{\mathbb{R}^{3}}d\bar{\xi}_{L+1}\nonumber \\
&&\widehat{%
F_{L+1,L+1}}\left( k_{1},k_{2},...k_{j-1},k_{j}-\bar{k}_{L+1}+\bar{\xi}%
_{L+1},k_{j+1},...,k_{L},\bar{k}_{L+1};\xi _{1},\xi _{2},...,\xi _{L},\bar{%
\xi}_{L+1};\tau \right) +  \nonumber \\
&&+\frac{1}{\left( 2\pi \right) ^{3}\varepsilon }\sum_{j=1}^{L}\int_{\mathbb{%
R}^{3}}d\bar{k}_{L+1}\int_{\mathbb{R}^{3}}d\bar{\xi}_{L+1}\nonumber \\
&&\widehat{%
F_{L+1,L+1}}\left( k_{1},k_{2},...,k_{L},\bar{k}_{L+1};\xi _{1},\xi
_{2},...,\xi _{j-1},\xi _{j}+\bar{k}_{L+1}-\bar{\xi}_{L+1},\xi
_{j+1},...,\xi _{L},\bar{\xi}_{L+1};\tau \right). 
 \label{S5E7} 
\end{eqnarray}

\bigskip We now examine the scaling of the different terms. Notice that (\ref%
{S3E1a}) suggests the following scaling for $\widehat{F_{L,L}}$%
\[
\left( \frac{1}{\left\vert k\right\vert ^{3}}\right) ^{L}n^{L}\approx \frac{1%
}{\left( -\tau \right) ^{3\beta L}}\frac{1}{\left( -\tau \right) ^{\left(
2\beta +\frac{1}{2}\right) L}}=\frac{1}{\left( -\tau \right) ^{5\beta L+%
\frac{L}{2}}}
\]

We can then see that the three terms in (\ref{S5E7}) become of the same
order of magnitude if (\ref{S5E6}) holds. Indeed, the first term (left-hand
side) is of order%
\[
\frac{1}{\left(-\tau \right) ^{5\beta L+\frac{L}{2}+1}} 
\]

The first term on the right-hand side is of order%
\[
\frac{1}{\varepsilon ^{2}}\left( -\tau \right) ^{2\beta }\frac{1}{\left(
-\tau \right) ^{5\beta L+\frac{L}{2}}}
\]%
and the last term (second on the right-hand side) is of order%
\[
\frac{1}{\varepsilon }\left( -\tau \right) ^{6\beta }\frac{1}{\left( -\tau
\right) ^{5\beta \left( L+1\right) +\frac{L+1}{2}}}
\]

Therefore we need to compare the terms%
\[
\frac{1}{\left( -\tau \right) }\ \ ,\ \ \frac{1}{\varepsilon ^{2}}\left(
-\tau \right) ^{2\beta }\ \ ,\ \frac{1}{\varepsilon }\frac{\left( -\tau
\right) ^{\beta }}{\left( -\tau \right) ^{\frac{1}{2}}}
\]%
and it readily follows that the three terms are of the same order of
magnitude for the range of times defined by (\ref{S5E6}).

It  follows from the previous sub Section
that we need to rescale the time scale using that
\begin{equation}
\label{timefinal}
\left( -\tau \right) \approx \varepsilon ^{\frac{2}{1+2\beta }}
\end{equation}

We then define a set of new variables as follows%
\begin{eqnarray}
\label{newTime}
-\tau =-\varepsilon ^{\frac{2}{1+2\beta }}\bar{\sigma}\ \ ,\ \ k=\varepsilon
^{\frac{2\beta }{1+2\beta }}p\ \ ,\ \ \xi =\varepsilon ^{\frac{2\beta }{%
1+2\beta }}\zeta \ \ ,\ \ \widehat{F_{L,L}}=\frac{\widehat{H_{L,L}}}{%
\varepsilon ^{\frac{10\beta L}{1+2\beta }+\frac{L}{1+2\beta }}}
\end{eqnarray}
where we used

\[
\widehat{F_{L,L}}=\left( \frac{1}{\left( \varepsilon ^{\frac{2\beta }{%
1+2\beta }}\right) ^{3}}\right) ^{L}\left( \frac{1}{\varepsilon ^{\frac{2}{%
1+2\beta }}}\right) ^{\left( 2\beta +\frac{1}{2}\right) L}\widehat{H_{L,L}}=%
\frac{\widehat{H_{L,L}}}{\varepsilon ^{\frac{10\beta L}{1+2\beta }+\frac{L}{%
1+2\beta }}} 
\]

Then, plugging these formulas into (\ref{S5E7}) we obtain%
\begin{eqnarray}
&&i\partial _{\bar{\sigma}}\widehat{H_{L,L}}\left(
p_{1},p_{2},...,p_{L};\zeta _{1},\zeta _{2},...,\zeta _{L};\bar{\sigma}%
\right)=\nonumber   \\
&&=\frac{1}{2}\left( -\sum_{j=1}^{L}\left\vert p_{j}\right\vert
^{2}+\sum_{j=1}^{L}\left\vert \zeta _{j}\right\vert ^{2}\right) \widehat{%
H_{L,L}}\left( p_{1},p_{2},...,p_{L};\zeta _{1},\zeta _{2},...,\zeta _{L};%
\bar{\sigma}\right) -  \nonumber \\
&&-\frac{1}{\left( 2\pi \right) ^{3}}\sum_{j=1}^{L}\int_{\mathbb{R}^{3}}d%
\bar{p}_{L+1}\int_{\mathbb{R}^{3}}d\bar{\zeta}_{L+1}\nonumber \\
&&\widehat{H_{L+1,L+1}}%
\left( p_{1},p_{2},..,p_{j-1},p_{j}-\bar{p}_{L+1}+\bar{\zeta}%
_{L+1},p_{j+1},...,p_{L},\bar{p}_{L+1};\zeta _{1},\zeta _{2},...,\zeta _{L},%
\bar{\zeta}_{L+1};\bar{\sigma}\right) +  \nonumber \\
&&+\frac{1}{\left( 2\pi \right) ^{3}}\sum_{j=1}^{L}\int_{\mathbb{R}^{3}}d%
\bar{p}_{L+1}\int_{\mathbb{R}^{3}}d\bar{\zeta}_{L+1}\widehat{H_{L+1,L+1}}
\Big( p_{1},p_{2},...,p_{L},\bar{p}_{L+1};\zeta _{1},\zeta _{2},...,\nonumber \\
&&...,\zeta
_{j-1},\zeta _{j}+\bar{p}_{L+1}-\bar{\zeta}_{L+1},p_{j+1},...,p_{L},\bar{%
\zeta}_{L+1};\bar{\sigma}\Big),\hskip 0.3cm \overline \sigma \in \mathbb{R},\, p_i\in \mathbb{R}^3,\, \zeta _i\in \mathbb{R}^3\label{S5E8}
\end{eqnarray}%
where we used that the three tems in (\ref{S5E7} )  have the same order of magnitude for the range of times defined in 
(\ref{timefinal}). Indeed, the sizes of the three terms in  (\ref{S5E7} ) , for times given (\ref{timefinal}) are given respectively by

\begin{eqnarray*}
&&\frac{1}{\varepsilon ^{\frac{10\beta L}{1+2\beta }+\frac{L}{1+2\beta }}}%
\frac{1}{\varepsilon ^{\frac{2}{1+2\beta }}},\,\, \frac{1}{\varepsilon
^{2}}\left( \varepsilon ^{\frac{2\beta }{1+2\beta }}\right) ^{2}\frac{1}{%
\varepsilon ^{\frac{10\beta L}{1+2\beta }+\frac{L}{1+2\beta }}},\,\,
\frac{1}{\varepsilon }\left( \varepsilon ^{\frac{6\beta }{1+2\beta }}\right)
^{2}\frac{1}{\varepsilon ^{\frac{10\beta \left( L+1\right) }{1+2\beta }+%
\frac{\left( L+1\right) }{1+2\beta }}}
\end{eqnarray*}
or equivalently, we need to compare
\begin{eqnarray*}
&&\frac{1}{\varepsilon ^{\frac{2}{1+2\beta }}},\ \frac{1}{\varepsilon ^{2}}%
\left( \varepsilon ^{\frac{2\beta }{1+2\beta }}\right) ^{2}\ ,\ \frac{1}{%
\varepsilon }\left( \varepsilon ^{\frac{6\beta }{1+2\beta }}\right) ^{2}%
\frac{1}{\varepsilon ^{\frac{10\beta }{1+2\beta }+\frac{1}{1+2\beta }}},
\end{eqnarray*}%
and all these terms have the same order of magnitude since  $\frac{2}{1+2\beta }=2-\frac{4\beta }{1+2\beta } $.

This equation, that describes the functions $\widehat{H_{L,L}}$ in the range
of times in which the correlations become important, must be solved with the
following matching conditions, which follow from  
 (\ref{S3E1a}), (\ref{S3E1}) and  (\ref{S6E1}), (\ref{A2}),
\begin{equation}
\widehat{H_{1,1}}\left( p;\zeta ;\bar{\sigma}\right) \sim \left( 2\pi
\right) ^{\frac{3}{2}}\delta \left( p_{1}-\zeta _{1}\right) \frac{1}{\left(
-\sigma \right) ^{2\beta +1}}\Phi \left( \frac{p}{\left( -\bar{\sigma}%
\right) ^{\beta }}\right) \ \ as \ \ \bar{\sigma}\rightarrow -\infty
\label{S5E9a}
\end{equation}%
\begin{equation}
\widehat{H_{L,L}}\left( p_{1},p_{2},...,p_{L};\zeta _{1},\zeta
_{2},...,\zeta _{L};\bar{\sigma}\right) =\left( 2\pi \right) ^{\frac{3}{2}%
L}\sum_{\sigma \in S^{L}}\prod_{j=1}^{L}\left[ \widehat{H_{1,1}}\left(
p_{j};\zeta _{\sigma \left( j\right) };\bar{\sigma}\right) \right] \ \ 
as\ \ \bar{\sigma}\rightarrow -\infty  \label{S5E9b}
\end{equation}

\bigskip

The problem (\ref{S5E8}), (\ref{S5E9a}), (\ref{S5E9b}) describes the onset
of correlations between the different variables near the blow-up of the
kinetic equation. 
It has some analogy and some differences  with the problem obtained in 
\cite{EV0} for the hierarchy of equations satisfied by the Wigner functions 
describing the onset of correlations
and the loss of Markovianity for a system of interacting bosons. The resemblance 
of both systems is natural since both equations are essentially the same for large values of the density functions,  
and it shows that the coherent stages follow a similar mechanism. The main difference is that in this paper 
properties of the correlations are described using a double Fourier transformation, while
in \cite{EV0} the classical Wigner transform was used. As a consequence of
this, the equations obtained in \cite{EV0} have the form of transport
equations, while in the equations obtained here we obtain multiplications by
terms with the form $\left( -\sum_{j=1}^{L}\left\vert k_{j}\right\vert
^{2}+\sum_{j=1}^{L}\left\vert \xi _{j}\right\vert ^{2}\right) .$

\begin{remark}
In the derivation of the WT equation used in this paper that
is based in the analysis of the cumulant equations  \cite{BN}, \cite{ZMR}, \cite{DNPZ}  , the breakdown of the
kinetic approximation becomes visible in the loss of the Markovianity
approximation as well as in the onset of correlations of order one in the
probability measures that describe the solution of the nonlinear Schr\"{o}%
dinger equation. It is relevant to ask what would be the fingerprint of the
singularity if the WT equation is derived using the Duhamel series as it has
been made in the rigorous derivation in \cite{DeHa}. The Duhamel series
approach basically provides a series for the solution of the WT kinetic
equation and one might expect, as usually happens in blow-up problems, that
the onset of the blow-up should be detectable in the asymptotic behavior of
the coefficients of the series. A simple example that suggests how this
could happen is the standard ODE yielding blow-up $\dot{x}=x^{2}$ with
initial value $x\left( 0\right) =x_{0}.$ The series power solution of this
problem is given by $x_{0}\sum_{n=0}^{\infty }\left( x_{0}t\right) ^{n}.$
The terms of this series behave like $\left( x_{0}t\right) ^{n}$ as $%
n\rightarrow \infty $. Notice that the blow-up time for the solution $T=%
\frac{1}{x_{0}}$ is visible in the asymptotic behaviour of the coefficients
of the series, which might be written as $\left( \frac{t}{T}\right) ^{n}$ as 
$n\rightarrow \infty .$ One might expect the singularity of the solutions of
the WT equation to become visible in the behavior of the coefficients of the
series that gives the solution of the kinetic equations. Moreover, the
possible regularizing effects, analogous to the onset of correlations that
we described above, should appear in some of the terms of the elements of
the Duhamel series that dissappear as $\varepsilon \rightarrow 0.$
\end{remark}
 \subsection{ Equivalent Gross-Pitaevski equation for the random field $u$}
 Suppose that $u $ is the solution of the initial value problem (\ref{S1E1}), (\ref{S1E2}), with initial data $u_0$ satisfying (\ref{S1E3}), as described in Section \ref{Correlations} and let us  perform the change of time variable given in 
\begin{eqnarray}
\label{S6E10B}
t =\varepsilon ^{\frac{2}{1+2\beta }-2}\bar{\sigma}.
\end{eqnarray} 
The change (\ref{S6E10B}) comes combining  first  the change $\tau =\varepsilon ^2t$ and then the change of time variable in (\ref{newTime}).
 As it was seen in the previous Section, the time variable $\overline{\sigma }$ is the time scale where correlations start to form if $|\overline \sigma |$ becomes of order one. With some abuse of notation we still denote the function $u$ in the new time variable as $u(\overline{\sigma }, x)$. Then, as it is assumed all along this article,
\[
\mathbb{E}\left[ u(x, \overline{\sigma }) \right] =0\ \ ,\ \ \mathbb{E}\left[
u ^{\ast }\left(x, \overline{\sigma }\right) u\left(y, \overline{\sigma }\right) \right] =N\left(
\overline{\sigma }, x-y\right) \ \ ,\ \ x,y\in \mathbb{R}^{3}.  \label{S1E30}
\]
Suppose  that, the Fourier transform, as defined in (\ref{S1E10}),  of $N(t)$ is such that
\begin{eqnarray*}
\widehat{N(\overline{\sigma })}(k, p)&\equiv&\frac{1}{\left( 2\pi \right) ^{\frac{3}{2}\times 2}}\int_{  \mathbb{R}^{3} }dx \int_{  
\mathbb{R}^{3}  }dy \,e^{  -i\left(k x-py\right) }N(x-y, \overline{\sigma })\\
 &\sim&\delta (k-p)f(p, \overline{\sigma }),\,\,\overline \sigma \to -\infty.
\end{eqnarray*}
where moreover
\[
f(p, \overline{\sigma })=\frac {1} {(-\overline{\sigma })^{2\beta +1}}\Phi (\xi ),\,\,\xi =\frac {p} {(-\overline{\sigma })^\beta }.
\]
Then, using  the inverse Fourier transform,
\begin{eqnarray*}
N(x-y, \overline{\sigma })&=&\frac{1}{\left( 2\pi \right) ^{\frac{3}{2}\times 2}}\int_{  \mathbb{R}^{3} }dk \int_{  
\mathbb{R}^{3}  }dp \,e^{i\left(k x-py\right) } \delta (k-p)\frac {1} {(-\overline{\sigma })^{2\beta +1}}\Phi \left( \frac {p} {(-\overline{\sigma })^\beta }\right)\\
&\sim&\frac{1}{\left( 2\pi \right) ^{\frac{3}{2}}}\int_{  \mathbb{R}^{3} }dp e^{i p (x-y ) }\frac {1} {(-\overline{\sigma })^{2\beta +1}}\Phi \left( \frac {p} {(-\overline{\sigma })^\beta }\right),\,\,\overline \sigma \to -\infty\\
\end{eqnarray*}
where a  change of variables in the integral gives
\begin{eqnarray*}
&&\frac{1}{\left( 2\pi \right) ^{\frac{3}{2}}}\int_{  \mathbb{R}^{3} }dp e^{i p (x-y ) }\frac {1} {(-\overline{\sigma })^{2\beta +1}}\Phi \left( \frac {p} {(-\overline{\sigma })^\beta }\right)=\\
&&=\frac {1} {(-\overline{\sigma })^{2\beta +1-3\beta }}\frac{1}{\left( 2\pi \right) ^{\frac{3}{2}}}\int_{  \mathbb{R}^{3} }d\xi  e^{ i(-\overline{\sigma })^\beta \xi } \Phi (\xi )
=(-\overline{\sigma })^{\beta -1}\Psi ((-\overline{\sigma })^{\beta }(x-y))
\end{eqnarray*}
where $\Psi $ is the inverse Fourier transform of $\Phi $. Then, rescaling  the  original variables $x, y$  as
\begin{eqnarray*}
(-\overline{\sigma })^\beta x=z,\,\,(-\overline{\sigma })^\beta y=w,\,\,u(\overline{\sigma }, x)=(-\overline{\sigma })^{\frac {\beta -1} {2}}U(z, \overline{\sigma })
\end{eqnarray*}
we would have,
\[
\mathbb E\left[u^*(x, t\overline{\sigma }), u(y, \overline{\sigma })) \right]\sim (-\overline{\sigma })^{\beta -1}\Psi ((-\overline{\sigma })^{\beta }(x-y))=(-\overline{\sigma })^{\beta -1}\Psi (z-w),\,\overline{\sigma }\to -\infty\\
\]
and $U$ is then  a Gaussian variable, depending on the variables $t\in \mathbb{R}$ and  $z\in  \mathbb{R}^{3} $ such that
\begin{equation}
\label{equationU}
(-\overline{\sigma })^{\beta -1 }\mathbb E\left[U^*(z, \overline{\sigma }), U(w, \overline{\sigma }) \right]\sim \Psi (z-w),\,\,\overline{\sigma }\to -\infty.
\end{equation}

Plugging  $(-\overline{\sigma })^{\frac {\beta -1} {2}}U((-\overline{\sigma })^\beta x, t)$ in equation (\ref{S1E1}),
\begin{eqnarray*}
i(-\overline{\sigma })^{\frac {\beta -1} {2}}U_t(z, \overline{\sigma })-\frac {i (\beta -1)} {2}(-t)^{\frac {\beta -1} {2}-1}U(z, \overline{\sigma })-i\beta (-\overline{\sigma })^{\frac {\beta -1} {2}-1}z\cdot \nabla_z U(z, \overline{\sigma })=\\
-\frac {1} {2}(-\overline{\sigma })^{\frac {\beta -1} {2}+2\beta }\Delta_zU(z, \overline{\sigma })+ (-\overline{\sigma })^{\frac {3} {2}(\beta -1)}|U(z, \overline{\sigma })|^2U(z, \overline{\sigma })
\end{eqnarray*}
and after multiplication  by $(-\overline{\sigma })^{-\frac {\beta -1} {2}-2\beta }$,
\begin{eqnarray*}
i(-\overline{\sigma })^{-2\beta }U_t(z, \overline{\sigma })-i\left(\frac {(\beta -1)} {2}U(z, \overline{\sigma }) +\beta z\cdot \nabla_z U(z, \overline{\sigma })\right)(-\overline{\sigma })^{-(2\beta +1)}=\\
-\frac {1} {2} \Delta_zU(z, \overline{\sigma })+ (-\overline{\sigma })^{-(\beta +1)}|U(z, \overline{\sigma })|^2U(z, \overline{\sigma }).
\end{eqnarray*}
The change of time variable
$$
\frac {d\overline{\sigma } } {d\overline{\tau  } }=(-\overline{\sigma })^{-2\beta },\,\,V(z, \overline{\tau })=U(z, \overline{\sigma })
$$
then $\overline{\sigma }=-(-(2\beta +1)\overline{\tau  } )^{\frac {1} {2\beta +1}}$, 
makes $V _{ \overline{\tau  }  } = (-\overline{\sigma   } )^{-2\beta }U_t(z, \overline{\sigma   } )$ and,
\begin{eqnarray}
&&iV_\tau (z, \overline{\tau  }  )-i\left(\frac {(\beta -1)} {2} V(z, \overline{\tau  } )+\beta z\cdot \nabla_z V(z, \overline{\tau  } )\right)(-(2\beta +1)\overline{\tau  }  )^{- 1}=\nonumber\\
&&\hskip 3cm -\frac {1} {2} \Delta_zV(z, \overline{\tau  } )+ (-(2\beta +1)\overline{\tau  }  )^{- \frac {\beta +1} {2\beta +1}}|V(z, \overline{\tau  } )|^2V(z, \overline{\tau  } ).\label{equationV}
\label{S6E2}
\end{eqnarray}
with
\begin{eqnarray}
&&(-(2\beta +1)\overline{\tau  } )^{\frac {\beta -1} {2\beta +1}}\mathbb E\left[V^*(z, \overline{\tau  }), V(w, \overline{\tau  }) \right]\sim \Psi (z-w),\,\,\overline{\tau  }\to -\infty.
\end{eqnarray}
We see that as $\overline \tau \to -\infty$, the leading term of  equation (\ref{S6E2}) is the linear Schr\"odinger equation as it may be expected,  since in the kinetic regime the solution of the Schr\"odinger equation solves  the linear Schr\"odinger equation to leading order, i.e. 
$iV_\tau (z, \overline{\tau  } )=-\frac {1} {2} \Delta_zV(z, \overline{\tau  } )$.

\section{Non-Markovian problem. Leading order as $\bar{\protect%
\sigma}\rightarrow -\infty .$}
\setcounter{equation}{0}
\setcounter{theorem}{0}

In this Section we discuss an equation that appears in the deduction of the kinetic equation starting from the Schr\"odinger equation, The equation   is non-Markovian and it contains some of the basic ingredients
that yield in the limit the Markovian WT kinetic equation. We discuss it here since it has an independent interest (it appears for example in the  theory of wave  turbulence for  interactions of ocean waves and wind c.f. \cite{J}).  The non Markovian equation (\ref {S5E2}) in its simplest formulation is the following one
\begin{eqnarray*}
\partial _{t}f_{1}\left( k_{1},t\right) &=&\frac{1}{\pi }\int_{\left( 
\mathbb{R}^{3}\right) ^{3}}d\eta _{2}d\eta _{3}d\eta _{4}\delta \left( \eta
_{3}+\eta _{4}-k_{1}-\eta _{2}\right) \cdot \\
&&\int_{-\infty }^{t}ds\cos \left(\left( t-s\right) \left( \left\vert
k_{1}\right\vert ^{2}+\left\vert \eta _{2}\right\vert ^{2}-\left\vert \eta
_{3}\right\vert ^{2}-\left\vert \eta _{4}\right\vert ^{2}\right)\right) \mathbb{K}%
\left[ f\right] \left( k_{1},\eta _{2};\eta _{3},\eta _{4},;s\right)
\end{eqnarray*}%
where 
\begin{eqnarray*}
&&\mathbb{K}\left[ f\right] \left( k_{1},\eta _{2};\eta _{3},\eta
_{4}; s\right) =\Big[ \left( f\left( k_{1},s\right) +f\left( k_{2},s\right) \right)
f\left( \xi _{1},s\right) f\left( \xi _{2},s\right)-\nonumber \\ 
&&\hskip 3.7cm -f\left( k_{1},s\right)
f\left( k_{2},s\right) \left( f\left( \xi _{2},s\right) +f\left( \xi
_{1},s\right) \right) \Big] \delta \left( \xi _{1}+\xi
_{2}-k_{1}-k_{2}\right)
\end{eqnarray*}
Notice that we have  changed the  variables $n$ to $f$ in formula (\ref {S5E2}). After the change of variables $\tau =t\varepsilon ^2$ and letting $\varepsilon \to 0$ we formally deduce the Markovian equation (\ref {S5E4}), replacing $n$ by $f$.\\ \\ \\

\textbf{Acknowledgements} JJLV gratefully acknowledges the support by the Deutsche Forschungsgemeinschaft (DFG) through the collaborative research centre "Analysis of criticality: from complex phenomena to models and estimates" (CRC 1720, Project-ID 539309657).  JJLV is funded also  by the DFG under Germany’s Excellence Strategy
EXC2047/2-390685813. \\
The research of ME is supported by grant PID2023-146872OB-I00 of MINECO. \\
The funders had no role in study design, analysis, decision to publish, or preparation of the manuscript.\\ \\
\textbf{Data availability statement.} 
Data sharing not applicable to this article as no data sets were generated or analysed during the
currents tudy.\\ \\
\textbf{Conflicts of Interest.}
The authors declare no conflicts of interest.

M. Escobedo: Departamento de Matemáticas, Universidad del País Vasco. Bilbao, Apartado 644, 48080, Spain. Email: miguel.escobedo@ehu.es

J. L. L. Velázquez: Institute of Applied Mathematics, University of Bonn,
Endenicher Allee 60, 53115 Bonn, Germany. Email:  jlopezve@uni-bonn.de

\end{document}